\def\E{\ifmmode{\mathbb E}\else{$\mathbb E$}\fi} 
\def\N{\ifmmode{\mathbb N}\else{$\mathbb N$}\fi} 
\def\R{\ifmmode{\mathbb R}\else{$\mathbb R$}\fi} 
\def\Q{\ifmmode{\mathbb Q}\else{$\mathbb Q$}\fi} 
\def\C{\ifmmode{\mathbb C}\else{$\mathbb C$}\fi} 
\def\H{\ifmmode{\mathbb H}\else{$\mathbb H$}\fi} 
\def\Z{\ifmmode{\mathbb Z}\else{$\mathbb Z$}\fi} 
\def\P{\ifmmode{\mathbb P}\else{$\mathbb P$}\fi} 
\def\T{\ifmmode{\mathbb T}\else{$\mathbb T$}\fi} 
\def\SS{\ifmmode{\mathbb S}\else{$\mathbb S$}\fi} 
\def\DD{\ifmmode{\mathbb D}\else{$\mathbb D$}\fi} 
\newcommand{\del}{\partial}
\newcommand{\Cont}{{\operatorname{Cont}}}
\newcommand{\ben}{\begin{enumerate}}
\newcommand{\een}{\end{enumerate}}
\newcommand{\be}{\begin{equation}}
\newcommand{\ee}{\end{equation}}
\newcommand{\bea}{\begin{eqnarray}}
\newcommand{\eea}{\end{eqnarray}}
\newcommand{\beastar}{\begin{eqnarray*}}
\newcommand{\eeastar}{\end{eqnarray*}}
\newcommand{\bc}{\begin{center}}
\newcommand{\ec}{\end{center}}
\newcommand{\norm}[2]{{ \ensuremath{\|} #1 \ensuremath{\|}}_{#2}}
\theoremstyle{theorem}
\newtheorem{thm}{Theorem}[section]
\newtheorem{cor}[thm]{Corollary}
\newtheorem{lem}[thm]{Lemma}
\newtheorem{prop}[thm]{Proposition}
\theoremstyle{definition}
\newtheorem{defn}[thm]{Definition}
\newtheorem{rem}[thm]{Remark}
\newtheorem{ques}[thm]{Question}
\newtheorem{hypo}[thm]{Hypothesis}
\newtheorem*{thm*}{Theorem}
\numberwithin{equation}{section}
\def\R{{\mathbb R}}
\def\E{{\mathbb E}}
\def\Z{{\mathbb Z}}
\def\C{{\mathbb C}}
\def\R{{\mathbb R}}
\def\P{{\mathbb P}}
\def\N{{\mathbb N}}
\def\11{{\mathbb I}}
\def\delbar{{\bar \partial}}
\def\C{\mathbb{C}}
\def\Z{\mathbb{Z}}
\def\T{\mathbb{T}}
\def\Q{\mathbb{Q}}
\def\E{\ifmmode{\mathbb E}\else{$\mathbb E$}\fi} 
\def\N{\ifmmode{\mathbb N}\else{$\mathbb N$}\fi} 
\def\R{\ifmmode{\mathbb R}\else{$\mathbb R$}\fi} 
\def\Q{\ifmmode{\mathbb Q}\else{$\mathbb Q$}\fi} 
\def\C{\ifmmode{\mathbb C}\else{$\mathbb C$}\fi} 
\def\H{\ifmmode{\mathbb H}\else{$\mathbb H$}\fi} 
\def\Z{\ifmmode{\mathbb Z}\else{$\mathbb Z$}\fi} 
\def\P{\ifmmode{\mathbb P}\else{$\mathbb P$}\fi} 
\def\SS{\ifmmode{\mathbb S}\else{$\mathbb S$}\fi} 
\def\DD{\ifmmode{\mathbb D}\else{$\mathbb D$}\fi} 
\def\R{{\mathbb R}}
\def\E{{\mathbb E}}
\def\Z{{\mathbb Z}}
\def\C{{\mathbb C}}
\def\R{{\mathbb R}}
\def\N{{\mathbb N}}
\def\MM{{\mathcal M}}
\def\JJ{{\mathcal J}}
\def\delbar{{\overline \partial}}
\def\CC{{\mathcal C}}
\def\CD{{\mathcal D}}
\def\CF{{\mathcal F}}
\def\CH{{\mathcal H}}
\def\CJ{{\mathcal J}}
\def\CL{{\mathcal L}}
\def\CM{{\mathcal M}}
\def\CS{{\mathcal S}}
\def\CW{{\mathcal W}}
\def\darr#1{\raise1.5ex\hbox{$\leftrightarrow$}
\mkern-16.5mu #1}
\def\roughly#1{\raise.3ex\hbox{$#1$\kern-.75em
\lower1ex\hbox{$\sim$}}}
\def\opname#1{\mathop{\kern0pt{\rm #1}}\nolimits}
\def\Re{\opname{Re}}
\def\Im{\opname{Im}}
\def\End{\opname{End}}
\def\dim{\opname{dim}}
\def\Area{\opname{Area}}
\def\genus{\opname{genus}}
\def\supp{\operatorname{supp}}
\def\End{\operatorname{End}}
\def\span{\operatorname{span}}
\begin{document}
\quad \vskip1.375truein

\def\mq{\mathfrak{q}}
\def\mp{\mathfrak{p}}
\def\mH{\mathfrak{H}}
\def\mh{\mathfrak{h}}
\def\ma{\mathfrak{a}}
\def\ms{\mathfrak{s}}
\def\mm{\mathfrak{m}}
\def\mn{\mathfrak{n}}
\def\mz{\mathfrak{z}}
\def\mw{\mathfrak{w}}
\def\Hoch{{\tt Hoch}}
\def\mt{\mathfrak{t}}
\def\ml{\mathfrak{l}}
\def\mT{\mathfrak{T}}
\def\mL{\mathfrak{L}}
\def\mg{\mathfrak{g}}
\def\md{\mathfrak{d}}
\def\mr{\mathfrak{r}}

\title[Contact Cauchy-Riemann maps III]{Analysis of contact Cauchy-Riemann maps III:
energy, bubbling and Fredholm theory}

\author{Yong-Geun Oh}
\address{Center for Geometry and Physics, Institute for Basic Science (IBS),
77 Cheongam-ro, Nam-gu, Pohang-si, Gyeongsangbuk-do, Korea 790-784
\& POSTECH, Gyeongsangbuk-do, Korea}
\email{yongoh1@postech.ac.kr}
\thanks{This work is supported by the IBS project \# IBS-R003-D1}

\date{}

\begin{abstract}  In \cite{oh-wang2,oh-wang3}, the authors studied the nonlinear elliptic system
$$
\delbar^\pi w = 0, \, d(w^*\lambda \circ j) = 0
$$
\emph{without involving symplectization}
for each given contact triad $(Q,\lambda, J)$, and established the a priori
 $W^{k,2}$ elliptic estimates and proved
the asymptotic (subsequence) convergence of the map $w: \dot \Sigma \to Q$
for any solution, called a contact instanton, on $\dot \Sigma$ under the hypothesis
$\|w^*\lambda\|_{C^0} < \infty$ and $d^\pi w \in L^2 \cap L^4$. The asymptotic limit of
a contact instanton is a `spiraling' instanton along a `rotating' Reeb orbit
near each puncture on a punctured Riemann surface $\dot \Sigma$. Each limiting Reeb orbit
carries a `charge' arising from the integral of $w^*\lambda \circ j$.

In this article, we further develop analysis of contact instantons, especially
the $W^{1,p}$ estimate for $p > 2$ (or the $C^1$-estimate), which is essential
for the study of compactfication of the moduli space
and the relevant Fredholm theory for contact instantons.
In particular, we define a Hofer-type off-shell energy $E^\lambda(j,w)$ for any pair $(j,w)$
with a smooth map $w$ satisfying  $d(w^*\lambda \circ j) = 0$, and develop the bubbling-off analysis
and prove an $\epsilon$-regularity result. We also develop the relevant Fredholm theory and
carry out index calculations (for the case of vanishing charge).
\end{abstract}

\subjclass[2010]{Primary 53D42}

\keywords{Contact manifolds, contact instanton (action, charge and potential),
asymptotic Hick's field, Hofer-type energy, bubbling-off analysis, $\epsilon$-regularity theorem,
Fredholm theory}

\maketitle

\tableofcontents

\section{Introduction and statements of main results}
\label{sec:intro}

A contact manifold $(Q,\xi)$ is a $2n+1$ dimensional manifold
equipped with a completely non-integrable distribution of rank $2n$,
called a contact structure. Complete non-integrability of $\xi$
can be expressed by the non-vanishing property
$$
\lambda \wedge (d\lambda)^n \neq 0
$$
for a one-form $\lambda$ which
defines the distribution, i.e., $\ker \lambda = \xi$. Such a
one-form $\lambda$ is called a contact form associated to $\xi$.
Each contact form $\lambda$ of $\xi$ canonically induces a splitting
$$
TQ = \R\{X_\lambda\} \oplus \xi.
$$
Here $X_\lambda$ is the Reeb vector field of $\lambda$,
which is uniquely determined by the equations
$$
X_\lambda \rfloor \lambda \equiv 1, \quad X_\lambda \rfloor d\lambda \equiv 0.
$$
We denote by $\Pi=\Pi_\lambda: TQ \to TQ$ the idempotent, i.e., an endomorphism satisfying
$\Pi^2 = \Pi$ such that $\ker \Pi = \R\{X_\lambda\}$ and $\operatorname{Im} \Pi = \xi$.
Denote by $\pi=\pi_\lambda: TQ \to \xi$ the associated projection.

In the presence of the contact form $\lambda$,
one usually consider the set of $J$ that is compatible to $d\lambda$ in
the sense that the bilinear form $g_\xi = d\lambda(\cdot, J\cdot)$ defines
a Hermitian vector bundle $(\xi, d\lambda|_\xi,J|_\xi)$ on $Q$. We call such $J$ a
$CR$-almost complex structure.
As long as no confusion arises, we do not distinguish $J$ and its restriction $J|_\xi$.
We introduce the projection $\pi: TQ \to \xi$ with respect to the
splitting $TQ = \R\{X_\lambda\} \oplus \xi$.

\begin{defn} Let $J \in \End(TQ)$ be an endomorphism satisfying $J^2 = - \Pi$ such that
$d\lambda(\cdot,J\cdot)$ is nondegenerate on $\xi$. We say that such $J$ is \emph{compatible to} $\lambda$.
 We define the set
\be\label{eq:JJQlambda}
\CJ(Q,\lambda) = \{J :\xi \to \xi \mid J^2 = - \Pi, \, J \, \text{ compatible to } \, \lambda\}
\ee
\end{defn}
Following \cite{oh-wang1}, we call any such triple $(Q,\lambda,J)$ a contact triad of $(Q,\xi)$.
For each given contact triad, we equip $Q$ with the triad metric
$$
g = d\lambda(\cdot, J \cdot) + \lambda \otimes \lambda.
$$

Let $(\Sigma, j)$ be a Riemann surface with a finite number of marked points and
let $\dot \Sigma$ be the associated punctured Riemann surface with
a finite number of punctures. We call a map $w:\dot \Sigma \to Q$ a
\emph{contact Cauchy-Riemann map} if $\delbar^\pi w = 0$.
Then we have the decomposition
$$
dw = d^\pi w + w^*\lambda\, X_\lambda, \quad  d^\pi w : = \delbar^\pi w + \del^\pi w
$$
as a one-form on $\Sigma$ with values in $TQ$. We also regard $d^\pi w$ as
a $\xi$-valued one-form on $\Sigma$.

We introduce a nonlinear first-order differential operator
\be\label{eq:dw-decompose}
\delbar^\pi w = \frac{1}{2} (\pi dw + J \cdot\pi dw \cdot j), \,
\del^\pi w= \frac{1}{2} (\pi dw - J \cdot\pi dw \cdot j)
\ee
and consider the following variation of Cauchy-Riemann equation
\be\label{eq:CRcontact}
\delbar^\pi w = 0.
\ee
\begin{defn}\label{defn:CRcontact}
We say a map $w: \Sigma \to Q$ is a contact Cauchy-Riemann map
(with respect to $J$) if it satisfies \eqref{eq:CRcontact}.
\end{defn}

In \cite{oh-wang2}, Wang and the present author established the a priori
$W^{k,2}$ coercive estimates for the contact Cauchy-Riemann maps by
augmenting the equation $\delbar^\pi w = 0$ by the closedness condition of
\be\label{eq:closed}
d(w^*\lambda \circ j) = 0.
\ee
The standard pseudoholomorphic curve equation
on the symplectization $Q \times \R$ equipped with the cylindrical
almost complex structure $J_0 \oplus J$ with respect to the splitting
$$
T(Q \times \R) =  \xi \oplus \R\cdot X_\lambda \oplus \R \cdot \frac{\del}{\del r}
$$
is a special case of the `exact' contact instantons where the anti-derivative
equation of $w^*\lambda\circ j$ prescribed by $a = w^*s$ with the
$s$-coordinate of the symplectization $Q \times \R$
for the map $(w,a): \dot \Sigma \to Q \times \R$. (See \cite{hofer} for the
relevant calculations.)

\begin{defn}[Contact instanton] Let $\Sigma$ be as above.
We call a pair of $(j,w)$ of a complex structure on $\Sigma$ and a map $w:\dot \Sigma \to Q$ a
\emph{contact instanton} if it satisfies
\be\label{eq:contacton}
\delbar^\pi w = 0, \, \quad d(w^*\lambda \circ j) = 0.
\ee
We call such $(j,w)$ an \emph{exact contact instanton} if the form $w^*\lambda \circ j$ is exact
on $\dot \Sigma$.
\end{defn}
Such an equation was first introduced by Hofer in \cite{hofer-survey} for the case of
charges vanishing at the punctures in the context of symplectization, which was further
studied in \cite{ACH}, \cite{bergmann} and \cite{abbas}. We will also put this charge vanishing
condition at the punctures for our study of the exponential convergence and of the Fredholm
theory at least in the present paper, without involving the symplectization.

To put the research performed in the present paper in perspective, we recall
the precise statement of the above mentioned a priori $W^{k,2}$ estimates established in
\cite{oh-wang2} on the punctured Riemann surface $\dot \Sigma$ here. Denote
$$
w^*\lambda = a^w_1\, d\tau + a^w_2\, dt.
$$

\begin{thm}[Theorem 1.9 \cite{oh-wang2}]\label{thm:Wk2} Let $(\dot\Sigma,j)$ and $w$
satisfying \eqref{eq:contact-instanton} on $\dot \Sigma$ as above.
If $|d^\pi w| \in L^2 \cap L^4$ and $\|w^*\lambda\|_{C^0}< \infty$ on $\dot \Sigma$, then
$$
\int_{\dot \Sigma} |(\nabla)^{k+1}(dw)|^2 \leq \int_{\dot \Sigma} J_{k}'(d^\pi w, w^*\lambda).
$$
Here $J'_{k+1}$ a polynomial function of the norms of the covariant derivatives
of $d^\pi w, \, w^*\lambda$ up to $0, \, \ldots, k$ with degree at most $2k + 4$
whose coefficients depend on
$$
\|K\|_{C^k}, \|R^\pi\|_{C^k}, \|\CL_{X_\lambda}J\|_{C^k}, \, \|w^*\lambda\|_{C^0}.
$$
\end{thm}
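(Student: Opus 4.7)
The plan is to prove the theorem by induction on $k$, running an elliptic bootstrap based on two Bochner--Weitzenb\"ock type identities derived from the contact instanton system $\delbar^\pi w = 0$, $d(w^*\lambda \circ j) = 0$. The point is to split the analysis of $dw$ into its horizontal part $d^\pi w$ and its Reeb part $w^*\lambda\, X_\lambda$, derive a parabolic-type inequality for each, and couple them through curvature terms.

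For the base case $k=0$, I would differentiate $\delbar^\pi w = 0$ once more and commute covariant derivatives on the Hermitian bundle $(\xi, d\lambda|_\xi, J)$ to obtain a pointwise inequality of Bochner type
\begin{equation*}
\tfrac12 \Delta |d^\pi w|^2 \;\geq\; |\nabla d^\pi w|^2 - P_0\bigl(|d^\pi w|, |w^*\lambda|\bigr),
\end{equation*}
where $P_0$ is polynomial of degree at most $4$ with coefficients depending on $\|K\|_{C^0}$, $\|R^\pi\|_{C^0}$ and $\|\CL_{X_\lambda}J\|_{C^0}$. The Reeb-direction Lie derivative enters because the splitting $TQ = \R\{X_\lambda\}\oplus \xi$ is not parallel, so every commutator of $\nabla$ with the projection $\pi$ or with $J$ produces an algebraic expression in $\CL_{X_\lambda}J$ and $d\lambda$. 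In parallel, from $d(w^*\lambda\circ j)=0$ combined with $d(w^*\lambda)=w^*d\lambda$ and the compatibility $d\lambda|_\xi = g_\xi(\cdot,J\cdot)$, one sees that the Hodge Laplacian of the one-form $w^*\lambda$ is algebraically controlled by $|d^\pi w|^2$. Integrating both identities over a compact exhaustion of $\dot\Sigma$ against a cutoff, the hypotheses $d^\pi w\in L^2\cap L^4$ and $\|w^*\lambda\|_{C^0}<\infty$ are exactly what is needed to absorb the quartic nonlinearities on the right and to force the boundary contributions to vanish in the limit, giving the base estimate with the claimed degree-$4$ remainder.

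For the inductive step, I would apply $\nabla^k$ to both equations of the first-order system to obtain
\begin{equation*}
\delbar\bigl(\nabla^k d^\pi w\bigr) = R_k^{(1)}, \qquad d\!\ast\!\bigl(\nabla^k w^*\lambda\bigr) = R_k^{(2)},
\end{equation*}
where $R_k^{(1)}, R_k^{(2)}$ are polynomial in $\{\nabla^\ell d^\pi w, \nabla^\ell w^*\lambda\}_{\ell\leq k}$ with coefficients polynomial in the $C^k$-norms of $K, R^\pi, \CL_{X_\lambda}J$. Applying the same Bochner-type argument to $\nabla^{k+1}d^\pi w$ and $\nabla^{k+1} w^*\lambda$, integrating by parts, and feeding in the inductive hypothesis to control lower-order terms closes the bootstrap; a straightforward degree count shows that each extra derivative contributes at most $2$ to the polynomial degree of the remainder, producing the stated bound $2k+4$.

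The main obstacle is the careful bookkeeping of all commutator and curvature terms produced by the non-parallelism of the splitting $TQ=\R\{X_\lambda\}\oplus\xi$ and the fact that $\nabla J$ and $\nabla \pi$ do not vanish: every derivative hitting $J$ or $\pi$ generates terms in $\CL_{X_\lambda}J$ and $d\lambda$, every pair of interchanged covariant derivatives generates terms in $K$ or $R^\pi$, and one must arrange these so that only the norms listed in the theorem enter, and only up to the stated order $k$. A secondary technical point is to organize the cutoff integration-by-parts scheme on the non-compact punctured surface $\dot\Sigma$ so that the boundary contributions genuinely die under the $L^2\cap L^4$ and $C^0$ hypotheses; this is where the assumption $d^\pi w\in L^2\cap L^4$ is used in an essential way, as $L^4$ is precisely the integrability required to absorb the quartic terms $|d^\pi w|^4$ appearing on the right-hand side of the Bochner inequality.
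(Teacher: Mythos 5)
This paper does not actually prove Theorem \ref{thm:Wk2}; it is quoted verbatim from \cite{oh-wang2} with no proof given here, so there is no ``paper's own proof'' in the document to check you against. That said, the global shape of your argument---a coupled Bochner-type bootstrap in which the two equations $\delbar^\pi w=0$ and $d(w^*\lambda\circ j)=0$ are turned into second-order inequalities for $d^\pi w$ and $w^*\lambda$, organized via the contact triad connection so that commutators produce $R^\pi$, $K$, and $\CL_{X_\lambda}J$-terms---is indeed the scheme used in \cite{oh-wang2}, and your degree count $4+2k$ is consistent with the statement.

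One concrete place your sketch says something false, and where the actual mechanism is both simpler and more interesting than you suggest, is the control of the Reeb part. You write that ``the Hodge Laplacian of the one-form $w^*\lambda$ is algebraically controlled by $|d^\pi w|^2$.'' It is not: on shell one has $\delta(w^*\lambda)=0$ from $d(w^*\lambda\circ j)=0$ and $d(w^*\lambda)=w^*d\lambda=\tfrac12|d^\pi w|^2\,dA$, so $\Delta_{\mathrm{Hodge}}(w^*\lambda)=\delta d(w^*\lambda)=\delta\bigl(\tfrac12|d^\pi w|^2\,dA\bigr)$ involves a \emph{derivative} of $|d^\pi w|^2$, not an algebraic expression in it. The identity that actually does the work at order zero is the on-shell formula $|\nabla w^*\lambda|^2=|dw^*\lambda|^2+|\delta w^*\lambda|^2=|w^*d\lambda|^2=\tfrac14|d^\pi w|^4$ (Proposition \ref{prop:energy-omegaarea}(4)): the first covariant derivative of the Reeb component is already \emph{quartic} in $d^\pi w$. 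This is precisely why the $L^4$ hypothesis on $d^\pi w$ enters---not merely to ``absorb quartic nonlinearities'' coming from curvature commutators, but to make $\nabla(w^*\lambda\,X_\lambda)$ lie in $L^2$ in the first place. You should also name the contact triad connection explicitly: the clean coefficients you list (only $\|K\|_{C^k}$, $\|R^\pi\|_{C^k}$, $\|\CL_{X_\lambda}J\|_{C^k}$, $\|w^*\lambda\|_{C^0}$) come precisely from the triad connection identities such as $\nabla X_\lambda=\tfrac12(\CL_{X_\lambda}J)J$; an arbitrary Hermitian connection on $\xi$ would not yield that list.
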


One novel feature of this estimate is its explicit reliance on the $C^0$ bound of $w^*\lambda$
which concerns the $X_\lambda$ component of $dw$. Therefore the remaining task is to complete
the a priori estimates to study compactness properties of the moduli space of contact
instantons is to further analyze how to control the quantities
$$
\|w^*\lambda\|_{C^0}, \quad \|d^\pi w\|_{L^4}.
$$
\subsection{Bubbling-off analysis and $\epsilon$-regularity theorem}
\label{subsec:bubbling}

One of the main purposes of the present article is to establish
the two crucial analytical components in the construction of cementification of
the moduli space of solutions of the contact instantons, one the $\epsilon$-regularity theorem
and the other the bubbling-off analysis.

To state the $\epsilon$-regularity statement relevant to contact instantons, we
recall the following standard quantity in contact geometry

\begin{defn} Let $\lambda$ be a contact form of contact manifold $(Q,\xi)$.
Denote by $\frak R eeb(Q,\lambda)$ the set of closed Reeb orbits.
We define $\operatorname{Spec}(Q,\lambda)$ to be the set
$$
\operatorname{Spec}(Q,\lambda) = \left\{\int_\gamma \lambda \mid \lambda \in \frak Reeb(Q,\lambda)\right\}
$$
and call the \emph{action spectrum} of $(Q,\lambda)$. We denote
$$
T_\lambda: = \inf\left\{\int_\gamma \lambda \mid \lambda \in \frak Reeb(Q,\lambda)\right\}.
$$
\end{defn}

We set $T_\lambda = \infty$ if there is no closed Reeb orbit. This set a priori could be empty.
The Weinstein conjecture is equivalent to the statement that this set is non-empty on any compact
contact manifold. A standard lemma in
contact geometry says that $T_\lambda > 0$.
This constant $T_\lambda$ enters in a crucial way in the following
$\epsilon$-regularity type statement. In addition, we also need a Hofer-type energy, denoted by
$E^\lambda(w)$ whose precise definition we refer readers to section \ref{sec:offshellenergy}.

\begin{thm}\label{thm:period-gap}
Denote by $D$ the closed disc of positive radius.
Suppose that $w:D\to Q$ satisfies $\delbar^\pi w = 0, \, d(w^*\lambda\circ j) = 0$ with $E^\lambda(w):=K_0 < \infty$.
Then for any $\epsilon > 0$
and another smaller disc $D' \subset \overline D' \subset D$,
there exists some $K_1 = K_1(p,D', \epsilon,K_0) > 0$ such that for any contact instanton
with $E^\pi(w) < T_\lambda- \epsilon$
\be\label{eq:W2p}
\|dw\|_{1,p;D'} \leq K_1
\ee
where $K_1$ depends only on $p$, $\epsilon$, and $D' \subset D$ and $K_0 = E^\lambda(w)$.
\end{thm}

The proof of this theorem follows the scheme of the corresponding result
in the study of pseudoholomorphic curves given by the author in \cite{oh:removal}. This proof uses
the Sacks-Uhlenbeck's bubbling-off argument which essentially uses the a priori
coercive $W^{k,p}$ elliptic estimates and conformal
invariance of harmonic energy. In the current case of contact instanton maps,
the relevant coercive estimate was established in \cite{oh-wang2}. On the other hand
the harmonic energy is quite irrelevant but the $\pi$-harmonic energy $E^\pi(w)$ is.
However the $\pi$-harmonic energy does not have much control of the derivative
$dw$ in the Reeb direction. In the case of symplectization, Hofer \cite{hofer,behwz} introduced
the so called $\lambda$-energy for the map $u = (w,a): \dot \Sigma \to Q \times \R$
for this purpose. His definition of the
latter energy strongly relies on the coordinate function $a = r\circ w$ which
exists only under the assumption the form $w^*\lambda \circ j$ is exact.
For the non-exact case, we have to devise a different way of defining
Hofer-type $\lambda$-energy. For this purpose, we introduce the notion of \emph{contact instanton potential}
whose definition relies on Zwiebach's representation of conformal structure $j$ on the surface $\dot \Sigma$
by the minimal area metrics \cite{zwiebach,wolf-zwieb}. See section \ref{sec:offshellenergy}
for the details. In the end, our definition of Hofer-type energy strongly depends on the complex
structure $j$ and so had better be regarded as a function for the pair $(j,w)$ not just for $w$.

\subsection{Asymptotic behavior of contact instantons}

We also carry out the
asymptotic study of contact instantons near the punctures.
For this study of asymptotic convergence result at the punctures and the relevant index
theory, it turns out to be useful to regard \eqref{eq:contacton} as a version of
gauged sigma model with abelian Hick's field. It is also important to employ the notion of
asymptotic contact instantons at each puncture,
which is a massless instantons on $\R\times S^1$
canonically associated to any finite energy contact instantons.
It also gives rise to an asymptotic Hick's field, which is a holomorphic one-form
that appears as the asymptotic limit of the complex-valued $(1,0)$-form
$$
w^*\lambda \circ j + \sqrt{-1} w^*\lambda.
$$
The following asymptotic invariants seem to be also useful to introduce in relation to the
precise study of asymptotic behavior of contact instantons near punctures.

\begin{defn}[Asymptotic Hick's charge] Let $(\Sigma, j)$ be a closed Riemann surface
and $\dot \Sigma$ its associated punctured Riemann surface with finite energy with bounded
gradient. Let $p$ be a given puncture of $\dot \Sigma$.
We define the \emph{asymptotic Hick's charge} of the instanton
$w: \dot \Sigma \to Q$ to be the complex number
$$
Q(p) + \sqrt{-1} T(p)
$$
defined by
\bea\label{eq:asymp-charge}
Q(p) & = & -\int_{S^1} \Re \chi(0,t)\, dt = - \int_{\del_{\infty;p}\overline \Sigma}w^*\lambda\circ j \\
T(p) & = & \int_{S^1} \Im \chi(0,t)\, dt = \int_{\del_{\infty;p}\overline \Sigma}w^*\lambda
\eea
where $z = e^{-2\pi(\tau + it)}$ is the analytic coordinates of $D_r(p)$ centered at $p$.
We call $Q(p)$ the \emph{contact instanton charge} of $w$ at $p$
hand $T(p)$ the \emph{contact instanton action} of $w$ at $p$.
\end{defn}
We define the asymptotic Hick's field (or charge) of a map $w: \C \to Q$ at infinity
by regarding $\infty$ as a puncture associated $\C \cong \C P^1 \setminus \{\infty\}$.

We next prove the following removable singularity result (see Theorem \ref{thm:c=0}).

\begin{thm} \label{thm:removable}
Suppose $Q(p) = 0 = T(p)$. Then $w$ is smooth across $p$ and so the
puncture $p$ is removable.
\end{thm}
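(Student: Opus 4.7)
The plan is to reduce to the classical removable singularity theorem for $J$-holomorphic maps by constructing an auxiliary pseudoholomorphic extension $u=(a,w)\colon D_r^*(p)\to\R\times Q$ into the symplectization. Work in cylindrical coordinates $(\tau,t)\in[\tau_0,\infty)\times S^1$ near $p$ coming from $z=e^{-2\pi(\tau+it)}$. The one-form $w^*\lambda\circ j$ is closed by the contact instanton equation \eqref{eq:closed}, and its period around the loop $\{\tau\}\times S^1$ is independent of $\tau$ and equals $-Q(p)=0$ by hypothesis. Hence $w^*\lambda\circ j$ is exact on the punctured neighborhood, and I would choose a smooth primitive $a\colon[\tau_0,\infty)\times S^1\to\R$ with $da=w^*\lambda\circ j$. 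A direct computation (see \cite{hofer}) then shows that the pair $u=(a,w)$ satisfies the full pseudoholomorphic curve equation $\delbar_{J_0\oplus J}u=0$ for the cylindrical almost complex structure on $\R\times Q$, since the relations $\delbar^\pi w=0$ and $da=w^*\lambda\circ j$ are exactly its splitting into $\xi$- and Reeb-and-$\R$-components.

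The core analytic step is to show that $a$ is bounded on the end. Writing $w^*\lambda=f\,d\tau+g\,dt$, the identity $da=w^*\lambda\circ j$ reads $\del a/\del\tau=g$, $\del a/\del t=-f$. The circle average $\bar a(\tau)=\int_{S^1}a(\tau,t)\,dt$ therefore satisfies
$$
\bar a'(\tau)=\int_{S^1}g(\tau,t)\,dt=\int_{\{\tau\}\times S^1}w^*\lambda\longrightarrow T(p)=0\qquad\text{as }\tau\to\infty.
$$
Combined with the exponential decay of $|dw|$ at a puncture with vanishing asymptotic Hick's charges---which follows from the asymptotic analysis of \cite{oh-wang2,oh-wang3} together with the coercive $W^{k,2}$-estimates of Theorem \ref{thm:Wk2}---the quantity $\bar a'(\tau)$ is integrable on $[\tau_0,\infty)$, so $\bar a(\tau)$ converges to a finite limit. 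The $t$-oscillation $a(\tau,t)-\bar a(\tau)$ is controlled by $\|f(\tau,\cdot)\|_{L^2(S^1)}$ via the Poincar\'e inequality on the circle and likewise decays, yielding $\|a\|_{C^0([\tau_0,\infty)\times S^1)}<\infty$.

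With $a$ bounded, the map $u=(a,w)\colon D_r^*(p)\to\R\times Q$ is pseudoholomorphic, has relatively compact image, and has finite harmonic energy
$$
\int|du|^2=2\Big(\int|d^\pi w|^2+\int|w^*\lambda|^2\Big)<\infty,
$$
the right-hand side being finite by Theorem \ref{thm:Wk2} and the finiteness of $E^\lambda(w)$. The classical Gromov removable singularity theorem for $J$-holomorphic maps into an almost Hermitian manifold then extends $u$ smoothly across $p$; the $Q$-component of this extension is $w$, which therefore extends smoothly across $p$, as claimed.

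The main obstacle is the $L^\infty$ bound for the potential $a$. It reduces to integrability---really, exponential decay---of the period map $\tau\mapsto\int_{\{\tau\}\times S^1}w^*\lambda$ at a puncture where both asymptotic charges vanish, i.e.\ where the limiting ``rotating Reeb orbit'' degenerates to a constant point. Once this exponential decay inherited from the asymptotic analysis of \cite{oh-wang2,oh-wang3} is in hand, the passage to the symplectization via the primitive $a$ and the appeal to Gromov's theorem are essentially routine.
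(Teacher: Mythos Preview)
Your route is genuinely different from the paper's. You lift to the symplectization by choosing a primitive $a$ of $w^*\lambda\circ j$ (legitimate since $Q(p)=0$ kills the period), and then aim to invoke Gromov's removable singularity theorem for the $\widetilde J$-holomorphic map $(a,w)$. The paper, by contrast, stays entirely on $Q$: it uses the $C^0$-bound $|dw|<\infty$ from Lemma \ref{lem:|dw|<infty}, the fact that the asymptotic massless instanton $w_\infty$ on $\R\times S^1$ is constant when $c=Q(p)+\sqrt{-1}\,T(p)=0$, and then bootstraps via the intrinsic $W^{k,2}$ a priori estimates of Theorem \ref{thm:Wk2} to conclude smoothness across $p$. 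The paper's argument is deliberately intrinsic---one of its themes is to avoid symplectization---and it does not need any exponential decay as input.

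Your argument has a genuine gap at the step you yourself flag as the ``main obstacle'': the $L^\infty$ bound on $a$. You reduce this to integrability of $\bar a'(\tau)=\int_{\{\tau\}\times S^1}w^*\lambda$, and then invoke ``exponential decay of $|dw|$\dots\ which follows from the asymptotic analysis of \cite{oh-wang2,oh-wang3}.'' But the exponential convergence theorems in \cite{oh-wang2,oh-wang3} are proved for contact instantons asymptotic to a \emph{nondegenerate or Morse--Bott closed Reeb orbit}; their mechanism is the spectral gap of the asymptotic operator $A_{(T,z)}$ of \eqref{eq:DUpsilon}. When $T(p)=0$ there is no asymptotic Reeb orbit---the limit is a point---and that framework does not apply as stated. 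In fact, exponential decay of $dw$ in cylindrical coordinates at a puncture with $T(p)=Q(p)=0$ is essentially equivalent to the removable singularity statement you are trying to prove, so invoking it here is circular. One way to repair your argument without circularity is to bypass the explicit decay estimate and appeal directly to Hofer's dichotomy \cite{hofer} for finite-energy $\widetilde J$-holomorphic maps on a punctured disc into $\R\times Q$: either $a$ is bounded (and the singularity is removable) or $w$ is asymptotic to a closed Reeb orbit of nonzero period. Since $T(p)=0$ excludes the latter, you get $a$ bounded for free. That makes your proof correct but entirely dependent on the symplectization analysis, which is precisely what the paper's intrinsic approach is designed to avoid.
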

This theorem will be of fundamental importance in that it enables us to
construct a good compactification of the moduli space of \emph{exact contact instantons}
without involving symplectization. This will be dealt in a sequel to this paper.

The theorem also allows us to make the following classification of the punctures.

\begin{defn}[Classification of punctures] Let $\dot \Sigma$ be a puncture Riemann
surface with punctures $\{p_1, \cdots, p_k\}$ and let
$w: \dot \Sigma \to Q$ be a contact instanton map.
\begin{enumerate}
\item We call a puncture $p$ \emph{removable} if $T(p) = Q(p) = 0$, and \emph{non-removable} otherwise.
Among the non-removable punctures $p$, we call it
\emph{non-adiabatic} if $T(p) \neq 0$, \emph{adiabatic} if $T(p) = 0$ but $Q(p) \neq 0$.
\item
We say a non-removable puncture \emph{positive} (resp. \emph{negative}) puncture if the function
$$
\int_{\del D_\delta(p)} w^*\lambda
$$
is increasing (resp. decreasing) as $\delta \to 0$.
\end{enumerate}
\end{defn}

The appearance of adiabatic punctures is a new phenomenon when the form $w^*\lambda \circ j$ is
not exact. In the exact case considered via the case of symplectization picture \cite{hofer,behwz},
the associated puncture with $T(p) = 0$ is removable and can be dropped in this classification by
filling in the puncture.

Unlike the exact case, the puncture cannot be removed in general
for the non-exact case, i.e., that of non-zero charge $Q(p) \neq 0$, even when $T(p) = 0$. Therefore this
new asymptotic behavior has to be included in the study of moduli space of contact instantons.
What happens at such a puncture is that the instanton $w$ spirals around a leaf of Reeb foliation
when the leaf is closed and chases along the leaf when it is not closed.

We would like to point out the similarity between the relationship of
the forms $w^*\lambda \circ j$ and $w^*\lambda$ for the contact instanton $w$
and the relationship between the electricity and magnetism in the electro-magnetic duality,
in that in both cases the first is associated to the closed one-form while the second is not.
The following  highlights the similarity between the two:
\bea\label{eq:correspondence}
\text{electricity} &\longleftrightarrow& \text{contact instanton charge field } w^*\lambda \circ j \nonumber \\
\text{electric potential} &\longleftrightarrow& \text{contact instanton potential }\, f \nonumber\\
\text{magnetism} &\longleftrightarrow& \text{contact instanton action field }\, w^*\lambda \nonumber\\
&{}&
\eea

\subsection{Triad connection, Fredholm theory and index calculations}

Next we establish the Fredholm theory and compute the index of the
linearization map and hence the virtual dimension of the relevant
moduli space of contact instantons. Establishing the Fredholm theory for the linearization map
$D\Upsilon(w)$ is rather non-trivial because the operator has different
orders depending on the direction of contact distribution $\xi$ or on
the Reeb direction $X_\lambda$ and mixes the directions of the two.
See Theorem \ref{thm:linearization-intro} below.
Our Fredholm theory and its index calculations strongly relies on our
precise calculation of the linearization map via the contact triad connection
introduced in \cite{oh-wang1}. We refer to section \ref{sec:linearization-map} for
the details of the computations.

We denote by $\Sigma$ either the closed Riemann surface or the punctured one.
Recalling the decomposition
$$
Y = Y^\pi + \lambda(Y)\, X_\lambda,
$$
we have the decomposition
$$
\Omega^0(w^*TQ) \cong \Omega^0(w^*\xi) \oplus \Omega^0(\Sigma,\R)\cdot X_\lambda.
$$
Here we use the splitting
$$
TQ =  \xi \oplus \span_\R\{X_\lambda\}
$$
where $\span_\R\{X_\lambda\}: = \CL$ is a trivial line bundle and so
$$
\Gamma(w^*\CL) \cong C^\infty(\Sigma).
$$
Define the map $\Upsilon(w) = (\delbar^\pi w, w^*\lambda\, X_\lambda)$.
From the expression of the map $\Upsilon = (\Upsilon_1,\Upsilon_2)$, the map defines a bounded linear map
\be\label{eq:dUpsilon-intro}
D\Upsilon(w): \Omega^0(w^*TQ) \to \Omega^{(0,1)}(w^*\xi) \oplus \Omega^2(\Sigma).
\ee
We choose $k \geq 2, \, p > 2$. We then establish the following formula

\begin{thm}[Theorem \ref{thm:linearization}] \label{thm:linearization-intro}
Decompose $D\Upsilon(w) = D\Upsilon_1(w) \oplus D\Upsilon_2(w)$
according to the codomain of \eqref{eq:dUpsilon-intro}. Then we have
\bea
D\Upsilon_1(w)(Y) & = & \delbar^{\nabla^\pi}Y^\pi + T_{dw}^{\pi,(0,1)}(Y^\pi) + B^{(0,1)}(Y^\pi)\nonumber \\
&{}& \quad + \frac{1}{2}\lambda(Y) (\CL_{X_\lambda}J)J(\del^\pi w)
\label{eq:DUpsilon1}\\
D\Upsilon_2(w)(Y) & = &  - \Delta (\lambda(Y))\, dA + d((Y^\pi \rfloor d\lambda) \circ j)
\label{eq:DUpsilon2}
\eea
where $T_{dw}^{\pi,(0,1)}$ and $B^{(0,1)}$ are the $(0,1)$-components of $T_{dw}^{\pi}$
and $B$ respectively where $B: \Omega^0(w^*TQ) \to \Omega^1(w^*\xi)$, $T_{dw}^\pi$ are
the zero-order differential operators given by
$$
B(Y) = - \frac{1}{2}  w^*\lambda \left((\CL_{X_\lambda}J)J Y\right)
$$
and
$$
T_{dw}^\pi(Y) = \pi T(Y,dw).
$$
\end{thm}

We denote by $\overline \Sigma$ the real blow-up of the punctured Riemann
surface $\dot \Sigma$ associated to the set of positive and negative punctures
$$
\{p_1, \cdots, p_{s^+}\}, \quad \{q_1, \cdots, q_{s^-}\}
$$
and denote by $\del_i^+\overline \Sigma$ and $\del_j^-\overline \Sigma$ the
associated boundary components. We also denote by $\gamma_i^+$ and $\gamma_j^-$
the given asymptotic Reeb orbits at the punctures.

We fix a trivialization
$\Phi: w^*\xi \to \overline \Sigma \times \R^{2n}$ and denote
by $\Psi_i^+$ (resp. $\Psi_j^-$) the induced symplectic paths associated to the trivializations
$\Phi_i^+$ (resp. $\Phi_j^-$) along the Reeb orbits $\gamma^+_i$ (resp. $\gamma^-_j$) at the punctures
$p_i$ (resp. $q_j$) respectively. Then we have the following index formula
for the case of vanishing charge.  We leave more accurate statements and proof to
section \ref{sec:fredholm}, and the case of non-exact contact instantons elsewhere.

\begin{thm}\label{thm:indexformula} Consider the map $\Upsilon$ defined by
$\Upsilon(w) = (\delbar^\pi w, d(w^*\lambda \circ j))$ on a puncture Riemann surface $\dot \Sigma$.
Let $w$ be an exact contact
instanton, i.e. a solution of $\Upsilon(w) = 0$ with $Q(p_i) = 0$ for all punctures $p_i$.
\begin{enumerate}
\item There exists a compact operator
$$
K: \Omega^0_{k,p}(w^*TQ) \to \Omega^{(0,1)}_{k-1,p;\delta}(w^*\xi)
\oplus \Omega^2_{k-2,p;\delta}(\Sigma)
$$
such that
\beastar
\| D\Upsilon(w) Y\|_{k,p;\delta} & \leq & C (\|D\Upsilon_1(w)(Y)\|_{k-1,p;\delta} +\|\pi_1K(Y)\|_{k-1,p;\delta}\\
&{}& \quad + \|D\Upsilon_2(w)(Y)\|_{k-2,p;\delta} + \|\pi_2K(Y)\|_{k-2,p})
\eeastar
and so the completed map
$$
D\Upsilon(w):
\Omega^0_{k,p;\delta}(w^*TQ) \to \Omega^{(0,1)}_{k-1,p;\delta}(w^*\xi)
\oplus \Omega^2_{k-2,p;\delta}(\Sigma)
$$
is a Fredholm operator if $\delta \in \R \setminus
\CD_w$ for some discrete subset $\CD_w$ of $\R$.
\item Furthermore, provided $0< \delta < \delta_0$ for a sufficiently small $\delta_0$ depending only on $w$,
\beastar
\operatorname{Index} D\Upsilon(w) & = & n(2-2g-s^+ - s^-) + 2c_1(w^*\xi) \\
&{}& \quad  + \sum_{i=1}^{s^+} \mu_{CZ}(\Psi^+_i)
- \sum_{j=1}^{s^-} \mu_{CZ}(\Psi^-_j)\\
&{}& \quad +
\sum_{i=1}^{s^+} (m(\gamma^+_i)+1) + \sum_{j=1}^{s^-}( m(\gamma^-_j)+1) -g\\
\eeastar
where $\mu_{CZ}(\Psi)$ is the Conley-Zehnder index of the symplectic path
$\Psi$ associated to the closed Reeb orbit \cite{conley-zehn,robbin-salamon,hofer}.
\end{enumerate}
\end{thm}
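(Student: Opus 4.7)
My plan is to establish both the Fredholm property (1) and the index formula (2) by passing to asymptotic model operators at the punctures in suitably weighted Sobolev spaces, and then by a homotopy that reduces $D\Upsilon(w)$ to a direct sum of two operators whose indices can be read off from standard theorems.

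For (1), I fix cylindrical coordinates $(\tau,t)$ at each puncture and work with $W^{k,p}_\delta$ on the domain and $W^{k-1,p}_\delta \oplus W^{k-2,p}_\delta$ on the codomain, the mixed-order split being forced by \eqref{eq:Dwdelbarpi}--\eqref{eq:Dwddot}: $D\Upsilon_1$ is first-order in $Y^\pi$ while $D\Upsilon_2$ is second-order in $\lambda(Y)$. By Theorem \ref{thm:removable} together with the asymptotic convergence of \cite{oh-wang3}, on each cylindrical end $w$ converges exponentially to a parameterized Reeb orbit $\gamma^\pm(T^\pm t)$; freezing coefficients in \eqref{eq:Dwdelbarpi}--\eqref{eq:Dwddot} at this limit produces a decoupled constant-coefficient model operator on $\R_\pm \times S^1$ equal to the direct sum of a Cauchy-Riemann operator on $w_\infty^*\xi$ with $t$-periodic symmetric asymptotic operator (generated by the linearized Reeb flow along the symplectic path $\Psi^\pm$) and the flat cylindrical Laplacian on the trivial line $\CL = \R\{X_\lambda\}$. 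Standard Lockhart--McOwen theory then produces the compact remainder $K$, yielding both the a priori estimate and the Fredholm property provided $\delta \notin \CD_w$, where $\CD_w$ is the discrete union of the spectrum of the symmetric asymptotic operator in the $\xi$-direction and $2\pi\Z$ coming from the Fourier modes of $-\Delta_{\mathrm{cyl}}$ on $S^1$.

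For (2), I would deform $D\Upsilon(w)$ to the direct-sum model $D_0 = D_1 \oplus D_2$ obtained by scaling away the cross-couplings $\tfrac12 \lambda(Y)(\CL_{X_\lambda}J)J(\del^\pi w)$ in \eqref{eq:Dwdelbarpi} and $d((Y^\pi \rfloor d\lambda)\circ j)$ in \eqref{eq:Dwddot}. Since each of these is strictly of lower order relative to the principal symbol of the summand into which it maps, the linear homotopy $s \mapsto D\Upsilon(w) + s \cdot (\text{cross terms})$ remains in the class of Fredholm operators on the same weighted spaces for $0 < \delta < \delta_0$, so the index is invariant under the deformation. The index of $D_1 = \delbar^{\nabla^\pi} + (\text{zero-order})$ on $w^*\xi$ is then the generalized Riemann--Roch index for a Cauchy-Riemann operator on a complex bundle of rank $n$ over a punctured Riemann surface with Conley--Zehnder asymptotics, giving
$$
n(2-2g-s^+-s^-) + 2c_1(w^*\xi) + \sum_i \mu_{CZ}(\Psi^+_i) - \sum_j \mu_{CZ}(\Psi^-_j).
$$
The index of the Laplacian summand $D_2 = -\Delta \circ \lambda$, viewed as a map $W^{k,p}_\delta(\Sigma) \to W^{k-2,p}_\delta(\Omega^2\Sigma)$, is computed by Fourier analysis at each cylindrical end combined with Hodge theory on the underlying closed surface: the multi-covering structure of the asymptotic Reeb orbit produces the contribution $m(\gamma^\pm)+1$ per puncture, corresponding to the Fourier modes of $-\Delta_{\mathrm{cyl}}$ lying below the weight $\delta$ once the reparameterization by the covering number is accounted for, while the $-g$ term arises from the Hodge-theoretic obstruction to surjectivity of $d:\Omega^1\to\Omega^2$ on $\Sigma$ carried by the holomorphic 1-forms.

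The principal difficulty will be twofold. First, the lower-order character of the cross-couplings must be verified relative to the mixed-order norms introduced above so that the homotopy is genuinely through Fredholm operators; this requires careful book-keeping against the contact triad connection of \cite{oh-wang1}. Second, the spectral-flow computation for the Laplacian summand on weighted cylindrical ends, and in particular the emergence of the multiplicity $m(\gamma^\pm)$ through the reparameterization of the asymptotic circle, requires a delicate separation of Fourier modes relative to the weight $\delta$ and of the harmonic forms from the exact part of $\Omega^1(\Sigma)$. These computations will be carried out explicitly in section \ref{sec:fredholm}.
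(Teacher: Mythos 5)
Your high-level scaffolding — weighted Sobolev spaces with cylindrical ends, frozen-coefficient model operators, Lockhart--McOwen for the Fredholm property, and a linear homotopy that scales away the cross-couplings so the index reduces to a sum over the two diagonal blocks — matches the paper's scheme in Propositions \ref{prop:fredholm} and \ref{prop:closed-fredholm}.  The real divergence, and where your plan as written would not go through, is the computation of the index of the Laplacian block.

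The paper does \emph{not} extract the multiplicities $m(\gamma^\pm)$ from a Fourier-mode count on the cylindrical ends.  Instead it observes that a harmonic function on $\dot\Sigma$ is the imaginary part of a holomorphic function on $\dot\Sigma$ with matching zero/pole orders, identifies the relevant complex with the Dolbeault complex
$0 \to \Omega^0(\Sigma;D) \to \Omega^1(\Sigma;D) \to 0$
for the divisor $D = \sum_i m(\gamma^+_i) p_i + \sum_j m(\gamma^-_j) q_j$ on the \emph{closed} surface $\Sigma$, and reads the index off Riemann--Roch as $\chi(D) = \deg D - g = \sum m(\gamma^+_i) + \sum m(\gamma^-_j) - g$.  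The multiplicity enters purely through the order of the singularity of the asymptotic holomorphic one-form at each puncture, not through the choice of exponential weight.  By contrast, your proposed mechanism — ``Fourier modes of $-\Delta_{\mathrm{cyl}}$ lying below the weight $\delta$ once the reparameterization by the covering number is accounted for'' — does not produce the stated count.  For the flat cylindrical Laplacian with small weight $\delta$ the sub-threshold modes at each end are only the two $n=0$ solutions ($1$ and $\tau$); there is no mechanism in the weighted Fourier analysis alone that makes $m(\gamma)$ appear, because the Laplacian summand acts on the scalar function $\lambda(Y)$ in the unweighted space $W^{k,p}(\dot\Sigma,\R)$ and its asymptotic operator is independent of the covering number.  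You would need the additional geometric input that the paper supplies (the identification with holomorphic data and the explicit divisor).

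A second, smaller inconsistency of bookkeeping: you assign all of $m(\gamma^\pm)+1$ per puncture to the Laplacian summand, whereas the paper places the Laplacian at $\sum m(\gamma^\pm) - g$ and accounts for the extra $s^+ + s^-$ through the finite-dimensional supplement $\Gamma_{s^+,s^-}$ (spanned by the cut-off Reeb directions $\overline Y_i$, $\underline Y_j$ in \eqref{eq:barY}) that is adjoined to the domain $\Omega^0_{k,p;\delta}(w^*TQ)$.  Your totals happen to agree, but you have not introduced this supplementary space, so the $s^+ + s^-$ extra dimensions are unaccounted for in your functional-analytic set-up.  To close the gap you should (i) introduce the augmented domain \eqref{eq:tangentspace} explicitly, (ii) abandon the Fourier-mode route for the Laplacian index in favour of the meromorphic-function/Riemann--Roch argument (or supply the nontrivial spectral-flow argument that replaces it), and (iii) quote Bourgeois's generalized Riemann--Roch formula for the $\delbar^{\nabla^\pi}$ block with the $(s^+ + s^-)$ correction made explicit.
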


We would like to highlight the appearance of the second line that extracts explicit
contribution depending on the multiplicity of the closed Reeb orbits. Such an appearance
in this kind of index formula seems to be new, at least such an explicit dependence on
the multiplicity does not shows up
in the standard index formula in symplectic field theory such as in Proposition 5.3 \cite{bourgeois}
(with $N = 0$.)

The present paper has been circulated in the author's homepage since year 2013. Only after
the appearance of the paper \cite{oh:contacton-Legendrian-bdy}, which deals with the relevant boundary value
problem is studied, and \cite{oh:entanglement1}, which contains nontrivial application to
the contact Hamiltonian dynamics, the proof of Sandon-Shelukhin's conjecture, we have 
made it public because we now have
enough evidence on usefulness of the analytical machinery of contact instantons
developed in \cite{oh-wang1}, \cite{oh-wang2}, \cite{oh-wang3} and the present paper.
Furthermore we have showed that this machinery is also useful to develop the
theory of pseudoholomorphic curves on locally conformal symplectic manifolds.
(See \cite{oh-savelyev}.)

 The geometric analysis of the contact instantons
such as the $C^1$-convergence and the bubbling analysis of finite energy solutions,
and the derivation of the precise formula of the linearized operator, its Fredholm theory and
relevant tensor calculations developed in the present article are the bases of all later articles
 \cite{oh:contacton-Legendrian-bdy}  -- \cite{oh:perturbed-contacton}
 after combined with their adaptation to the boundary value problems of  the equation.

Besides its naturality of the framework and the aforementioned
intrinsic significance, we now would like to provide further motivation to develop
analysis of contact instantons by comparing it with the existing frameworks of
pseudoholomorphic curves on the symplectization or of the Rabinowitz-Floer homology. The full development of
the analysis of contact instantons and its Hamiltonian perturbations
comprise the two series of papers, the first one consisting of
\cite{oh-wang1}-\cite{oh-wang3} including the present article, and
the second one consisting of \cite{oh:contacton-Legendrian-bdy}-\cite{oh:perturbed-contacton}. One should compare these with
Gromov's pseudoholomorphic curves and Floer's Hamiltonian-perturbed
pseudoholomorphic curves in symplectic geometry, and take the whole package
as a whole  similarly as in symplectic case
when one try to apply  the machinery to problems of contact
dynamics and contact topology.

We would like to emphasize
that \emph{with these analytical foundations of (perturbed) contact instantons
in our  disposal}, the remaining study of contact Hamiltonian dynamics
utilizing perturbed contact instatnons  e.g., construction of relevant
contact spectral invariants on contact manifolds is largely geometro-topological
and dynamical. This enables us to carry out such a study \emph{in an optimal way}
because  the perturbed contact instanton equation \eqref{eq:perturbed-contacton}
interacts with \emph{contact Hamiltonian calculus} best in the straightforward
canonical fashion as illustrated by those in \cite{oh:entanglement1}. Such a study
through the analysis of the perturbed pseudoholomorphic curves such as through Rabinowitz-Floer
homology involves extra step of lifting to the symplectization (see \cite{albers-fuchs-merry} for example),
 which we believe would destroy optimality because of
\emph{irreversibility} of the lifting process. See below for more discussion on this point.

\subsection{Comparison with the analysis of pseudoholomorphic curves on symplectization}

One important feature of our analysis of \eqref{eq:contacton} is that
we do not take symplectization of contact triad $(Q,\lambda,J)$ but directly work
on the contact manifold $Q$.  Hence it enables us to construct
compactification of the smooth moduli space of contact instantons
\emph{with prescribed asymptotic conditions} as long as the charge class is
fixed. This is because \emph{the charge automatically vanishes on the bubbles} since
the domains of bubbles are spheres. (See \cite{oh-savelyev} for the details of
this compactification.) This enables us to define a genuinely contact topological invariant
\emph{without taking the symplectization of $Q$}. Indeed
the question if two contact manifolds having symplectomorphic symplectization are
contactomorphic or not was addressed  in the book by Cieliebak and Eliashberg
\cite{ciel-eliash} and S. Courte \cite{courte} constructed two contact manifolds that have
symplectomorphic symplectization which are not contactomorphic. In this regard,
we hope to investigate the following question stated in \cite{courte} in the future.
\begin{ques} Does there exist contact structures $\xi$ and $\xi'$ on a closed
manifold $M$ that have the same classical invariants and are not contactomorphic,
but whose symplectizations are
(exact) symplectomorphic?
\end{ques}

\subsubsection{Pseudoholomorphic curves in locally conformal symplectic (lcs) manifolds}

As pointed out in \cite{oh-wang1}, the phenomenon of `\emph{appearance of spiraling
contact instantons along the Reeb core}' obstructs the construction of compactified moduli
spaces and its Fredholm theory in general because of
possibility of \emph{nonvanishing of asymptotic charge}. In \cite{oh:contacton-Legendrian-bdy},
this obstacle is automatically removed for the contact instantons with
Legendrian boundary condition. Besides this open-string case, there is a
nice way of dealing with this obstacle by \emph{quantizing} the charge class
by considering the canonical \emph{$\frak{lcs}$-fication}
$$
(Q \times S^1, d\lambda + d\theta \wedge \lambda)
$$
of the contact manifold $(M,\lambda)$ and consider the equation
$$
\delbar^\pi w = 0, \quad w^*\lambda \circ j = g^*d\theta
$$
for a map $u = (w,g): \dot \Sigma \to M \times S^1$
where $\frac{1}{2\pi} [d\theta]$ is the standard generator of $H^1(S^1,\Z)$.
Then we decompose the moduli spaces
into sub-moduli space and handling them separately according to the \emph{charge class},
the cohomology class of the closed one-form $w^*\lambda \circ j$. This enable us to construct
a compactfication of  the moduli space of instantons as carried out in \cite{oh-savelyev}.
We anticipate that this construction will be useful for the study of topology of the group
 $\Cont(M,\xi)$ of contactomorphisms. (See \cite[Section 2.2]{oh-savelyev}.)

\subsubsection{Hamiltonian perturbed contact instantons and contact dynamics}

The genuine power of our intrinsic framework without taking the symplectization
lies in the study of \emph{perturbed contact instantons
\be\label{eq:perturbed-contacton}
(dw - X_H \otimes \gamma)^{\pi(0,1)} = 0, \quad
d\left(e^{g_t(w)}(w^*\lambda + H \otimes \gamma)\circ j\right) = 0
\ee
with Legendrian boundary condition} and its application to \emph{contact Hamiltonian dynamics}
\cite{oh:entanglement1}, and also to construction of
the relevant contact spectral invariants \cite{oh-yso:spectral}.
We recall the correct definition of action functional given in
\cite{oh:entanglement1,oh:contacton-gluing} in this regard.

In \cite[Appendix A]{oh:entanglement1},
it is shown that this Hamiltonian-perturbed contact instanton equation is
\emph{not} the projection of the standard Hamiltonian-perturbed Floer
trajectory equation of the homogeneous lifting $\widetilde H$ on the symplectization $SM$
of the contact Hamiltonian $H$ on $M$. While they coincide when $H = 0$ (under the charge
vanishing), \eqref{eq:perturbed-contacton}  is not the reduction of the
Floer trajectory equation on the symplectization:
The latter equation does not interact well with the contact Hamiltonian calculus
in generating the optimal energy estimates e.g., in relation to Sandon-Shelukhin
type conjecture: It is an open question
whether or not the existing technology in the literature such
as Rabinowitz-Floer homology and
others can reproduce the \emph{optimal result} proved in \cite[Theorem 1.13]{oh:entanglement1}
\emph{on arbitrary compact contact manifolds}. (See
\cite[Example 1.4 \& Lemma 1.6]{rizell-sullivan} to see that the results established in \cite[Theorem 1.13]{oh:entanglement1}
is optimal on $\R^5$ equipped with the standard contact structure.)
While contact dynamics can be lifted to the \emph{homogeneous Hamiltonian dynamics} in the symplectization,
 this lifting operation is \emph{not reversible}, at least easily. In this regard, we strongly believe that the
geometro-analytical framework of perturbed contact instantons is more flexible and
convenient framework than the existing frameworks on the symplectization or of
the Rabinowitz-Floer homology at least
in the study of contact Hamiltonian dynamics and relevant spectral invariants.
We would like to compare how \cite[Theorem 1.13]{oh:entanglement1} is
proved for arbitrary (tame) contact manifolds in Part II thereof
with other existing works  related to the Sandon-Shelukhin's conjecture
\cite{albers-merry,albers-fuchs-merry}, \cite{rizell-sullivan:capacity}: While
the former provides a precise optimal estimate, the latters do not.
The kind of optimal study given in \cite{oh:entanglement1}, via the construction of
spectral invariants, will be further
investigated in \cite{oh-yso:spectral} in the one-jet bundle, and in \cite{oh:entanglement2}
in a more categorical point of view utilizing the Fukaya-type construction and
relevant filtration structure.

\subsubsection{Construction of continuation induced chain map}

The aforementioned optimal estimate partially comes from the way how we
define a continuation-induced map in the framework of contact instanton Floer homology
constructed in \cite{oh:entanglement1,oh:contacton-gluing} whose explanation is
now in order.

Another advantage of our framework over the symplectization lies in the construction of
continuation map induced by Hamiltonian perturbations.
As mentioned in  \cite[Remark 13.2]{oh:contacton-gluing}, the invariance proof
of the relevant Floer homology of \emph{compact} Legendrian submanifolds is rather subtle
in the context of \emph{Legendrian contact homology constructed via the symplectization} in the literature of
contact topology because of the issue of `\emph{moving the infinity of the cylindrical Lagrangian}'.
The common invariance proof of contact homology in the literature uses the argument using the exact symplectic cobordism
and is in the spirit rather different from our invariance proof given in \cite[Section 14]{oh:contacton-gluing}
(See \cite{EGH} for a sketch of such a proof. This was carried out in \cite[Appendix B]{ekholm-rational}
under some technical assumptions on the contact manifold.)
On the other hand the invariance proof given in \cite[Section 14]{oh:contacton-gluing}
is rather straightforward which is similar
to that of Floer cohomology of \emph{compact} Lagrangian submanifolds given in
\cite{oh:cpam1} which uses the \emph{moving boundary condition}.

\bigskip

\noindent{\bf Acknowledgement:}
We thank Rui Wang for the collaboration of the works \cite{oh-wang1,oh-wang2,oh-wang3}
which the current research is partially based on. We also thank her for some useful
comments on the present paper.

\section{Three elliptic twistings of contact Cauchy-Riemann map equation}
\label{sec:elliptictwisting}

The contact Cauchy-Riemann equation itself $\delbar^\pi w = 0$ does not form
an elliptic system because it is degenerate along the Reeb direction: Note that the rank of
$w^*TQ$ has $2n+1$ while that of $w^*\xi\otimes \Lambda^{0,1}(\Sigma)$ is $2n$.
Therefore to develop suitable deformation theory and a priori estimates, one needs to
lift the equation to an elliptic system. In hindsight, the pseudoholomorphic curve
system of the pair $(w,a)$ is one such lifting via introducing an auxiliary variable $a$ a function on the
Riemann surface, when the one-form $w^*\lambda \circ j$ is exact.
Hofer \cite{hofer} did this by lifting the equation to the symplectization $Q \times \R$
and considering the pull-back function $a: = s\circ w$ of the $\R$-coordinate function
$s$ of $Q \times \R$. By doing so, he \emph{added one more variable} to the equation $\delbar^\pi w =0$
while \emph{adding 2 more equations $w^*\lambda \circ j = da$} and produced an elliptic
system which is exactly becomes Gromov's pseudoholomorphic curve system on the symplectization
$Q \times \R$.

\subsection{Contact instanton lifting of contact Cauchy-Riemann map}

It turns out, again by hindsight, the current contact instanton map system
\be\label{eq:contact-instanton}
\delbar^\pi w = 0, \quad d(w^*\lambda \circ j) = 0
\ee
is such an elliptic lifting which is more natural in some respect in that it does not introduce
any additional variable and keeps the original `bulk', the contact manifold $Q$.

The relevant a priori (local) elliptic estimates and the global exponential decay
estimates near the puncture of the punctured Riemann surface $\dot \Sigma$
have been established in \cite{oh-wang2}.
This is the lifting whose study is the main theme of the present paper and
is also closely related to the following lifting
of \emph{gauged sigma model with abelian Hick's field}.
We would like to emphasize that this lifting includes the study of
pseudoholomorphic curves in symplectization as the special case of
exact $w^*\lambda \circ j$.

\subsection{Gauged sigma model lifting of contact Cauchy-Riemann map}

There is another lifting of $w$ this time involving a section of complex line bundle
\be\label{eq:CLlambda}
\CL_\lambda \to Q
\ee
whose fiber at $q \in Q$ is given by
$$
\CL_{\lambda,q} = \R_{\lambda,q} \otimes \C
$$
where $\R_\lambda \to Q$ is the trivial real line bundle whose fiber at $q$ is given by
$$
\R_{\lambda,q} = \R \{X_\lambda(q)\}.
$$
Note that $\CL_q$ has a canonical identification with the bundle
$$Q \times \R_+
T(Q \times \R_+)|_{\{r =1\}} =   \xi \oplus \R \cdot X_\lambda \oplus \R \cdot \frac{\del}{\del s}
$$
in the symplectization $Q \times \R_+$.

Now let $w: \Sigma \to Q$ be a smooth map where $\Sigma$ is either closed or
a punctured Riemann surface, and $\chi$ be a section of the pull-back bundle
$w^*\CL_\lambda$.

\begin{defn} We call a triple $(w,j,\chi)$ consisting of a complex structure $j$ on $\Sigma$,
$w: \Sigma \to Q$ and a $\C$-valued one-form
$\chi$ a \emph{gauged contact instanton} if they satisfy
\be\label{eq:gauged-instanton}
\begin{cases}
\delbar^\pi w = 0 \\
\delbar \chi = 0, \quad
\Im \chi = w^*\lambda.
\end{cases}
\ee
\end{defn}

This system is a coupled system of the contact Cauchy-Riemann map equation and
the well-known Riemann-Hilbert problem of the type which solves the real part in terms of
the imaginary part of holomorphic functions in complex variable theory.

\subsection{Pseudoholomorphic lifting of contact Cauchy-Riemann map}

The above two liftings do not have any restriction on the cohomology class $[w^*\lambda \circ j] \in H^1(\dot \Sigma;\R)$.
On the other hand, there is the more commonly known elliptic twisting \emph{under the restriction
that $w^*\lambda \circ j$ is exact}, and \emph{with the specification of the
anti-derivative of $w^*\lambda \circ j$} as an auxiliary variable $a: \Sigma \to \R$ by requiring
$$
w^*\lambda \circ j = da
$$
whose expounding is now in order. We call a contact instanton \emph{exact} if $[w^*\lambda \circ j] = 0$.

\begin{rem}\label{rem:exact}
We would like to point out that the exact case itself
forms a closed realm in the study of contact instantons and does not need to involve symplectization in its study.
If we restrict to the exact contact instantons, any adiabatic puncture with $T = 0$ will
be removable as in the case of pseudoholomorphic curves by Theorem \ref{thm:removable}. This enables us to perform the standard
Gromov-Floer theory type compactification of the moduli space of exact contact
instantons and to define a Floer homology type invariants. However the geometry of contact instantons
is not exactly the same as that of pseudoholomorphic curves in symplectization and so we do not
expect the algebraic structures of the contact homology type invariants coincide.
In \cite{oh-savelyev}, we construct a compactification of the moduli space of contact
instantons \emph{with each fixed charge class} which includes the exact case
(e.g., the case of pseudoholomorphic curves in symplectization) as a subcase, and
give a construction of the relevant contact homology type invariants in
\cite{oh:entanglement1,oh:contacton-gluing}.
\end{rem}

We consider the canonical symplectization $E \to Q$ (see
section \ref{sec:canonical}).
Note that in the presence of contact form $\lambda$, any smooth map
$w:\dot \Sigma \to Q$ can be naturally lifted to a map $\widetilde w :\dot \Sigma \to W$
so that
\be\label{eq:tildew}
\widetilde w(z) = a(z) \lambda(w(z)) \in W_{w(z)} \subset T^*_{w(z)}Q
\ee
for some function $a: \dot \Sigma \to \R_+$ or equivalently to a map
$$
(w,a) : \dot \Sigma \to Q \times \R
$$
via the trivialization $\exp\circ \Phi_\lambda: Q \times \R \to W$.

Now we equip $(Q,\xi)$ with a triad $(Q,\lambda, J)$ and the cylindrical
almost complex structure $\widetilde J = J_0 \oplus J$. Then
the derivative $d\widetilde w = dw \oplus da \frac{\del}{\del s}$ can be further decomposed to
\be\label{eq:dtildew}
d\widetilde w(z) =  d^\pi w \oplus w^*\lambda X_\lambda \oplus da\frac{\del}{\del s}.
\ee
as a $TW$-valued 1-form with respect to the splitting
$$
\operatorname{Hom}(T_z\dot \Sigma, T_{\widetilde w(z)}W)
=  \operatorname{Hom}(T_z\dot \Sigma, HT_{\widetilde w(z)}W)
\oplus \operatorname{Hom}(T_z\dot \Sigma, VT_{\widetilde w(z)}W).
$$
By definition, we have
$$
d\pi  \widetilde{dw} = dw.
$$
It was derived by Hofer \cite{hofer} that $\widetilde w$ is $\widetilde J$-holomorphic
if and only if $(w,a)$ satisfies
\be\label{eq:tildeJ-holo}
\begin{cases}
\delbar^\pi w = 0 \\
w^*\lambda \circ j= da.
\end{cases}
\ee

\section{Canonical symplectization and Hofer's $\lambda$-energy; revisit}
\label{sec:canonical}

In this subsection, we first recall the canonical symplectization of contact manifold $(Q,\xi)$
explained in Appendix 4 \cite{arnold-book}, which does not involve the choice of contact form.
We denote this canonical symplectization by $(W,\omega_W)$ which is defined to be
\be\label{eq:setalpha}
\{\alpha \in T^*Q \mid \alpha \neq 0, \, \ker \alpha = \xi \} \subset T^*Q \setminus \{0\}.
\ee
When $Q$ is oriented and a positive contact form $\lambda$ is given, we can canonically lift
a map $w: \dot \Sigma \to Q$ to a map $\widehat w:\dot \Sigma \to W$.
We then examine the relationship between $w$ being a contact instanton and $\widehat w$
being a pseudoholomorphic curves on $W$ with respect to scale-invariant almost complex
structure on $W$. We give a geometric description of Hofer's remarkable energy
introduced in \cite{hofer} in terms of this canonical symplectization. This energy
is the key ingredient needed in the bubbling-off analysis and so in the construction of the
compactification of the moduli spaces of pseudoholomorphic curves needed to develop the
symplectic field theory \cite{EGH}, \cite{behwz}. In section \ref{sec:offshellenergy},
we will then introduce its variant for the study of contact instanton maps whose
charge is not necessarily vanishing, i.e. $w^*\lambda \circ j$ does not have to be exact.

Consider the $(2n+2)$-dimensional submanifold $W$ of $T^*Q$ defined in \eqref{eq:setalpha}.
When we fix an orientation $Q$, we can consider
\be\label{eq:W}
W = \{\alpha \in T^*Q \setminus \{0\} \mid \ker \alpha = \xi, \, \alpha(\vec n) > 0\}
\ee
where $\vec n$ is a vector such that $\R \{\vec n\} \oplus \xi$ becomes a positively
oriented basis. Note that $W$ is a principal $\R_+$-bundle over $Q$ that is trivial.

We denote by $i_W: W \hookrightarrow T^*Q$ and by $\Theta$ the Liouville
one-form on $T^*Q$. The basic proposition is that $W$ carries the canonical symplectic form
$$
\omega_W = -i_W^*d\Theta.
$$
One important point of  this canonical symplectization is the fact that it depends only on the orientation of $Q$
but does not depend on the choice of contact form $\lambda$. The symplectic form $\omega_W$
provides a natural symplectic (Ehresmann) connection provided by the splitting
\be\label{eq:splitting1}
TW = \widetilde{TQ} \oplus VTW
\ee
where $VTW$ is the vertical tangent bundle and
\be\label{eq:omegaWperp}
\widetilde{TQ}|_\alpha = \{\eta \in T_\alpha W \mid \omega_W(\eta, \cdot) \equiv 0 \}.
\ee
\par
Now we choose a contact form $\lambda$ so that $\lambda \wedge (d\lambda)^n$ is
positive with respect to the given orientation. Since $\lambda$ provides a section of
of $W \to Q$, it induces a trivialization of $W$ as the principal $\R_+$-bundle
$$
\Phi_\lambda: Q \times \R_+ \to W; \quad (r,q) \mapsto r\, \lambda(q)
$$
which in turn leads to the natural isomorphism
\be\label{eq:splitting2}
 TQ \oplus \R \cong TW =  \widetilde{TQ} \oplus \R \cdot \lambda
 \ee
defined by $(Z,c) \mapsto \widetilde Z \oplus c \lambda$.
Combining this with \eqref{eq:splitting1},
we obtain the splitting
\be\label{eq:omegaWperp2}
TW = \widetilde \xi \oplus  \widetilde{\R\cdot X_\lambda} \oplus \widetilde {\R \cdot \lambda}.
\ee
We note that there is a canonical paring on
$ \widetilde{\R\cdot X_\lambda} \oplus \widetilde {\R \cdot \lambda}$
given by
$$
\langle \lambda, X_\lambda \rangle = 1
$$
and so it carries the canonical symplectic form thereon. We summarize the above discussion into
\begin{prop}\label{prop:canonical-symp}
Suppose $Q$ is given an orientation and a positive contact form $\lambda$. Then it provides a
natural $\R_+$-equivariant symplectomorphism
$$
\Phi_\lambda: Q \times \R_+ \to W
$$
whose derivative induces a canonical $\R_+$-equivariant symplectic vector bundle isomorphism
$$
d\Phi:(TQ \oplus \R^2, d\lambda \oplus \omega_{0,2}) \to (TW, \omega_W)
$$
\be\label{eq:dPhilambda}
d\Phi_\lambda(Z, b, a) =  \widetilde Z +a\, \widetilde \lambda + b\, \widetilde X_\lambda.
\ee
\end{prop}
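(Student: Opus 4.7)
The plan is to verify in turn that (a) $\Phi_\lambda$ is a well-defined smooth $\R_+$-equivariant diffeomorphism onto $W$, (b) it pulls $\omega_W$ back to the symplectic form corresponding to $\omega_{0,2}\oplus d\lambda$ on $T(\R_+\times Q)$, and (c) its differential has the claimed form under the splitting \eqref{eq:omegaWperp2}. Item (a) is essentially tautological: since $\lambda$ is a nowhere-vanishing positive contact form, each fiber of $W\to Q$ is the $\R_+$-orbit of $\lambda(q)$, so $(r,q)\mapsto r\lambda(q)$ is a smooth bijection with smooth inverse $\alpha\mapsto (\alpha(X_\lambda(\pi(\alpha))),\pi(\alpha))$, and the identity $\Phi_\lambda(sr,q)=s\,\Phi_\lambda(r,q)$ is immediate from the definition.

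The core of the argument is item (b). Since $\omega_W = -i_W^*d\Theta$ and exterior derivative commutes with pullback, it suffices to compute $(i_W\circ\Phi_\lambda)^*\Theta$. From the defining property $\Theta_\alpha(v)=\alpha(d\pi(v))$ of the Liouville form together with the identity $\pi\circ i_W\circ\Phi_\lambda = \pi_Q$ (where $\pi_Q:\R_+\times Q\to Q$ is the projection), a direct evaluation gives
\be
(i_W\circ\Phi_\lambda)^*\Theta \,=\, r\,\pi_Q^*\lambda, \quad\text{hence}\quad \Phi_\lambda^*\omega_W \,=\, -dr\wedge\pi_Q^*\lambda - r\,\pi_Q^*d\lambda.
\ee
Evaluating on a pair of tangent vectors written in the form $a\partial_r + bX_\lambda + Z$ with $Z\in\xi$, and using $\iota_{X_\lambda}d\lambda=0$, the right-hand side splits as a direct sum of an $\R^2$-contribution (carried by the $\partial_r$ and $X_\lambda$ directions) equal to $\pm(ab'-a'b)$ and a $\xi$-contribution equal to $-r\,d\lambda|_\xi(Z,Z')$; this is precisely the product form $\omega_{0,2}\oplus d\lambda$ up to the sign convention of $\omega_{0,2}$, and $\R_+$-equivariance of both sides reduces the general fiber at $(r,q)$ to the fiber at $(1,q)$.

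For item (c), $d\Phi_\lambda(\partial_r)$ is, by construction, the generator of the $\R_+$-action along the fiber, namely the vertical vector $\widetilde\lambda$ in the sense of \eqref{eq:splitting2}. For $Z\in T_qQ$, the vector $d\Phi_\lambda(0,Z)$ is tangent to the image of the section $q\mapsto r\lambda(q)$; the explicit pullback formula shows that $\iota_{d\Phi_\lambda(0,Z)}\Phi_\lambda^*\omega_W$ annihilates $\partial_r$, so $d\Phi_\lambda(0,Z)$ lies in the horizontal distribution $\widetilde{TQ}$ defined in \eqref{eq:omegaWperp}. Writing $Z=bX_\lambda+Z^\pi$ with $Z^\pi\in\xi$ and invoking the canonical identification of \eqref{eq:splitting2}, this horizontal lift decomposes as $b\widetilde{X_\lambda}+\widetilde{Z^\pi}$, and combining with the vertical part gives the asserted formula $d\Phi_\lambda(a,b,Z) = a\widetilde\lambda + b\widetilde{X_\lambda} + \widetilde Z$.

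The hard part is not analytical but bookkeeping: one must reconcile three presentations of $TW$—the product splitting $\R\partial_r\oplus TQ$ inherited from $\Phi_\lambda$, the vertical/horizontal decomposition \eqref{eq:splitting1} determined by $\omega_W$, and the triple splitting \eqref{eq:omegaWperp2}. The substantive check is that the Ehresman horizontal lifts defined by $\omega_W$ agree with the images under $d\Phi_\lambda$ of the corresponding tangent vectors in $TQ$; this reduces to the identity $\iota_{\partial_r}(-d(r\,\pi_Q^*\lambda)) = -\pi_Q^*\lambda$, whose kernel is exactly $TQ\subset T(\R_+\times Q)$, so that the horizontal distribution $\widetilde{TQ}$ is identified with the $TQ$-factor of the product splitting.
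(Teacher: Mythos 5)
Your items (a) and (b) are fine and in fact recover the content of the paper's own (informal) discussion: the proposition is stated as a ``summary'' of what precedes it rather than proved as such, and the only real computation involved is $\Phi_\lambda^*\Theta = r\,\pi_Q^*\lambda$, whence $\Phi_\lambda^*\omega_W = -d(r\,\pi_Q^*\lambda)$ (modulo the paper's vacillating sign conventions, which you correctly flag). $\R_+$-equivariance is also as you say.

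Item (c) contains a concrete error. You claim that for arbitrary $Z\in T_qQ$ the contraction $\iota_{d\Phi_\lambda(0,Z)}\Phi_\lambda^*\omega_W$ annihilates $\partial_r$, and deduce from this that $d\Phi_\lambda(0,Z)$ lies in the $\omega_W$-orthogonal complement of $VTW$. But computing directly from $\Phi_\lambda^*\omega_W = -dr\wedge\pi_Q^*\lambda - r\,\pi_Q^*d\lambda$,
$$
\Phi_\lambda^*\omega_W\bigl((0,Z),\,(\partial_r,0)\bigr) \;=\; \lambda(Z),
$$
which vanishes only when $Z\in\xi$; for $Z=X_\lambda$ it equals $1$. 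So $d\Phi_\lambda(0,X_\lambda)$ is \emph{not} $\omega_W$-orthogonal to the vertical. In fact this nonvanishing pairing $\omega_W(\widetilde\lambda,\widetilde X_\lambda)=\pm1$ is exactly the ``canonical pairing $\langle\lambda,X_\lambda\rangle=1$'' the paper invokes, so it had better not be zero. Relatedly, your closing claim that the kernel of $\iota_{\partial_r}(-d(r\pi_Q^*\lambda)) = -\pi_Q^*\lambda$ ``is exactly $TQ$'' is wrong: that kernel is $\R\,\partial_r\oplus\xi$, which is $(2n+1)$-dimensional and \emph{contains} $\partial_r$. This reflects the fact that $VTW$ is isotropic for $\omega_W$, so its $\omega_W$-orthogonal $(VTW)^{\omega_W}$ has codimension $1$ and contains $VTW$; it therefore cannot be a complementary ``horizontal'' bundle, and the Ehresman-connection picture you (and, to be fair, the paper's equations \eqref{eq:splitting1}--\eqref{eq:omegaWperp}, which are imprecise as written) use for this step does not actually exist in the form invoked.

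The repair is straightforward and does not require any Ehresman connection: simply take $\widetilde\lambda := d\Phi_\lambda(\partial_r,0)$ (the vertical generator), $\widetilde X_\lambda := d\Phi_\lambda(0,X_\lambda)$, and $\widetilde Z := d\Phi_\lambda(0,Z)$ for $Z\in\xi$ by \emph{definition}; then \eqref{eq:dPhilambda} is tautological, and what must be checked is that under $\Phi_\lambda^*\omega_W$ the subspace $\R\widetilde\lambda\oplus\R\widetilde X_\lambda$ is a symplectic plane (from the pairing above) whose $\omega_W$-orthogonal is $\widetilde\xi$, on which $\Phi_\lambda^*\omega_W$ restricts to $-r\,d\lambda|_\xi$. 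That is exactly the content of your item (b) computation, so once you drop the spurious horizontality claim the argument closes.
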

The usual symplectization of $(Q,\lambda)$ used in the literature is nothing but $\R_- \times Q$ with the pull-back
symplectic form $(\Phi_\lambda)^*\omega_W$ thereto, which can be explicitly written as
$$
(\Phi_\lambda)^*\omega_W = (\Phi_\lambda \circ i_W)_*\Theta = d(r\, \pi^*\lambda)
$$
where $r = r_\lambda \in \R^+$ is the radial coordinate such that the embedding $Q \hookrightarrow W$
corresponds to the hypersurface $r=1$ and $\pi: Q \times \R_+ \to Q$ the
projection. If we now pull-back this form to $Q \times \R$ by the diffeomorphism
$\exp: Q \times \R \to Q \times \R_+$ defined by $\exp(s,q) = (e^s,q)$, then the corresponding symplectic form becomes
$$
e^s(\pi^*d\lambda + ds \wedge \pi^*\lambda), \quad \pi: Q \times \R \to Q.
$$
\par
Next we involve an endomorphism $J: \xi \to \xi$ with $J^2 = -id$ such that $(\xi,J,g_\lambda)$
with $g_\lambda = d\lambda(\cdot, J \cdot)$ becomes a Hermitian vector bundle. For the purpose of
doing analysis on $Q \times \R$, we need to provide a \emph{cylindrical metric} thereon which we
choose
$$
g_\lambda + dr^2 = d\lambda(\cdot, J\cdot) + \lambda \otimes \lambda + dr^2
$$
and cylindrical almost complex structure
$$
\tilde J = J_0 \oplus J
$$
on $T(Q \times \R) \cong \R\{\frac{\del}{\del r}\} \oplus \R\{X_\lambda \} \oplus \xi$.
On the other hand, the pull-back symplectic form becomes
$$
e^s(\pi^*d\lambda + ds \wedge \pi^*\lambda)
$$
which is \emph{not cylindrical}. The above fact that the pull-back symplectic form is not
cylindrical makes the topological control of
the \emph{full} harmonic energy of a $\widetilde J$-holomorphic map $u: \Sigma \to Q \times \R$
by the symplectic area of this symplectic form not possible in general, \emph{unless one has the
control of the coordinate $a = s \circ w$}.

Instead one tries to control the \emph{local} (in target)
harmonic energy by considering the map
$$
\widehat \psi: Q \times \R \to W; \quad \widehat \psi(s,x) = \psi(s) \, (\pi^*\lambda)(x)
$$
associated to each monotonically increasing function $\psi$ such that
\be\label{eq:varphi}
\psi(s) =
\begin{cases} 1 \quad & \mbox{for $s \geq R_1$} \\
\frac{1}{2} \quad & \mbox{for $s \leq R_0$}
\end{cases}
\ee
for any pair $R_0 < R_1$ of real numbers. We measure the symplectic area
of the composition $\widehat\psi \circ w: \dot \Sigma \to W$
for all possible variations of such $\psi$.  Hofer's original definition of
this type of energy then can be expressed as the integral
\bea
E_\CC(u)& : = & \sup_{\psi} \int_{\dot \Sigma} (\widehat\psi \circ u)^*\omega_W \label{eq:hofer-energy2}\\
& = & \sup_{\psi} \int_{\dot \Sigma} (\widehat\psi \circ u)^*d (r\, \pi^*\lambda) \nonumber\\
& = & \sup_{\psi} \int_{\dot \Sigma} d (\psi(s)\, \pi^*\lambda) \label{eq:hofer-energy}\\
& = & \sup_{\psi}\left(\int_{\dot \Sigma} \psi(a) dw^*\lambda + \psi'(a)\, da \wedge w^*\lambda\right).
\eea
Note that \eqref{eq:hofer-energy} is precisely the same as Hofer's original definition of his
energy given in \cite{hofer}. Later in \cite{behwz}, the authors split this energy into two parts,
one purely depending on $w$
$$
E^\pi(w)= \int_{\dot \Sigma} dw^*\lambda
$$
and the other
$$
E^\lambda(u) =  \sup_{\psi} \int_{\dot \Sigma} \psi'(a)\, da \wedge w^*\lambda.
$$
In retrospect, it was an amazing insight of Hofer \cite{hofer} that this way of considering nicely controls the
bubbling-off analysis when there is no apparent way of controlling the asymptotic behaviour of the
bubble map $\C \to Q \times \R$ when the bubble map is not confined in a compact domain of
$Q \times \R$.

\section{Jenkins-Strebel quadratic differential and minimal area metrics}
\label{sec:jenkins}

For any given marked Riemann surface $(\Sigma, \{r_1,\cdots, r_k\})$, we denote by
$\dot \Sigma$ the associated punctured Riemann surface. We assume either $\genus \Sigma \geq 1$ or
$\genus \Sigma = 0$ with $k \geq 2$.

Following Zwiebach \cite{zwiebach}, we give a description of the notion of minimal area metric
associated to the given punctured Riemann surface $\dot \Sigma$ and its relationship with
the Jenkins-Strebel quadratic differentials. We also refer to section 2 of Bergmann's preprint \cite{bergmann} for
some discussion that is in the similar spirit as that of this section.

\begin{defn} A metric $h = \rho \, |dz|$ is called \emph{admissible} for a set of
constants $A_j$ if
$$
\int_{\gamma; \gamma \sim \gamma_j} \rho\, |dz| \geq A_j
$$
for any curve $\gamma$ homotopic to $\gamma_j$ in $\dot \Sigma$.
\end{defn}

In this metric, one has the semi-infinite tubes of circumference $\ell \geq A_j$ at each
puncture $r_j$. Near the puncture $r_j$, one must have
$$
\rho^2(z) \sim (A_j/2\pi |z|)^2.
$$
\begin{defn}[Reduced area \cite{zwiebach}] The reduced area, denoted by $\Area^{red}(\Sigma,h)$ is given by
\be\label{eq:reducedarea}
\Area^{red}(\Sigma,h) = \lim_{\delta \to 0}
\left(\int\int_{\Sigma(\delta)} dA + \frac{1}{2\pi} \ln \delta \sum_{j=1}^k A_j^2\right)
\ee
where $\Sigma(\delta)$ denotes the surface obtained by excising the discs $|z_j| \leq \delta$ from $\Sigma$.
\end{defn}

\begin{defn}[Minimal area metric] A metric $h$ on $\dot \Sigma$ is called a \emph{minimal area metric}
if the reduced area is minimal among all possible metrics arising from quadratic differentials.
\end{defn}

From now on, we restrict ourselves to the case of $g=0$.
We will need the following basic existence and the uniqueness result proved in \cite{zwiebach}

\begin{thm}[Zwiebach \cite{zwiebach}] When $g = 0$ and $k \geq 3$, there exists a unique
minimal area metric associated to each $(\Sigma, j) \in \CM_{0,k}$, which continuously extends to
the compactification $\overline \CM_{0,k}$.
\end{thm}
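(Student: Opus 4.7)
The plan is to deduce existence, uniqueness, and minimality from the classical theory of Jenkins-Strebel quadratic differentials, and then to analyze the behavior of those differentials under moduli degeneration in order to obtain the continuous extension to $\overline\CM_{0,k}$.

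First, for a fixed $(\Sigma,j)\in\CM_{0,k}$ and prescribed circumferences $A_1,\dots,A_k>0$ I would invoke the Jenkins-Strebel existence theorem to produce a unique meromorphic quadratic differential $\varphi$ on $\Sigma$ with double poles of residue $(A_j/2\pi)^2$ at each puncture $r_j$, whose critical horizontal trajectories form a finite graph partitioning $\dot\Sigma$ into $k$ ring domains $R_1,\dots,R_k$; each $R_j$ is foliated by simple closed horizontal trajectories homotopic to $r_j$, of common $\varphi$-length $A_j$. For $g=0$ and $k\geq 3$ this is the classical extremal-length problem on the sphere with marked points. I would then set $h_\varphi:=|\varphi|^{1/2}|dz|$ in any local coordinate. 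Admissibility is immediate from the length-area inequality inside each ring: any closed curve homotopic to $r_j$ crosses every horizontal trajectory of $R_j$, hence has $h_\varphi$-length at least $A_j$, and the required asymptotics $\rho^2\sim(A_j/2\pi|z|)^2$ near each puncture fall out of the double-pole expansion of $\varphi$.

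Next I would prove minimality and uniqueness together. For any competing admissible $h'=\rho'|dz|$, the asymptotic constraint forces the leading $\ln\delta$-coefficient in \eqref{eq:reducedarea} to match that of $h_\varphi$, so the divergent terms cancel in the difference. Within each ring $R_j$, I would apply Cauchy-Schwarz along each horizontal trajectory $\gamma_t$: admissibility gives
\[
A_j^2\;\leq\;\Big(\int_{\gamma_t}\rho'\,|dz|\Big)^{\!2}\;\leq\;\int_{\gamma_t}|\varphi|^{1/2}|dz|\,\cdot\,\int_{\gamma_t}\frac{(\rho')^2}{|\varphi|^{1/2}}\,|dz|.
\]
Integrating in the transverse direction using the flat $\varphi$-coordinates on $R_j$ and summing over $j$ yields $\Area^{red}(h')\geq\Area^{red}(h_\varphi)$. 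Equality in Cauchy-Schwarz forces $(\rho')^2=|\varphi|$ pointwise on each ring, hence everywhere on $\dot\Sigma$; this simultaneously delivers minimality and uniqueness of $h_\varphi$ among metrics arising from quadratic differentials, while uniqueness of $\varphi$ itself is the classical Jenkins-Strebel uniqueness theorem.

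Finally I would address continuity and the extension to the boundary. Continuity on $\CM_{0,k}$ follows from a normal-families argument: for $j_n\to j_\infty$ the prescribed residues control the $\varphi_n$ in $L^1_{\mathrm{loc}}$, so after extracting a subsequence $\varphi_n\to\varphi_\infty$ locally, and the uniqueness established above identifies the limit as the Jenkins-Strebel differential at $j_\infty$. The main obstacle, and the most delicate step, is the extension across $\overline\CM_{0,k}\setminus\CM_{0,k}$, which parametrizes nodal stable curves. Near a nodal stratum one works in plumbing coordinates $xy=t_n$ with $t_n\to 0$, and one must verify that the critical graph of $\varphi_n$ stays uniformly away from the vanishing neck for $n$ large, so that the restrictions of $\varphi_n$ to either side of the node stabilize to the Jenkins-Strebel differentials of the two components, and that the $|\varphi_n|$-circumferences on the two sides of the neck agree in the limit. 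This requires monitoring the combinatorial type of the critical graph under degeneration and invoking compactness of the space of measured foliations to identify the limit of $h_{\varphi_n}$ with the Jenkins-Strebel metric on the noded surface built by attaching two semi-infinite cylinders at the node. Once this stability-under-pinching is established, continuous extension to all of $\overline\CM_{0,k}$ follows.
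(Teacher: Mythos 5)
This theorem is stated in the paper as a \emph{cited} result from Zwiebach's work \cite{zwiebach} (with the boundary-extension part from Wolf--Zwiebach \cite{wolf-zwieb}); the paper offers no proof of its own, so there is no internal argument against which to compare yours. Your sketch does follow the standard length--area strategy, but it contains a structural oversimplification that matters. You assert that the critical horizontal trajectories of the Jenkins--Strebel differential partition $\dot\Sigma$ into exactly $k$ ring domains, one around each puncture. That is false in general: for $k\geq 4$ the minimal area metric typically has additional \emph{internal} ring domains, i.e., finite-height cylinders not homotopic to any puncture, exactly as the paper itself records when it says the minimal area surface is ``a finite union of $k$ semi-infinite cylinders and a finite set of cylinders with finite height of circumference $2\pi$.'' Consequently, (i) you cannot simply ``invoke the Jenkins--Strebel existence theorem'' for a differential with a prescribed critical graph, because identifying the correct combinatorial type of the critical graph is itself part of the minimization and is one of the genuinely hard points of Zwiebach's argument; and (ii) your Grötzsch/Cauchy--Schwarz step is applied ring domain by ring domain using the constraint $\int_{\gamma_t}\rho'\,|dz|\geq A_j$, but for an internal ring domain the core curve is not homotopic to any $\gamma_j$, so the paper's admissibility hypothesis gives you no lower bound on its $h'$-length and the Cauchy--Schwarz estimate has nothing to bite on. Zwiebach's formulation uses the stronger systole condition (\emph{every} nontrivial closed curve has length $\geq 2\pi$) precisely to control these internal cylinders, and the passage between the two formulations, together with the variational analysis of the critical-graph combinatorics, is where the real work lies.

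The last paragraph on extension to $\overline{\CM}_{0,k}$ is where you correctly identify the deepest difficulty, but you stop at a programmatic level (``monitor the combinatorial type of the critical graph under degeneration,'' ``invoke compactness of measured foliations''). This extension, including the compatibility of circumferences across a pinching neck and the continuity of the plumbing construction, is the content of the Wolf--Zwiebach paper \cite{wolf-zwieb} and is not something that falls out of normal families plus abstract compactness. Since the paper in question treats this theorem as a black box precisely because its proof is a substantial undertaking, the honest move here is to cite \cite{zwiebach,wolf-zwieb} rather than to reprove it; if you do want to sketch a proof you must at minimum replace ``exactly $k$ ring domains'' with the correct statement, formulate admissibility over all nontrivial curves, and explain how the variational problem selects the combinatorial structure of the critical graph.
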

In other words, the minimal area metric provides a natural slice to the well-known
isomorphism between the set of complex structures and the set of conformal isomorphism classes of
associated metrics, which respects the sewing rule of the degeneration of conformal structures.
A similar representation of the conformal structure on the boundary punctured discs,
the open string analogue of the above theorem, was
used in Fukaya and the author's work \cite{f-oh} in their study of
adiabatic degeneration of pseudo-holomorphic polygons with Lagrangian
boundaries on the cotangent bundle.

It is also shown that each minimal area metric arises from Jenkins-Strebel quadratic differential
\cite{jenkins}, \cite{strebel} whose singularities are at most a pole. Some brief account on
Jenkin-Strebel quadratic differential should be in order.
A quadratic differential $\varphi$ on a Riemann surface
$\Sigma$ is a set of function elements $\phi_i(z_i)$, meromorphic in the local
coordinates $z_i = x_i + i y_i$ with transformation property
\be\label{eq:quadratic}
\phi_i(z_i)(dz_i)^2 = \phi_j(z_j) (dz_j)^2,
\ee
under a change of local coordinates.
A quadratic differential defines a metric $|\phi_i(z_i)| |dz_i|^2$.

A horizontal trajectory of a quadratic differential is a curve along which $\phi(z) (dz)^2$ is real
and positive.

\begin{defn} A Jenkins-Strebel quadratic differential is a quadratic differential for which the
nonclosed trajectories cover a set of measure zero on the surface.
\end{defn}

A JS quadratic differential decomposes a surface into characteristic ring domains, the maximal
ring domains swept by the closed trajectories.
These ring domains can be annuli or punctured discs.

On a punctured discs $D(1) \setminus \{0\}$ with coordinates $w$, the
JS quadratic differential is given by the form
\be\label{eq:JSquad-w}
\phi_{JS}(z)\, dz^2, \quad  \phi_{JS}(z) = - \frac{a^2}{(2\pi)^2} \frac{1}{z^2}
\ee
where $2\pi a$ is the length of the horizontal trajectory of the associated minimal area metric.
The metric is flat and isometric to the semi-infinite tube $(-\infty,0] \times S^1$ with
coordinates $(\tau,t)$ with $u = \tau + i\, t$ and proportional to the standard metric $d\tau^2 + dt^2$.
This is nothing but the canonical isothermal coordinate of the metric and satisfies
\be\label{eq:dz2}
du^2 = - \frac{a^2}{(2\pi)^2} \frac{1}{z^2} dz^2.
\ee

Under the minimal area metric for the case of $g =0$, $\dot \Sigma$ is a finite union of
$k$ semi-infinite cylinders and a finite set of cylinders with finite height of circumference
$2\pi$. So each cylinder is isometric to the standard
cylinders, either $[0, \infty) \times S^1$ or $[0, \ell] \times S^1$ with metric
$$
h = \left(\frac{a}{2\pi}\right)^2(d\tau^2 + dt^2)
$$
where $(\tau,t)$ is the standard coordinates on the cylinder. On each cylinder
it carries the vector field $\frac{\del}{\del \tau}, \, \frac{\del}{\del t}$ which are invariant under the
transformation
$$
(\tau,t) \mapsto (\tau + \tau_0, t+ t_0)
$$
and so depends only on the metric.  Denote by $S \subset \dot\Sigma$ the
union of sewing seams of the set of cylinders given above. Then $\dot \Sigma$
carries a vector field $V = V(j)$ that restrict to the coordinate vector
field $\frac{\del}{\del \tau}$ on each cylinder. As a result $V(j)$ is discontinuous along the
$S$ but its flow lines form a foliation those leaves are continuous even across the seams.
The vector field $V$ is called the vertical vector field and
the associated foliation is called the vertical foliation of the quadratic differential associated to
the minimal area metric \cite{strebel}. Similarly the vector field $\frac{\del}{\del t}$ glues to define
a global vector field $H(j)$ called the horizontal vector field, which is continuous except at
a finite number of points.

We would like to mention that when we give a distinguished
marked point $r_0$ as the `output' and put the rest as the `input' marked points
as in the definition of $A_\infty$-structures as in \cite{f-oh,fooo:book},
the flow of the vector field $V(j)$ becomes an oriented foliation whose
leaves consist of the flow lines of $V(j)$. Then the flow become continuous even across the
seams.

We will also need to consider the case $g= 0$ and $k = 2$. (See \cite{wolf-zwieb} for
the relevant discussion.) In this case,
$\dot \Sigma$ with the minimal area metric is isometric to the standard cylinder
$\R \times S^1$ with the metric $d\tau^2 + dt^2$. While the metric is uniquely determined, its
associated flat coordinates are defined uniquely modulo the translations and rotations
$$
(\tau,t) \mapsto (\tau+\tau_0,t+t_0), \quad \tau_0 \in \R, \, t_0 \in S^1.
$$

\section{Off-shell energy of contact instantons}
\label{sec:offshellenergy}

Fix a K\"ahler metric $h$ on $(\Sigma, j)$.  The norm $|dw|$ of the map
$$
dw:(T\Sigma,h) \to (TQ, g)
$$
with respect to the metric $g$ is defined by
$$
|dw|_g^2 := \sum_{i=1}^{2} {|dw(e_i)|_g}^2,
$$
where  $\{ e_1, e_2 \}$ is an orthonormal frame of $T \Sigma$
with respect to $h$.

The following are the consequences from the definition of
contact Cauchy-Riemann map and the compatibility of $J$ to $d\lambda$ on $\xi$, whose
proofs we omit but refer to \cite{oh-wang1}.

\begin{prop}\label{prop:energy-omegaarea}
Denote $g_J=\omega(\cdot, J \cdot)$ and the associated norm
by $|\cdot| = |\cdot|_J$. Fix a Hermitian metric $h$ of $(\Sigma,j)$,
and consider a smooth  map $u:\Sigma \to M$. Then we have
\begin{enumerate}
\item[(1)] $|d^\pi w|^2 = |\del^\pi w| ^2 + |\delbar^\pi w|^2$,
\item[(2)] $2w^*d\lambda = (-|\delbar^\pi w|^2 + |\del^\pi w|^2) \,dA $
where $dA$ is the area form of the metric $h$ on $\Sigma$.
\item[(3)] $w^*\lambda \wedge w^*\lambda \circ j = |w^*\lambda|^2\, dA$
\item[(4)] $|\nabla w^*\lambda|^2 = |dw^*\lambda|^2 + |\delta w^*\lambda|^2$.
\end{enumerate}
\end{prop}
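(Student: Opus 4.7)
All four identities are pointwise and tensorial, so it suffices to fix a point $z_0 \in \Sigma$, choose an $h$-orthonormal frame $\{e_1,e_2\}$ of $T_{z_0}\Sigma$ with $je_1=e_2$ (hence $je_2=-e_1$), and work in the dual coframe $\{e^1,e^2\}$ with $dA=e^1\wedge e^2$. The structural ingredients I would use are the splitting $dw=d^\pi w + w^*\lambda\, X_\lambda$, the Cauchy--Riemann decomposition $d^\pi w = \del^\pi w+\delbar^\pi w$ in which $\del^\pi w\colon (T\Sigma,j)\to(\xi,J)$ is complex linear and $\delbar^\pi w$ is complex antilinear, and the compatibility identity $d\lambda(\cdot,J\cdot)=g_J$ on $\xi$ with $g_J$ symmetric.

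For (1), I would write $|d^\pi w|^2=\sum_i|\pi dw(e_i)|^2_{g_J}$ and substitute $\pi dw(e_1)=\del^\pi w(e_1)+\delbar^\pi w(e_1)$ together with $\pi dw(e_2)=J\del^\pi w(e_1)-J\delbar^\pi w(e_1)$. The two diagonal terms reproduce $|\del^\pi w|^2+|\delbar^\pi w|^2$, while the off-diagonal terms $2g_J(\del^\pi w(e_i),\delbar^\pi w(e_i))$ carry opposite signs for $i=1,2$ and cancel. For (2), I would use $X_\lambda\rfloor d\lambda=0$ to reduce $w^*d\lambda(e_1,e_2)$ to $d\lambda(\pi dw(e_1),\pi dw(e_2))$, plug in the same expansion, and invoke the identity $d\lambda(a,Jb)=g_J(a,b)$ together with the symmetry $g_J(a,b)=g_J(b,a)$: the mixed terms again cancel and the diagonal terms produce $w^*d\lambda(e_1,e_2)=|\del^\pi w(e_1)|^2-|\delbar^\pi w(e_1)|^2=\tfrac12(|\del^\pi w|^2-|\delbar^\pi w|^2)$, which is the claimed formula after multiplying by $2\,dA$. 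For (3), I would write $w^*\lambda=a_1 e^1+a_2 e^2$, compute $w^*\lambda\circ j$ directly from $je_1=e_2,je_2=-e_1$, and then the wedge product gives $(a_1^2+a_2^2)\,e^1\wedge e^2=|w^*\lambda|^2\, dA$ up to the orientation-dependent sign that is fixed by the conventions in force.

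For (4), which is the step that genuinely requires care, I would pass to isothermal coordinates so that the metric is conformally flat at $z_0$, express $d w^*\lambda$ and $\delta w^*\lambda$ in terms of $\del_i a_j$ and recognise the right-hand side as the sum of the squared antisymmetric and squared trace parts of the $(1,1)$-tensor $\nabla w^*\lambda$. Equivalently, it is the Weitzenb\"ock identity $\Delta_H=\nabla^*\nabla+\operatorname{Ric}$ for $1$-forms integrated against $w^*\lambda$, with the Ricci correction absorbed into a total divergence in real dimension two. The main obstacle is the sign bookkeeping in (2) --- this is the identity that underlies the a priori energy formula and a misplaced sign would reverse the roles of $\del^\pi w$ and $\delbar^\pi w$ --- together with fixing conventions once and for all in (3) and (4): the sign of $\delta$, the normalization of $|d\alpha|^2$, and whether $\alpha\circ j$ or $-\alpha\circ j$ is used all must be pinned down in advance, since otherwise the cross partials in (4) fail to pair as claimed and a divergence term has to be carried explicitly throughout.
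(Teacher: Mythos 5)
The paper does not supply a proof of this proposition---it states, immediately before, ``whose proofs we omit but refer to \cite{oh-wang1}''---so there is no in-paper argument to compare your proposal against. That said, your treatment of (1)--(3) is the standard direct frame computation and it is correct: with $je_1=e_2$, the $(j,J)$-linearity of $\del^\pi w$ and $(j,J)$-antilinearity of $\delbar^\pi w$ give $\pi\, dw(e_2)=J\del^\pi w(e_1)-J\delbar^\pi w(e_1)$, the $J$-invariance of $g_J$ makes the cross terms cancel in (1), and $d\lambda(\cdot,J\cdot)=g_J(\cdot,\cdot)$ together with symmetry of $g_J$ gives (2). For (3), your one-line computation gives $-|w^*\lambda|^2\,dA$ in the convention $dA=e^1\wedge e^2$, and you correctly flag that this sign depends on conventions.

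Item (4) is where there is a genuine gap. The right-hand side $|d\alpha|^2+|\delta\alpha|^2$ (with $\alpha=w^*\lambda$) is \emph{not} equal to $|\nabla\alpha|^2$ pointwise: decompose $\nabla\alpha$ orthogonally into antisymmetric, trace, and symmetric trace-free parts; the right-hand side sees only the first two, and with coefficient $2$ rather than the coefficient $1$ with which they enter $|\nabla\alpha|^2$, while the symmetric trace-free part is absent from the right altogether. Concretely, in flat coordinates with $\alpha=a_1\,dx+a_2\,dy$ one has
\[
|d\alpha|^2+|\delta\alpha|^2-|\nabla\alpha|^2
=2\bigl(\partial_1 a_1\,\partial_2 a_2-\partial_1 a_2\,\partial_2 a_1\bigr),
\]
which is nonzero, e.g.\ for $\alpha=x^2\,dx+y^2\,dy$ at $(1,1)$: $|\nabla\alpha|^2=8$ while $|d\alpha|^2+|\delta\alpha|^2=16$. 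This difference is twice a Jacobian, hence a divergence, so an integrated version of (4) survives on a closed \emph{flat} surface; but on a curved surface the Weitzenb\"ock formula also produces $\int_\Sigma K\,|\alpha|^2\,dA$, which is \emph{not} a total divergence, contrary to what you assert. So the route you sketch does not establish (4) as a pointwise identity, nor even as an integrated one on a general $(\Sigma,h)$; you would need to pin down the norm conventions used in \cite{oh-wang1} or flag that, with the conventions you set up, the identity as written fails.
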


We then introduce the $\xi$-component of the harmonic energy, which we call
the $\pi$-harmonic energy. This energy equals the contact area $\int w^*d\lambda$
`on shell' i.e., for any contact Cauchy-Riemann map, which satisfies $\delbar^\pi w = 0$

\begin{defn}\label{defn:pi-energy}
For a smooth map $\dot \Sigma \to Q$, we define the $\pi$-energy of $w$ by
\be\label{eq:Epi}
E^\pi(j,w) = \frac{1}{2} \int_{\dot \Sigma} |d^\pi w|^2.
\ee
\end{defn}

As discovered by Hofer in \cite{hofer} in the context of symplectization, the $\pi$-harmonic energy
itself is not enough for the crucial bubbling-off analysis needed for the equation
\eqref{eq:contact-instanton}. This is only because the bubbling-off analysis
requires the study of asymptotic behavior of contact instantons on the complex place $\C$.
A crucial difference between the current case of contact instantons from Gromov's theory of
pseudoholomorphic curves on symplectic manifolds is that there is no removal singularity
result of the type of harmonic maps (or pseudoholomorphic maps). Because of this, one
needs to examine the $X_\lambda$-part of energy that controls the asymptotic behavior of
contact instantons near the puncture. For this purpose, the Hofer-type energy
introduced in \cite{hofer} is crucial. In this section, we generalize this energy
to the general context of non-exact case without involving the symplectization.

Following the modification made in \cite{behwz} of Hofer's original definition \cite{hofer} (and denoting
$\varphi = \psi'$ for the function $\psi$ given in section \ref{sec:canonical}), we introduce the following class of
test functions
\begin{defn}\label{defn:CC} We define
\be
\CC =\{\varphi: \R \to \R_{\geq 0} \mid \supp \varphi \, \text{is compact}, \, \int_\R \varphi = 1\}.
\ee
\end{defn}

Let $w:\dot \Sigma \to Q$ be a contact instanton with the asymptotic
charge $Q(p)$ at the puncture. Recall this
number depends only on the homology class $[\gamma]$ of the loop
$\gamma = w|_{D_\delta(p)}(\tau,\cdot) \subset \dot \Sigma \setminus \{p\}$
by the closedness equation of $w^*\lambda\circ j$, which does not depend on $\tau$ either.

Then on the given cylindrical neighborhood $D_\delta(p) \setminus \{p\}$, we can write
$$
w^*\lambda \circ j + Q(p)\, dt = df
$$
for some function $f: [0,\infty) \times S^1 \to \R$. Here $dt$ is the one-form that is made of
the one-form $dt$ defined before on each cylinder. The form is globally continuous except at the finite
number points at which the vector field $\frac{\del}{\del t}$ is not continuous.
We call $f$ the \emph{contact instanton potential}.

We remark that when $w$ is given, the function $f$ on $D_\delta(p) \setminus \{p\}$ is uniquely determined modulo
the shift by a constant.

\begin{defn}[$E_\CC$-energy] Let $w$ satisfy $d(w^*\lambda \circ j) = 0$. Then we define
$$
E_{\CC}(j,w) = \sup_{\varphi \in \CC} \int_\Sigma d(\psi(f)) \wedge df\circ j
= \sup_{\varphi \in \CC} \int_\Sigma d(\psi(f)) \wedge (- w^*\lambda + Q(p)\, d\tau).
$$
\end{defn}
 We note that
$$
d(\psi(f)) \wedge df \circ j = \psi'(f) df \wedge df\circ j= \varphi(f) df \wedge df\circ j \geq 0
$$
and hence we can rewrite $E_{\CC}(j,w)$ into
$$
E_{\CC}(j,w) = \sup_{\varphi \in \CC} \int_\Sigma \varphi(f) df \wedge df\circ j.
$$
\begin{prop}\label{prop:a-independent} For a given smooth map $w$ satisfying $d(w^*\lambda \circ j) = 0$,
we have $E_{\CC;f}(w) = E_{\CC,g}(w)$ whenever $df = w^*\lambda\circ j + Q(p)\, dt = dg$
on $D^2_\delta(p) \setminus \{p\}$ (and so $g(z) = f(z) + c$ for some constant $c$ on each connected component of
$Q$).
\end{prop}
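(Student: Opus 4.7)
The plan is to reduce everything to the observation that the class $\CC$ of test functions is invariant under translations of the real line, and that shifting the potential $f$ by a constant on a given component corresponds to pre-composing $\varphi$ with a translation. Concretely, since $dg = df$ on $\dot\Sigma$, the only dependence of the integrand $\varphi(f)\, df \wedge df \circ j$ on the choice of primitive is through the scalar $\varphi(f)$, which is a pointwise evaluation.

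First I would reduce to a single connected component of the domain. If there is only one constant $c$ with $g = f + c$, then for each $\varphi \in \CC$ I define $\varphi_c \in C^\infty_c(\R)$ by $\varphi_c(s) := \varphi(s - c)$. Clearly $\varphi_c$ has compact support and $\int_\R \varphi_c = \int_\R \varphi = 1$, so $\varphi_c \in \CC$. Then by substituting and using $dg = df$,
\begin{equation*}
\int_\Sigma \varphi_c(g)\, dg \wedge dg \circ j = \int_\Sigma \varphi(g - c)\, df \wedge df \circ j = \int_\Sigma \varphi(f)\, df \wedge df \circ j.
\end{equation*}
Taking $\sup$ over $\varphi \in \CC$ on the left (noting that $\varphi \mapsto \varphi_c$ is a bijection of $\CC$), we obtain $E_{\CC,g}(w) \geq E_{\CC,f}(w)$; by reversing the roles of $f$ and $g$ (using $-c$ instead of $c$) we get equality. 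Thus $E_{\CC,f}(w) = E_{\CC,g}(w)$ in the single-component case.

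For the general case of several connected components of the domain where $f$ and $g$ differ by (possibly distinct) constants $c_1, \dots, c_N$, I would argue componentwise. The integral defining $E_\CC$ splits as a sum of integrals over the components, and after a preliminary shift of $f$ (which, by the one-component case just proven, does not affect $E_{\CC,f}$) one may arrange that the ranges of $f$ on different components are pairwise disjoint compact intervals. Given $\varphi \in \CC$, I can then construct $\widetilde \varphi \in \CC$ that agrees with the appropriate translate $s \mapsto \varphi(s - c_i)$ on a neighborhood of the range of $g$ on the $i$-th component, and with $\int_\R \widetilde\varphi = 1$ preserved by a convex combination (or by using that outside the union of these neighborhoods the value of $\widetilde\varphi$ does not contribute to the integral, since $g$ does not take values there). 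Then $\int \widetilde\varphi(g)\, dg \wedge dg \circ j = \int \varphi(f)\, df \wedge df \circ j$, and the same symmetric argument yields equality of suprema.

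The only subtle point — and hence the main obstacle — is this bookkeeping in the multi-component setting, namely constructing a single global $\widetilde\varphi \in \CC$ that simultaneously realizes the correct translation on each component. This can be handled cleanly by first normalizing $f$ so that the ranges on distinct components are separated, at which stage the construction of $\widetilde\varphi$ is just a matter of gluing translated copies of $\varphi$. Once this is dispensed with, the proposition follows from the elementary shift-invariance argument above.
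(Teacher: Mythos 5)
Your argument is correct and relies on exactly the same key observation as the paper's proof: the class $\CC$ is invariant under precomposition with a translation $s \mapsto s - c$, so shifting the primitive $f$ by a constant can be absorbed into a reparametrization of the test function, and $df$, $df\circ j$ are themselves unaffected. Since $D^2_\delta(p)\setminus\{p\}$ is connected, there is only one constant $c$, so the multi-component gluing argument you add at the end, while sound, is not actually needed for this proposition.
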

\begin{proof}
Certainly $df$ or $df\circ j$ are independent of the addition by constant $c$.
On the other hand, we have
$$
\varphi(g) = \varphi(f + c)
$$
and the function $a \mapsto \varphi(a + c)$ still lie in $\CC$. Therefore after taking
the supremum over $\CC$, we have derived
$$
E_{\CC,f}(j,w) = E_{\CC,g}(j,w).
$$
This finishes the proof.
\end{proof}

This proposition enables us to introduce the following

\begin{defn}[$\lambda$-energy at a puncture $p$]
We denote the common value of $E_{\CC,f}(j,w)$ by $E^\lambda_p(w)$, and call the \emph{$\lambda$-energy at $p$}.
\end{defn}

The following then would be the preliminary definition of the total energy.

\begin{defn}[Total energy] Let $w:\dot \Sigma \to Q$ be any smooth map.
We define the total energy of $w$ by
\be\label{eq:total-energy-sum}
E(j,w) = E^\pi(j,w) + \sum_{l=1}^k E^\lambda_{p_l}(j,w).
\ee
We denote
$$
E^\lambda(j,w) = \sum_{l=1}^k E^\lambda_{p_l}(j,w).
$$
\end{defn}

\begin{rem} \begin{enumerate}
\item To take further analogy with physics, one may regard the $\pi$-harmonic energy
as the `kinetic energy' of the contact instanton and the $\lambda$-energy as
the `potential energy' thereof respectively.
\item The above definition is unsatisfying and incomplete as an off-shell energy of the pair
$(j,w)$ \emph{when we vary complex structure $j$} on the punctured surface $\dot \Sigma$.
For this purpose, we need to involve the complex structure in the definition of $E^\lambda$
also like $E^\pi(j,w)$ does. This is where the Zwiebach's notion of minimal area metric
\cite{zwiebach}, \cite{wolf-zwieb} enters which extends the
cylindrical structure to the full Riemann surface not just to the punctured neighborhoods.
\end{enumerate}
\end{rem}

In the rest of the section, we assume $\Sigma$ has genus 0. The reason for this restriction is
only because for the higher genus case, the minimal area metric representation of conformal
structure is over-counting \cite{zwiebach}. Other than this, the discussion below
is equally applied to any conformal structure represented by a minimal area metric.

First, we assume $k \geq 2$, i.e., the number of marked
points at least 2.  In this case, the conformal structure carries the
minimal area metric representation \cite{zwiebach}. Under the minimal area metric for the case of $g =0$,
$\dot \Sigma$ is a finite union of
$k$ semi-infinite cylinders and a finite set of cylinders with finite height of circumference
$2\pi$. So each cylinder is isometric to the standard
cylinders, either $[0, \infty) \times S^1$ or $[0, \ell] \times S^1$ with metric
$$
h = \left(\frac{a}{2\pi}\right)^2(d\tau^2 + dt^2)
$$
where $(\tau,t)$ is the standard coordinates on the cylinder. On each cylinder
it carries the vector field $\frac{\del}{\del \tau}$ which is invariant under the
transformation
$$
(\tau,t) \mapsto (\tau + \tau_0, t+ t_0)
$$
and so depends only on the metric. Denote by $S \subset \dot\Sigma$ the
union of sewing seams of the set of cylinders given above.
We label the marked points as $\{r_0,\cdots, r_k\}$ for $k \geq 1$ so
that $r_0$ is incoming and the rest are outgoing. Then $\dot \Sigma$
carries a vector field $V = V(j)$ that is rotationally invariant and restricts to the coordinate vector
field $\frac{\del}{\del \tau}$ on each cylinder. (Here `$V$' stands for `vertical' since
the meridian circles are often called `horizontal foliation'.) (See section \ref{sec:jenkins}
and \cite{jenkins,strebel,zwiebach}.)

We can also associate a tree $T$ consisting of the cores of the above cylinders
that is naturally oriented consistently with the unique incoming assignment of
the puncture $r_0$. We denote by $\ell(e)$ the length of the edge $e$ of the
tree. There is also the unique incoming exterior edge incident to $r_0$ and
the unique interior vertex of the exterior edge. We denote by $v^{dist}$ the
unique distinguished interior vertex.

Denote by $Q(r_i) = Q(e_i^{ext})$ the charge at the puncture $r_i$, and assign these numbers to
the exterior edges incident to the punctures respectively. We then associate
charge $Q(e)$ to each interior edge $e$ so that the following balancing
condition holds
\be\label{eq:balanceatv}
\sum_{e \in E(v)} Q(e) = 0
\ee
for all interior vertex $v \in V^{int}(T)$ where $E(v)$ is the set of edges
incident to the vertex $v$. This uniquely determines the charge function
$Q: E(T) \to \R$. Furthermore this balancing condition makes the following
lemma hold.

\begin{lem}\label{lem:Qdt} Consider the current $\sum_{e \in E(T)} Q(e)\, dt_e$, i.e.,
the distributional one-form on $\dot \Sigma$. Then it is closed as a current,
provided \eqref{eq:balanceatv} holds at every interior vertex $v \in V(T)$.
\end{lem}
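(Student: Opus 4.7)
The plan is to verify closedness of $\omega := \sum_{e \in E(T)} Q(e)\, dt_e$ as a current by pairing it with an arbitrary test function $\phi \in C_c^\infty(\dot\Sigma)$, applying Stokes' theorem cylinder-by-cylinder, and then checking that the resulting boundary contributions at each seam cancel thanks to \eqref{eq:balanceatv}. Since $dt_e$ is smooth and closed on the interior of each cylinder $C_e$ of the minimal-area decomposition, the current $\omega$ is already smooth and closed on $\dot\Sigma \setminus S$, so any distributional contribution to $d\omega$ is supported on the union of seam circles $S = \bigsqcup_v S_v$ indexed by the interior vertices $v \in V^{int}(T)$.

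For the main computation, I would use $d(\phi\, dt_e) = d\phi \wedge dt_e$ on each cylinder together with Stokes' theorem to rewrite the distributional pairing as a sum of boundary integrals
$$
\langle d\omega, \phi\rangle \;=\; \pm \sum_{e \in E(T)} Q(e) \int_{\partial C_e} \phi\, dt_e,
$$
the sign depending only on the duality convention. On a semi-infinite cylinder the end at infinity contributes nothing since $\phi$ has compact support, so only the seam boundaries remain. Regrouping the boundary terms by the interior vertex $v$ at which the seam $S_v$ lies, the geometric input is that all of the angular coordinates $t_e$ for edges $e \in E(v)$ parametrize the same oriented circle $S_v$ with the orientation induced from $\dot\Sigma$, and hence their differentials restrict to a single common $1$-form $dt_{S_v}$ on $S_v$. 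The only edge-dependent quantity surviving is the sign $\epsilon_e(v) \in \{\pm 1\}$ that records the induced boundary orientation of $\partial C_e$ at $S_v$ (i.e.\ whether $S_v$ is the $\tau_e = 0$ or $\tau_e = \ell_e$ end of $C_e$), so the computation collapses to
$$
\langle d\omega, \phi\rangle \;=\; \pm \sum_{v \in V^{int}(T)} \Big(\sum_{e \in E(v)} \epsilon_e(v)\, Q(e)\Big) I_v(\phi), \qquad I_v(\phi) := \int_{S_v} \phi\, dt_{S_v}.
$$

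Closedness therefore reduces to verifying the signed vanishing $\sum_{e \in E(v)} \epsilon_e(v)\, Q(e) = 0$ at every interior vertex $v$. The tree $T$ carries a natural orientation (inherited from the designated incoming puncture $r_0$), and the charges $Q(e)$ on interior edges are defined by propagating the prescribed exterior charges $Q(r_i)$ outward from the leaves along this orientation; with the corresponding sign convention on $Q$, the hypothesis \eqref{eq:balanceatv} is precisely the signed Kirchhoff-type identity $\sum_{e \in E(v)} \epsilon_e(v)\, Q(e) = 0$. I expect that the only non-routine step is the bookkeeping of orientations, namely aligning the Stokes signs $\epsilon_e(v)$ induced by the boundary orientations of the cylinders $C_e$ with the orientation convention used to sign the charges $Q(e)$ in \eqref{eq:balanceatv}; once this match is made explicit, the expression above vanishes identically for every $\phi$, which is the claimed closedness of $\omega$ as a current.
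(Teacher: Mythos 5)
The paper does not actually prove this lemma -- it states it immediately after introducing the balancing condition with no argument -- so there is no paper proof to compare against; the question is simply whether your argument is sound.

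Your overall strategy (pair against a test function, apply Stokes cylinder-by-cylinder, collect boundary contributions at each interior vertex, invoke the balance condition) is the right skeleton, and you correctly observe that the only potential contribution to $d\omega$ is concentrated on the sewing seams. However, there is a genuine gap in the step where you assert that ``all of the angular coordinates $t_e$ for edges $e\in E(v)$ parametrize the same oriented circle $S_v$'' so that the vertex contribution factors as $\bigl(\sum_{e\in E(v)}\epsilon_e(v)Q(e)\bigr)\int_{S_v}\phi\,dt_{S_v}$. That factorization only happens if the seam at $v$ is a single circle, which is the case precisely when $v$ is bivalent. At an interior vertex of valence $m\ge 3$, the sewing locus in the minimal-area/Jenkins--Strebel picture is a component $\Gamma_v$ of the critical graph (for a trivalent vertex with all cylinders of circumference $2\pi$, a theta graph), and the boundary circles $\partial_v C_e$ for the various $e\in E(v)$ are \emph{distinct} embedded cycles in $\Gamma_v$, each built from two of the graph arcs and each pair of cycles sharing exactly one arc. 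The integrals $\int_{\partial_v C_e}\phi\,dt_e$ are therefore integrals over different $1$-cycles, and the common factor $I_v(\phi)$ you extract does not exist.

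If you push the computation down to the arcs of $\Gamma_v$ -- writing each $\int_{\partial_v C_e}\phi\,dt_e$ as a signed sum of arc integrals and regrouping by arc rather than by vertex -- the vanishing condition becomes a \emph{pairwise} relation on each shared arc, of the form $Q(e)\,dt_e\big|_g = Q(e')\,dt_{e'}\big|_g$ for the two cylinders $e,e'$ adjacent to that arc. Since $dt_e$ and $dt_{e'}$ restrict to the same line of cotangent directions on $g$ (up to sign determined by the orientation bookkeeping you mention), this forces $Q(e)=\pm Q(e')$ on every arc, which is strictly stronger than the $m$-term Kirchhoff balance $\sum_{e\in E(v)}\epsilon_e(v)Q(e)=0$ once $m\ge 3$. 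For instance, at the single trivalent vertex of a three-punctured sphere, the arc-by-arc conditions together with the balance identity force all three charges to vanish. So at minimum the reduction to ``balance implies the bracketed sum vanishes'' is not justified as written, and you need to confront the arc-level cancellation directly. The paper's surrounding remark that the current is ``smooth away from a finite number of Lipschitz singularities'' is precisely the geometric input that your argument would have to substantiate (continuity of the tangential part of $\omega$ along the arcs); as it stands, neither your proposal nor the paper establishes it, and this looks to me like the one place where a real argument is still needed.
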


We remark that the coordinate $t_e$ defined up to the rotation of $S^1$
can be uniquely determined by assigning a tangent direction at each puncture. But the one-form
$Q(e)\, dt_e$ is well-defined independently of the rotations.  In particular the current
$$
\sum_{e \in E(T)} Q(e)\, dt_e
$$
is smooth away from a finite number of Lipschitz singularities located in the sewing
seams.

Next we associate the charges $Q(w;e)$ of contact instanton $w$ by the integrals
$$
Q(w;e) = - \int_{S^1_e} w^*\lambda \circ j
$$
where $S^1_e$ is a meridian circle of the cylinder associated to the edge $e \in E(T)$.
Then we consider the one-form
$$
w^*\lambda \circ j + \sum_{e \in E(T)} Q(e)\, dt_e
$$
as a current, where $(\tau_e,t_e) \in [0,\ell(e)] \times S^1$ the natural cylindrical coordinates on the cylinder
associated to the edge $e$. By construction this current is exact and so we can solve the distributional equation
$$
w^*\lambda \circ j + \sum_{e \in E(T)} Q(w;e)\, dt_e = df
$$
a priori for some distribution $f$.

\begin{prop} The distribution $f$ is a continuous function on $\dot \Sigma$ which
is smooth away from the singularities mentioned above.
\end{prop}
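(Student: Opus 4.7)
The plan is to construct $f$ by explicit line integration of the one-form
$$
\alpha := w^*\lambda\circ j + \sum_{e\in E(T)} Q(w;e)\, dt_e
$$
and to read off its regularity from that of $\alpha$ itself.

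First, I verify that $\alpha$ is closed as a current. The summand $w^*\lambda\circ j$ is smooth and closed by the contact instanton equation. On the interior of each cylinder, $Q(w;e)\,dt_e$ is a constant multiple of $dt_e$, hence closed there. Across a sewing seam at an interior vertex $v$, Stokes applied to a thin tubular neighbourhood $N_v$ of the seam reduces the distributional differential of $\sum_{e} Q(w;e)\,dt_e$ to $\sum_{e\in E(v)} Q(w;e)$ concentrated on the seam. This sum vanishes, as it equals (up to sign) $\int_{\partial N_v} w^*\lambda\circ j=0$ by closedness of $w^*\lambda\circ j$; this is precisely the balance condition \eqref{eq:balanceatv} applied to the charges $Q(w;\cdot)$, i.e.\ Lemma \ref{lem:Qdt} in the present setting.

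Second, using the standing hypothesis that $\alpha$ is exact as a current, I fix a basepoint $z_0\in\dot\Sigma\setminus S$ and set
$$
f(z) := \int_{z_0}^z \alpha
$$
along any piecewise smooth path $\gamma$ from $z_0$ to $z$ meeting $S$ transversally in finitely many points. Along such a $\gamma$ the form $\alpha$ is bounded and piecewise smooth, so the Riemann integral makes sense. Path independence follows from closedness of $\alpha$ for null-homologous loops (Stokes, together with Step~1 to handle the jump across $S$) and from exactness of $\alpha$ for loops representing nontrivial homology classes, so $f$ is well defined. Since $w^*\lambda\circ j$ is smooth and each $dt_e$ is smooth on the interior of its cylinder with only bounded jumps across $S$, we have $\alpha\in L^\infty_{\mathrm{loc}}$, and for $z,z'$ in a compact $K$,
$$
|f(z)-f(z')|\le \|\alpha\|_{L^\infty(K)}\cdot \mathrm{length}(\gamma).
$$
Hence $f$ is locally Lipschitz, in particular continuous on all of $\dot\Sigma$.

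Third, smoothness on $\dot\Sigma\setminus S$ is immediate from the Poincar\'e lemma: near $z\in\dot\Sigma\setminus S$, $\alpha$ is smooth and closed on a small disc $U$ disjoint from $S$, so $U$ carries a smooth primitive $\tilde f$ with $d\tilde f=\alpha|_U$; by uniqueness up to an additive constant, $f-\tilde f$ is constant on $U$, and so $f$ is smooth near $z$. The only genuinely delicate point is justifying well-definedness and path independence of the line integral in the presence of the Lipschitz jumps of $\alpha$ across $S$; this is where exactness of $\alpha$ (not merely closedness) is essential, guaranteeing that $\oint\alpha=0$ also along loops that must cross $S$.
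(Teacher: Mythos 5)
Your proof follows essentially the same blueprint as the paper's: define $f$ by integrating $\alpha=w^*\lambda\circ j+\sum_e Q(w;e)\,dt_e$ along a path from a fixed basepoint, invoke exactness of the current to get path-independence, and read off the regularity of $f$ from that of $\alpha$. Where you go beyond the paper is in actually justifying each step: you verify distributional closedness of $\alpha$ by Stokes on a tubular neighbourhood of a seam (identifying the residue along $S$ with the balance condition $\sum_{e\in E(v)}Q(w;e)=0$, which holds because $w^*\lambda\circ j$ is closed), you derive continuity from a local Lipschitz estimate $|f(z)-f(z')|\le\|\alpha\|_{L^\infty(K)}\mathrm{length}(\gamma)$ using $\alpha\in L^\infty_{\mathrm{loc}}$, and you get smoothness off $S$ from the Poincar\'e lemma. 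The paper instead asserts that by rotational symmetry of the minimal area metric ``$f$ depends only on the coordinate $\tau_e$'' and then says the stated properties ``immediately follow''; as written that rotational-invariance claim is too strong — it holds for the discontinuous summand $\sum_eQ(w;e)\,dt_e$ but not for $w^*\lambda\circ j$, hence not for $f$ — so your argument, which never uses it, is the cleaner one. The only place you should be a little more careful is the distributional-closedness step: the seams are not clean circles around a single vertex (they are pieces of meridians of several cylinders glued along a graph), so the residue computation requires a bit more bookkeeping than a single Stokes application around $v$; this is at the same level of rigour as the paper's Lemma~\ref{lem:Qdt}, so it is not a real gap, but it is worth flagging.
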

\begin{proof}
By the property of the minimal area metric which is rotationally symmetric on each
cylinder, the function $f$ depends only on the coordinate
$\tau_e$ and can be uniquely determined by the integral formula
$$
f(z) = \int_{v^{dist}}^z \left(w^*\lambda \circ j + \sum_{e \in E(T)} Q(w;e)\, dt_e\right)
$$
and setting the normalization condition
\be\label{eq:normalizedf}
f(v^{dist}) = 0.
\ee
This integral is path-independent by the exactness of the current and so is well-defined. All the properties
stated then immediately follows from the expression of $f$.
\end{proof}

This function $f$ seems to deserve a name.

\begin{defn}[Contact instanton potential] We call the above normalized function $f$
the \emph{contact instanton potential} of the contact instanton charge form $w^*\lambda \circ j$.
\end{defn}

If $\dot \Sigma$ carries only one puncture, $\dot \Sigma \cong \C$ and so cannot
carry the above minimal area representation but in this case
the closed form $w^*\lambda \circ j$ is automatically exact.
Therefore there exists a function $f: \dot \Sigma \to \R$ such that $w^*\lambda \circ j = df$
in which case we may regard the pair $(f,w)$ as a pseudoholomorphic map to the
symplectization as in \cite{hofer}.

Now we define the final form of the off-shell energy.
Let $w:\dot \Sigma \to Q$ be any smooth map.
We define the total energy of $w$ by
\be\label{eq:total-energy}
E(j,w) = E^\pi(j,w) + E^\lambda(j,w)
\ee
We define
\be\label{eq:finallambdaenergy}
E^\lambda(j,w) = \sup_{\varphi \in \CC} \int_\Sigma d(\varphi(f)) \wedge df\circ j.
\ee
This energy will be used in our construction of the compactification of moduli space of contact
instantons of genus 0 in a sequel. In the rest of the paper, we suppress $j$ from the arguments of
the energy $E(j,w)$ and just write $E(w)$.

\section{Contact instantons on the plane}
\label{subsec:onC}

As in Hofer's bubbling-off analysis in pseudo-holomorphic curves on symplectization \cite{hofer}, it
turns out that study of contact instantons on the plane plays a crucial role in the bubbling-off
analysis of contact instantons too.

We recall the following useful lemma from \cite{hofer-viterbo} whose proof we refer thereto.

\begin{lem}\label{lem:Hoferslemma} Let $(X,d)$ be a complete metric space, $f: X \to \R$ be a
nonnegative continuous function, $x \in X$ and $\delta > 0$. Then there exists $y \in X$ and
a positive number $\epsilon \leq \delta $ such that
$$
d(x,y) < 2 \delta, \, \max_{B_y(\epsilon)} f \leq 2 f(y), \, \epsilon\, f(y) \geq \delta f(x).
$$
\end{lem}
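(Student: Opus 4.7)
The plan is to argue by contradiction via a classical iteration in the spirit of Ekeland's variational principle. Suppose the conclusion fails. Then for every pair $(y,\epsilon)$ with $y \in X$, $0 < \epsilon \leq \delta$, $d(x,y) < 2\delta$, and $\epsilon f(y) \geq \delta f(x)$, one must have $\sup_{B_y(\epsilon)} f > 2 f(y)$, so there exists some $y' \in B_y(\epsilon)$ with $f(y') > 2 f(y)$. I would use this repeatedly to produce a Cauchy sequence along which $f$ blows up, contradicting continuity.

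Concretely, I set $y_0 := x$ and $\epsilon_n := \delta / 2^n$, and build the sequence by choosing $y_{n+1} \in B_{y_n}(\epsilon_n)$ with $f(y_{n+1}) > 2 f(y_n)$ at each step. The inductive hypotheses to verify before invoking the negated statement at step $n$ are: (i) $d(x, y_n) < \sum_{k=0}^{n-1}\delta/2^k < 2\delta$ by the triangle inequality and geometric summation; (ii) $\epsilon_n \leq \delta$ by construction; and (iii) $\epsilon_n f(y_n) \geq \delta f(x)$. For (iii), when $f(x) > 0$ a straightforward induction on the doubling gives $f(y_n) \geq 2^n f(x)$ and hence $\epsilon_n f(y_n) \geq \delta f(x)$; when $f(x) = 0$ the inequality is automatic.

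The bound $d(y_n, y_{n+1}) < \delta/2^n$ makes $(y_n)$ Cauchy, so by completeness of $(X,d)$ it converges to some $y_\infty \in \overline{B_x(2\delta)}$. The doubling $f(y_{n+1}) > 2 f(y_n)$ combined with $f(y_1) > 2 f(x) \geq 0$ (which forces $f(y_1) > 0$ even in the degenerate case $f(x) = 0$) yields $f(y_n) \to \infty$, contradicting the fact that continuity of $f$ at $y_\infty$ forces $f(y_n) \to f(y_\infty) < \infty$. Hence the assumed failure of the conclusion is impossible, proving the lemma.

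The main (and only nontrivial) obstacle is the careful propagation of condition (iii) along the iteration, especially to confirm that the degenerate case $f(x) = 0$ does not stall the doubling — this is handled by observing that the very first iteration already forces $f(y_1) > 0$, after which the standard geometric growth takes over. Everything else is routine triangle-inequality bookkeeping.
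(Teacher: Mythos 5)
Your proof is correct. The paper itself does not reprove this lemma -- it cites Hofer--Viterbo for it -- and your argument is essentially the standard one found there (and, for instance, in McDuff--Salamon): negate the conclusion, iterate with radii $\epsilon_n = \delta/2^n$ and the doubling $f(y_{n+1}) > 2 f(y_n)$, verify the invariant $\epsilon_n f(y_n) \geq \delta f(x)$ so that each step is legitimate, use completeness to extract a limit $y_\infty$, and contradict continuity of $f$ at $y_\infty$. The only genuinely delicate point is the degenerate case $f(x)=0$, and you handle it correctly by noting that condition (iii) is then automatic for all $n$ and that the very first iteration already produces $f(y_1)>0$, after which the geometric growth takes over.
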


For this purpose, we start with a proposition which is an analog to
Theorem 31 \cite{hofer}. Our proof is a slight modification and some simplification
of Hofer's proof
of Theorem 31 \cite{hofer} in our generalized context.

\begin{prop}\label{prop:C^1}
Let $w: \C \to Q$ be a solution of \eqref{eq:contact-instanton}. Regard $\infty$ as
a puncture of $\C  = \C P^1\setminus \{\infty\}$. Suppose $|dw|_{C^0} < \infty$ and
\be\label{eq:asymp-densitybound}
E^\pi(w)=0, \quad  E^\lambda_{\infty}(w) < \infty.
\ee
Then $w$ is a constant map.
\end{prop}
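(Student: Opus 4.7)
The strategy is to reduce the problem to a Liouville-type statement for harmonic functions on $\C$ and then derive a contradiction from finiteness of $E^\lambda_\infty(w)$. First, $E^\pi(w) = \tfrac12\int_\C |d^\pi w|^2 = 0$ forces $d^\pi w \equiv 0$, so $dw$ takes values in $\R X_\lambda$ at every point. Consequently the image of $w$ lies in a single integral curve of $X_\lambda$; lifting to the universal cover of the Reeb orbit if it happens to be closed (and using simple connectedness of $\C$), we may write
\begin{equation*}
w(z) = \phi^{h(z)}_{X_\lambda}(q_0),
\end{equation*}
for a smooth function $h:\C\to\R$, where $q_0 = w(0)$ and $\phi^t_{X_\lambda}$ denotes the Reeb flow. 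It follows immediately that $dw = dh\cdot X_\lambda$ and hence $w^*\lambda = dh$.

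Next I translate the closedness equation $d(w^*\lambda\circ j) = 0$ into a PDE for $h$. In the standard conformal coordinate $z = x+iy$ on $\C$, a direct calculation gives $d(dh\circ j) = -\Delta h\,dx\wedge dy$, so $h$ is harmonic. The hypothesis $|dw|_{C^0}<\infty$, together with $|X_\lambda| = 1$ in the triad metric and $d^\pi w = 0$, yields $|dw|^2 = |dh|^2$, so $|\nabla h|$ is uniformly bounded on $\C$. The classical Liouville theorem applied to the bounded harmonic vector field $\nabla h$ forces $\nabla h$ to be constant, so
\begin{equation*}
h(x,y) = ax + by + c.
\end{equation*}
The whole argument now reduces to showing $a = b = 0$.

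For this I compute $E^\lambda_\infty(w)$. Since $\C$ is simply connected and $w^*\lambda\circ j$ is closed, it is exact and the asymptotic charge $Q(\infty) = 0$ vanishes automatically. From $w^*\lambda = a\,dx + b\,dy$ a direct calculation gives $w^*\lambda\circ j = b\,dx - a\,dy$, so that a contact instanton potential is $f = bx - ay$ (up to an additive constant). By the pointwise identity in Proposition~\ref{prop:energy-omegaarea}(3) applied to $df$ in place of $w^*\lambda$, one has $df\wedge df\circ j = |df|^2\,dA = (a^2+b^2)\,dA$, whence for every $\varphi\in\CC$,
\begin{equation*}
\int_\C \varphi(f)\,df\wedge df\circ j = (a^2+b^2)\int_\C \varphi(f(x,y))\,dx\,dy.
\end{equation*}
If $(a,b)\neq(0,0)$, then changing variables to $(u,v) = (bx-ay,\,ax+by)$, whose Jacobian determinant has absolute value $a^2+b^2$, yields
\begin{equation*}
\int_\C \varphi(f)\,dA = \frac{1}{a^2+b^2}\int_\R \varphi(u)\,du\int_\R dv = +\infty
\end{equation*}
for any $\varphi\in\CC$, contradicting $E^\lambda_\infty(w)<\infty$. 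Hence $(a,b) = (0,0)$, so $h$ is constant, $dw = dh\cdot X_\lambda \equiv 0$, and $w$ is constant.

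The only conceptually nontrivial step is the reduction to an affine harmonic function $h$ in the first two paragraphs; the divergence in the last paragraph is essentially automatic, reflecting the fact that the level sets of a nontrivial linear potential $f$ are straight lines of infinite length. This is precisely the geometric feature of the $\lambda$-energy that detects information the $\pi$-harmonic energy cannot see.
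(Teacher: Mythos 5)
Your proof is correct and follows essentially the same route as the paper: from $E^\pi=0$ you get $d^\pi w=0$, hence $dw = w^*\lambda\,X_\lambda$ with $w^*\lambda = dh$ exact on $\C$; the closedness equation makes $h$ harmonic, the $C^0$-bound on $dw$ makes $\nabla h$ bounded, and Liouville forces $h$ affine. The one place you go further than the paper is the last step: the paper simply refers the reader to Hofer's Lemma~28 for the deduction that a nonzero linear $h$ is incompatible with $E^\lambda_\infty(w)<\infty$, whereas you carry out the computation explicitly — writing $f = bx-ay$, using $df\wedge df\circ j = |df|^2\,dA$ and the linear change of variables with Jacobian $a^2+b^2$ to show the sup defining $E^\lambda_\infty$ is $+\infty$ whenever $(a,b)\neq(0,0)$. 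That computation is correct (up to a sign convention in $\alpha\circ j$ which does not affect the argument since the integrand is nonnegative by construction) and makes the paper's citation self-contained.
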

\begin{proof} From the equality $|d^\pi w|^2\, dA = d(w^*\lambda)$ and the hypothesis $E^\pi(w) = 0$ imply
$|d^\pi w|^2 = 0 = d(w^*\lambda)$ in addition to $d(w^*\lambda \circ j) = 0$.
Therefore we derive that $d^\pi w = 0$. This implies
$$
dw = w^*\lambda\, X_\lambda(w)
$$
with $w^*\lambda$ a bounded harmonic one-form. The boundedness of $w^*\lambda$ follows from the hypothesis
$|dw|_{C^0} < \infty$. Since $\C$ is connected, the image of $w$ must be contained
in a single leaf of Reeb foliation. We parameterize the leaf by $\gamma: \R \to Q$,
$\gamma = \gamma(t)$.

Then there is a smooth function $b = b(z)$ such that
$$
w(z) = \gamma(b(z)).
$$
Since $w^*\lambda$ is exact on $\C$, $w^*\lambda = db$ for some function $b$. Since we also have $d(w^*\lambda\circ j) = 0$,
$$
d(db \circ j) = 0
$$
i.e., $b: \C \to \R$ is a harmonic function and hence $b$ is the imaginary part of a holomorphic
function $f$, i.e., $f(z) = a(z) + ib(z)$. Since $b$ has bounded gradient. the gradient of $f$
is also bounded on $\C$. Therefore $f(z) = \alpha z + \beta$ for some constants $\alpha, \, \beta \in \C$.

Once this is achieved, the rest of the argument is exactly the same as Hofer's proof of Lemma 28 \cite{hofer}
via the usage of the $\lambda$-energy bound $E_\infty^\lambda(w) < \infty$ and so omitted.
\end{proof}

Using the above proposition, we prove the following fundamental result.

\begin{thm}\label{thm:C1bound} Let $w: \C \to Q$ be a solution of \eqref{eq:contact-instanton}.
Suppose
$$
E(w) = E^\pi(w) + E^\lambda_\infty(w) < \infty.
$$
Then $|dw|_{C^0} < \infty$.
\end{thm}
\begin{proof}
Suppose to the contrary that $|dw|_{C^0} = \infty$ and let $z_\alpha$ be a blowing-up
sequence. We denote $R_\alpha = |dw(z_\alpha)| \to \infty$. Then by applying Lemma \ref{lem:Hoferslemma},
we can choose another such sequence $z_\alpha'$ and $\epsilon_\alpha \to 0$ such that
\be\label{eq:blowingup-sequence}
|dw(z_\alpha')| \to \infty, \quad \max_{z \in D_{\epsilon_\alpha(z_\alpha')}}|dw(z)| \leq 2 R_\alpha,
\quad \epsilon_\alpha R_\alpha \to 0.
\ee
We consider the re-scaling maps $\widetilde w_\alpha: D^2_{\epsilon_\alpha R_\alpha}(0) \to Q$
defined by
$$
w_\alpha(z) = w \left(z_\alpha' + \frac{z}{R_\alpha}\right).
$$
Then we have
$$
|d w_\alpha|_{C^0; \epsilon_\alpha R_\alpha} \leq 2, \quad |d w_\alpha(0)|=1.
$$
Applying Ascoli-Arzela theorem, there exists a continuous map $w_\infty: \C \to Q$ such that
$ w_\alpha \to w_\infty$ uniformly on compact subsets. Then by the a priori $W^{k,2}$-estimates,
Theorem \ref{thm:Wk2},
the convergence is in compact $C^\infty$ topology and $w_\infty$ is smooth. Furthermore
$w_\infty$ satisfies $\delbar^\pi w_\infty = 0 = d(w_\infty^*\lambda \circ j) = 0$, $E^\lambda(w_\infty) \leq E(w) < \infty$
and
$$
|d w_\infty|_{C^0; \C} \leq 2, \quad |dw_\infty(0)|=1.
$$

On the other hand, by the finite $\pi$-energy hypothesis and density identity $|d^\pi w|^2 \, dA = d(w^*\lambda)$,
we derive
\beastar
0 & = & \lim_{\alpha \to \infty} \int_{D_{\epsilon_\alpha}(z_\alpha')} d(w^*\lambda) =
\lim_{\alpha \to \infty} \int_{D_{\epsilon_\alpha R_\alpha}(z_\alpha')} d( w_\alpha^*\lambda)\\
&= & \lim_{\alpha \to \infty} \int_{D_{\epsilon_\alpha R_\alpha}(z_\alpha')} |d^\pi \widetilde w_\alpha|^2
= \int_\C |d^\pi w_\infty|^2.
\eeastar
 Therefore we derive
$$
E^\pi(w_\infty) = 0.
$$
Then Proposition \ref{prop:C^1} implies $w_\infty$ is a constant map which contradicts to
$|dw_\infty(0)| = 1$. This finishes the proof.
\end{proof}

An immediate corollary of this theorem and Proposition \ref{prop:C^1} is the following

\begin{cor}\label{cor:pi-positive} For any non-constant contact instanton $w: \C \to Q$ with the
energy bound $E(w) < \infty$, we obtain
$$
E^\pi(w) = \int z^*\lambda > 0
$$
for $z = \lim_{R \to \infty} w(R e^{2\pi it})$. In particular $E^\pi(w) \geq T_\lambda > 0$.
\end{cor}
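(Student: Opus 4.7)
The plan is to chain together three ingredients: the density identity on-shell, Stokes' theorem applied on growing discs, and the asymptotic convergence at the puncture $\infty$ from \cite{oh-wang2}; Proposition~\ref{prop:C^1} then supplies the strict positivity. First I would observe that on any contact instanton $|\delbar^\pi w| = 0$, so Proposition~\ref{prop:energy-omegaarea}(2) reduces to $2\, w^*d\lambda = |d^\pi w|^2\, dA$, whence
$$
E^\pi(w) \;=\; \tfrac{1}{2} \int_\C |d^\pi w|^2 \, dA \;=\; \int_\C w^*d\lambda.
$$
Since the integrand is nonnegative, the function $R \mapsto \int_{\partial D_R} w^*\lambda$ is monotone nondecreasing by Stokes, and its limit as $R \to \infty$ exists and equals $E^\pi(w)$.

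Next I would invoke the asymptotic convergence result of \cite{oh-wang2}. Its hypotheses $|d^\pi w| \in L^2$ and $\|w^*\lambda\|_{C^0} < \infty$ are both satisfied: the first from $E^\pi(w) < \infty$, the second via Theorem~\ref{thm:C1bound} applied to $E(w) < \infty$, which supplies the full $C^0$-bound on $dw$. A crucial simplification is that $\C$ is simply connected and $w^*\lambda \circ j$ is closed, hence globally exact; therefore the asymptotic charge at $\infty$ vanishes:
$$
Q(\infty) \;=\; -\int_{\del_{\infty}\overline{\C}} w^*\lambda \circ j \;=\; 0.
$$
Consequently the asymptotic limit is a genuine (non-rotating) closed Reeb orbit $z$, arising as a $C^\infty(S^1)$ subsequential limit of $t \mapsto w(Re^{2\pi it})$, and
$$
\int_{S^1} z^*\lambda \;=\; \lim_{R\to\infty} \int_{\partial D_R} w^*\lambda \;=\; E^\pi(w),
$$
which is the asserted equality.

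To close the argument I would apply the contrapositive of Proposition~\ref{prop:C^1}: since $|dw|_{C^0} < \infty$ and $E^\lambda_\infty(w) < \infty$ by hypothesis, non-constancy of $w$ forces $E^\pi(w) > 0$. Hence $z$ is a non-constant closed Reeb orbit and $\int z^*\lambda \in \operatorname{Spec}(Q,\lambda) \setminus \{0\}$, which gives $E^\pi(w) \geq T_\lambda > 0$. The most delicate step is extracting a bona fide Reeb-orbit limit from the family of loops $w|_{\partial D_R}$: the vanishing of $Q(\infty)$ rules out the adiabatic (rotating) asymptote and reduces matters to the framework of \cite{oh-wang2}, where the required $C^\infty$-subconvergence of $w|_{\partial D_R}$ to a closed Reeb orbit is established, so that the identifications $E^\pi(w) = \lim_R \int_{\partial D_R} w^*\lambda = \int_{S^1} z^*\lambda$ actually go through.
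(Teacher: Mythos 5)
Your proof is correct and takes the same route the paper intends: the paper dispatches this corollary in one line as ``an immediate corollary'' of Theorem~\ref{thm:C1bound} and the contrapositive of Proposition~\ref{prop:C^1}, and you supply precisely the missing intermediate steps (the on-shell density identity plus Stokes to identify $E^\pi(w)$ with $\lim_R\int_{\partial D_R}w^*\lambda=\int z^*\lambda$, and the observation that $Q(\infty)=0$ by simple connectedness of $\C$, which is what lets the subsequential asymptotic limit be a fixed loop rather than a spiraling one). The only minor point worth tightening is the phrasing ``the asymptotic limit is a genuine closed Reeb orbit'' before you have established $E^\pi(w)>0$ --- at that stage $z$ could still be a constant point, a case you correctly rule out only afterward via Proposition~\ref{prop:C^1} --- but the deductive order of your argument is sound.
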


Now we have the following refinement of the
asymptotic convergence result from \cite{hofer} and \cite{oh-wang1}. It is a refinement of Theorem 6.3 of \cite{oh-wang1}
in that the derivative
bound $|dw|_{C^0} < \infty$ imposed therein is replaced by the more natural energy bound $E(w) < \infty$.

\begin{thm}[Compare with Theorem 31 \cite{hofer}, Theorem 6.3 \cite{oh-wang2}]\label{thm:subsequence-limit}
Let $\dot \Sigma$ be a punctured Riemann surface
equipped with a K\"ahler metric that is cylindrical around punctures.
Let $w: \dot \Sigma \to Q$ be a solution of \eqref{eq:contact-instanton}.
Let $p$ be a given puncture. Suppose
\be\label{eq:E-bound}
E(w) < \infty.
\ee
Then for any given sequence $R_i \to \infty$, there exists a subsequence,
again denoted by $R_i$, and a map $w_\infty: \R \times S^1 \to Q$ such
that
\begin{enumerate}
\item for any given $K > 0$, $w_i$ defined by $w_i(\tau,t) = w(\tau + \tau_i,t)$
converges to $w_\infty$ uniformly on $[-K, K] \times S^1$,
\item the image of $w_\infty$ is contained in a single leaf of Reeb foliation. Therefore if we fix a
parametrization of this leaf by $\gamma = \gamma(t)$ for $t \in \R$, then
$$
w_\infty(\tau,t) = \gamma(Q(p)\tau + T(p) t).
$$
\end{enumerate}
Furthermore one of the following alternatives holds: Consider
\bea\label{eq:Ta}
T(p) & = & \int w(0,\cdot)^*\lambda\
+ \frac{1}{2} \int_{[0,\infty) \times S^1} |d^\pi w|^2 = \lim_{i \to \infty} \int w(\tau_i,\cdot)^*\lambda\\
Q(p) & = & - \int_{S^1} (w(0,\cdot))^*\lambda \circ j
\eea
\begin{enumerate}
\item
When $T \neq 0$, there exists a Reeb orbit $\gamma$ of period $T$ such that
$$
w_\infty(\tau,t) =  \gamma(Q(p)\tau + T(p) t)
$$
as $i \to \infty$ where $z_{R_i}(t) = w(\tau_i, t)$ at each puncture $p$, and
its period is given by $T$. In this case, $w(\tau_i,\cdot) \to \gamma(T(\cdot))$
as $i \to \infty$.
\item When $T = 0$, $w_\infty(\tau,t) = \gamma(Q(p)\, \tau)$. In this case,
$w(\tau_i,\cdot)$ converges to a point in the leaf.
\end{enumerate}
\end{thm}

Combining Theorem \ref{thm:subsequence-limit} and Theorem \ref{thm:C1bound}, we immediately derive

\begin{cor} Let $w$ be a non-constant contact instanton on $\C$ with
\be\label{eq:C1-densitybound}
E(w) < \infty.
\ee
Then there exists a sequence $R_j \to \infty$ and a Reeb orbit $\gamma$ such that
$z_{R_j} \to \gamma(T(\cdot))$ with $T \neq 0$ and
$$
T = E^\pi(w), \quad Q = \int_z w^*\lambda \circ j = 0.
$$
\end{cor}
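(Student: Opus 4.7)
The plan is to combine Theorem~\ref{thm:C1bound} with Theorem~\ref{thm:subsequence-limit} and then identify the asymptotic invariants $T$ and $Q$ by means of Stokes' theorem on $\C$, exploiting both the simply-connectedness of the plane and the density identity from Proposition~\ref{prop:energy-omegaarea}.

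First, since $E(w) < \infty$, Theorem~\ref{thm:C1bound} yields $|dw|_{C^0} < \infty$. Fix the standard cylindrical end at $\infty \in \C P^1$ via the coordinate $z = e^{2\pi(\tau+it)}$, so that $\{|z| \geq 1\}$ becomes $[0,\infty) \times S^1$. Now all hypotheses of Theorem~\ref{thm:subsequence-limit} (applied at the single puncture $p = \infty$) are satisfied, so there exists a sequence $\tau_j \to \infty$ (equivalently $R_j = e^{2\pi \tau_j} \to \infty$) and a limit map $w_\infty: \R \times S^1 \to Q$ whose image lies in a single Reeb leaf and is of the form $w_\infty(\tau,t) = \gamma(Q\tau + Tt)$ with $T = T(\infty)$ and $Q = Q(\infty)$.

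Second, I identify $Q$ and $T$ explicitly. For $Q$: since $d(w^*\lambda \circ j) = 0$ holds on all of $\C$ and $\C$ is simply connected, Stokes' theorem gives
\begin{equation*}
\int_{\partial D_R(0)} w^*\lambda \circ j \;=\; \int_{D_R(0)} d(w^*\lambda \circ j) \;=\; 0
\end{equation*}
for every $R > 0$, so passing to $R \to \infty$ yields $Q = 0$. For $T$: using $\delbar^\pi w = 0$ together with Proposition~\ref{prop:energy-omegaarea}(2), we have $dw^*\lambda = \tfrac{1}{2}|d^\pi w|^2\, dA \geq 0$, hence
\begin{equation*}
\int_{\partial D_R(0)} w^*\lambda \;=\; \int_{D_R(0)} dw^*\lambda \;\nearrow\; \tfrac{1}{2}\int_{\C}|d^\pi w|^2 \;=\; E^\pi(w)
\end{equation*}
as $R \to \infty$. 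Since, in the cylindrical coordinates, $\int_{\{\tau_j\} \times S^1} w^*\lambda = \int_{\partial D_{R_j}(0)} w^*\lambda$, the limit formula in Theorem~\ref{thm:subsequence-limit} gives $T = E^\pi(w)$.

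Finally, $T \neq 0$ follows immediately: $w$ is non-constant, so Corollary~\ref{cor:pi-positive} gives $E^\pi(w) \geq T_\lambda > 0$, hence $T = E^\pi(w) > 0$. This places us in case (1) of Theorem~\ref{thm:subsequence-limit}, so along the chosen subsequence $z_{R_j}(\cdot) = w(\tau_j,\cdot)$ converges to the Reeb orbit $\gamma(T\,\cdot)$, completing the proof. The only mildly non-routine point is the Stokes identification $T = E^\pi(w)$, which hinges on the fact that $dw^*\lambda$ is pointwise non-negative for contact Cauchy-Riemann maps and thus integrates with the correct sign over expanding discs; everything else is bookkeeping.
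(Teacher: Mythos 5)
Your proof is correct and uses the same essential ingredients as the paper — Theorem~\ref{thm:subsequence-limit}, Stokes' theorem on expanding discs, and Corollary~\ref{cor:pi-positive}. The paper's own proof is a pure contradiction argument for $T \neq 0$ (assume $T = 0$, conclude $\int_{\{\tau=\tau_i\}} w^*\lambda \to 0$, hence $E^\pi(w) = 0$, contradicting Corollary~\ref{cor:pi-positive}) and leaves the identities $Q = 0$ and $T = E^\pi(w)$ implicit; you instead establish those identities directly — $Q = 0$ by Stokes and simple-connectedness of $\C$, and $T = E^\pi(w)$ by Stokes together with the pointwise nonnegativity $dw^*\lambda = \tfrac12|d^\pi w|^2\,dA$ from Proposition~\ref{prop:energy-omegaarea}(2) — and then read off $T > 0$ from Corollary~\ref{cor:pi-positive}. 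This is a cleaner, direct organization of the same argument and actually verifies more of the statement than the paper's proof writes out.
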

\begin{proof} If $T = 0$, the above theorem shows that there exists a sequence
$\tau_i \to \infty$ such that $w(\tau_i,\cdot)$ converges to a constant in $C^\infty$
topology and so
$$
\int_{\{\tau = \tau_i\}} w^*\lambda \to 0
$$
as $i \to \infty$. By Stokes' formula, we derive
$$
\int_{D_{e^{\tau_i}}(0)} w^*d\lambda = \int_{\tau = \tau_i} w^*\lambda \to 0.
$$
On the other hand, we have
$$
E^\pi(w) = \lim_{i \to \infty} \int_{D_{e^{\tau_i}}(0)} |d^\pi w|^2
 = \lim_{i \to \infty}  \int_{D_{e^{\tau_i}}} w^*d\lambda = 0.
$$
This contradicts to Corollary \ref{cor:pi-positive}, which finishes the proof.
\end{proof}

The following is the analog to Proposition 30 \cite{hofer}.

\begin{cor}\label{cor:C^1oncylinder} Let $w$ be a contact instanton on $\R \times S^1$ with $E(w) < \infty$.
Then $|dw|_{C^0} < \infty$.
\end{cor}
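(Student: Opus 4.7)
\textbf{Proof plan for Corollary \ref{cor:C^1oncylinder}.}
The plan is to mimic the proof of Theorem \ref{thm:C1bound} verbatim, replacing the role of $\C$ in the domain by $\R \times S^1$. The only thing that changes is the ambient surface on which the blow-up point lives; the rescaled bubble is still a map from a family of disks $D_{R_\alpha \epsilon_\alpha}(0) \subset \C$ with radii tending to infinity, and the limit bubble will be a contact instanton on the full plane $\C$ to which Proposition \ref{prop:C^1} directly applies.

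Concretely, I would argue by contradiction: assume $|dw|_{C^0}=\infty$ and pick a sequence $z_\alpha \in \R \times S^1$ with $|dw(z_\alpha)| \to \infty$. Apply Lemma \ref{lem:Hoferslemma} to the function $|dw|$ with scale parameter $\delta_\alpha = 1/\sqrt{|dw(z_\alpha)|}$, so as to produce a new sequence $z'_\alpha$ and radii $\epsilon_\alpha\le \delta_\alpha$ satisfying
\[
R_\alpha := |dw(z'_\alpha)| \to \infty,\qquad \max_{D_{\epsilon_\alpha}(z'_\alpha)}|dw| \le 2R_\alpha,\qquad \epsilon_\alpha R_\alpha \to \infty,\qquad \epsilon_\alpha \to 0.
\]
Since $\epsilon_\alpha \to 0$, eventually $D_{\epsilon_\alpha}(z'_\alpha)$ sits inside an honest disk in $\R \times S^1$ (injectivity radius $\pi$), so the rescaled map $w_\alpha(z) := w\bigl(z'_\alpha + z/R_\alpha\bigr)$ is well-defined on $D_{\epsilon_\alpha R_\alpha}(0)$, satisfies $|dw_\alpha|_{C^0}\le 2$ and $|dw_\alpha(0)|=1$, and still solves the contact instanton system \eqref{eq:contact-instanton} by conformal invariance of the equation. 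By Ascoli-Arzela and the $W^{k,2}$-estimates (Theorem \ref{thm:Wk2}), a subsequence converges in $C^\infty_{loc}$ to a smooth contact instanton $w_\infty: \C \to Q$ with $|dw_\infty|_{C^0}\le 2$ and $|dw_\infty(0)|=1$.

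To apply Proposition \ref{prop:C^1} and conclude that $w_\infty$ is constant (contradicting $|dw_\infty(0)|=1$), I need (i) $E^\pi(w_\infty)=0$ and (ii) $E^\lambda_\infty(w_\infty)<\infty$. For (i), I invoke the on-shell identity $|d^\pi w|^2\,dA = d(w^*\lambda)$ and conformal invariance of the $L^2$-norm of a $(1,0)$-form in dimension two:
\[
E^\pi(w_\infty) = \lim_{\alpha\to\infty} \int_{D_{\epsilon_\alpha R_\alpha}(0)} |d^\pi w_\alpha|^2 = \lim_{\alpha\to\infty} \int_{D_{\epsilon_\alpha}(z'_\alpha)} |d^\pi w|^2 = 0,
\]
where the last equality uses $\epsilon_\alpha\to 0$ together with absolute continuity of the integral, given that $E^\pi(w)<\infty$ is part of the hypothesis $E(w) < \infty$.

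The main obstacle, and the part that warrants the closest attention, is item (ii): controlling the $\lambda$-energy of the bubble $w_\infty$. The definition of $E^\lambda$ in \eqref{eq:finallambdaenergy} is set up relative to the contact instanton potential $f$ produced from the minimal area metric on $\dot \Sigma$, which depends on the conformal structure of the domain and therefore does not transport in a transparent way from $\R\times S^1$ to the blown-up $\C$. However, on the bubble domain $\C$ (a once-punctured sphere) the closed form $w_\infty^*\lambda\circ j$ is automatically exact, so a potential $f_\infty$ exists unconditionally, and the integral $\int_\C \varphi(f_\infty)\, df_\infty \wedge df_\infty\circ j$ is a conformally invariant $2$-form integral. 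The task is therefore to show, by choosing the cut-off functions $\varphi$ carefully and using that the rescaling maps $z\mapsto z'_\alpha + z/R_\alpha$ are conformal, that $\int_{D_{\epsilon_\alpha R_\alpha}(0)} \varphi(f_\alpha)\, df_\alpha \wedge df_\alpha \circ j$ is bounded by $E^\lambda(w)$ (up to a universal factor coming from the choice of test function class $\CC$). Granting this uniform bound, passing to the $C^\infty_{loc}$ limit gives $E^\lambda_\infty(w_\infty)\le C\cdot E^\lambda(w)<\infty$, Proposition \ref{prop:C^1} applies, and we obtain the desired contradiction.
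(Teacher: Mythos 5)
Your proposal is precisely the paper's argument: the corollary is proved there with the single sentence that one ``apply the same kind of bubbling-off argument as that of Theorem \ref{thm:C1bound}'', and your write-up reconstructs exactly that, including the localization inside a Euclidean disc of the cylinder and the reduction to Proposition \ref{prop:C^1} on the bubble plane. The $\lambda$-energy subtlety you flag is real, but it is equally unaddressed in the paper's own proof of Theorem \ref{thm:C1bound}, where $E^\lambda(w_\infty) \leq E(w) < \infty$ is simply asserted after the Ascoli--Arzela step, so you have not introduced a new gap; your observation that the bubble plane $\C \cong \C P^1 \setminus \{\infty\}$ has a single puncture, hence vanishing charge by Proposition \ref{prop:balancing} and an unconditional global potential, is exactly the right entry point for a careful version of that step.
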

\begin{proof} As in Hofer's proof of Proposition 30 \cite{hofer},
we apply the same kind of bubbling-off argument as that of Theorem \ref{thm:C1bound}
and derive the same conclusion. For readers' convenience, we provide the details of
the proof in Appendix \ref{append:C^1oncylinder}.
\end{proof}

\section{Bubbling-off analysis and the period-gap theorem }
\label{sec:e-regularity}

We recall from \cite{oh-wang2} that the local a priori $W^{k,2}$-regularity estimates are established with respect to
the bounds of $\|dw\|_{L^4}$ and $\|dw\|_{L^2}$. Therefore in addition to the local a priori $W^{k,2}$-regularity estimates,
one should establish another crucial ingredient, the $\epsilon$-regularity result, for the
study of moduli problem as usual in any of conformally invariant geometric non-linear PDE's.
This will in turn establish the $W^{1,p}$-bound
with $p > 2$ (say $p = 4$) appears in many problems in geometry and physics
under the suitable smallness hypothesis on the relevant energy. (See \cite{sacks-uhlen}.)

In the current setting of contact instanton map, it is not obvious what would be
the precise form of relevant $\epsilon$-regularity statement is.
We formulate this $\epsilon$-regularity theorem in the setting of contact instantons.
It turns out that the relevant energy is the $\pi$-harmonic energy.

\begin{defn} Let $\lambda$ be a contact form of contact manifold $(Q,\xi)$.
Denote by $\frak R eeb(Q,\lambda)$ the set of closed Reeb orbits.
We define $\operatorname{Spec}(Q,\lambda)$ to be the set
$$
\operatorname{Spec}(Q,\lambda) = \left\{\int_\gamma \lambda \mid \lambda \in \frak Reeb(Q,\lambda)\right\}
$$
and call the \emph{action spectrum} of $(Q,\lambda)$. We denote
$$
T_\lambda: = \inf\left\{\int_\gamma \lambda \mid \lambda \in \frak Reeb(Q,\lambda)\right\}.
$$
\end{defn}

We set $T_\lambda = \infty$ if there is no closed Reeb orbit.

The following is a standard lemma in contact geometry

\begin{lem} Let $(Q,\xi)$ be a closed contact manifold. Then
$\operatorname{Spec}(Q,\lambda)$ is either empty or a countable nowhere dense subset of $\R_+$
and $T_\lambda > 0$. Moreover the subset
$$
\operatorname{Spec}^{K}(Q,\lambda) = \operatorname{Spec}(Q,\lambda) \cap (0,K]
$$
is finite for each $K> 0$.
\end{lem}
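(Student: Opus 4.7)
The plan is to establish the three assertions in order — positivity of $T_\lambda$, finiteness of $\operatorname{Spec}^K$ (which implies closedness and discreteness), and countability/nowhere-density — using three standard ingredients: a flow-box argument exploiting non-vanishing of $X_\lambda$, Arzel\`a--Ascoli applied to parametrized closed orbits, and invariance of the contact action along smooth deformations of closed Reeb orbits, which is where the hypothesis $\iota_{X_\lambda} d\lambda = 0$ enters.

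First I would prove $T_\lambda > 0$. Since $X_\lambda$ is nowhere zero on the compact manifold $Q$, at each $p\in Q$ one can straighten it by flow-box coordinates $(s,y)\in(-\epsilon_p,\epsilon_p)\times V_p$ on a neighborhood $U_p$, in which $X_\lambda=\partial/\partial s$. Any integral curve of $X_\lambda$ starting in $\{0\}\times V_p$ moves linearly in $s$ and must exit $U_p$ before closing up, so any closed orbit meeting this slice has period at least $\epsilon_p$. A finite subcover $\{U_{p_i}\}$ of $Q$ produces $\epsilon:=\min_i \epsilon_{p_i}>0$, a uniform lower bound for the period of any closed Reeb orbit; hence $T_\lambda\geq \epsilon>0$.

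Next I would establish that $\operatorname{Spec}^K$ is closed in $(0,K]$ by Arzel\`a--Ascoli. Given $T_n\to T_\infty$ in $\operatorname{Spec}^K$ with closed orbits $\gamma_n:\R/T_n\Z\to Q$ of period $T_n$, rescale to $\widetilde\gamma_n:[0,1]\to Q$ by $\widetilde\gamma_n(u):=\gamma_n(u T_n)$; then $|d\widetilde\gamma_n/du|_g=T_n\leq K$ in the triad metric, so the family is uniformly Lipschitz. Compactness of $Q$ and Arzel\`a--Ascoli yield a uniformly convergent subsequence with limit $\widetilde\gamma_\infty$, and continuous dependence on data for the ODE $d\widetilde\gamma/du=T\,X_\lambda(\widetilde\gamma)$ upgrades this to $C^\infty$ convergence, exhibiting $\widetilde\gamma_\infty$ as a closed Reeb orbit of action $T_\infty$. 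Thus $T_\infty\in\operatorname{Spec}(Q,\lambda)$, and $\operatorname{Spec}^K$ is closed.

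The main obstacle is the finiteness of $\operatorname{Spec}^K$, which genuinely goes beyond closedness. Suppose for contradiction that pairwise distinct $T_n\in\operatorname{Spec}^K$ accumulate at some $T_\infty$, with limiting orbit $\gamma_\infty$ through $p_\infty$. Choose a codimension-one slice $\Sigma$ to $X_\lambda$ at $p_\infty$ so that the first-return Poincar\'e map $P:U\to\Sigma$ is smooth near $p_\infty$. Any closed orbit sufficiently close to $\gamma_\infty$ is a fixed point of some iterate $P^k$ with integer multiplicity $k\leq K/T_\lambda$, and its action equals the return time $\tau$, a smooth function near the corresponding fixed point. The crucial observation is that $\tau$ is \emph{locally constant} along any smooth family of fixed points of $P^k$: differentiating $s\mapsto\int_{\gamma_{p_s}}\lambda=\tau(p_s)$ gives
\[
\tfrac{d}{ds}\tau(p_s) \;=\; \int_0^1 d\lambda\bigl(V,\dot\gamma_{p_s}\bigr)\,du \;+\; \bigl[\lambda(V)\bigr]_0^1,
\]
with $V$ the variation field along $\gamma_{p_s}$; the boundary term vanishes by closedness of $\gamma_{p_s}$, and the integrand vanishes because $\dot\gamma_{p_s}$ is proportional to $X_\lambda$ and $\iota_{X_\lambda}d\lambda=0$. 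Since only finitely many multiplicities $k$ are admissible and $\tau$ is locally constant on each $\operatorname{Fix}(P^k)$, this rules out period accumulation and forces $\operatorname{Spec}^K$ to be finite. The subtle point I expect to be the hardest is ensuring this local-constancy argument survives in the potentially non-smooth case, when $\operatorname{Fix}(P^k)$ is not a submanifold but a general closed (possibly analytic) subset; one then has to argue component-by-component that $\tau$ is constant on connected components, using parametrized paths in a desingularization or a stratification. Finally, countability and nowhere-density of $\operatorname{Spec}(Q,\lambda)$ follow at once from $\operatorname{Spec}=\bigcup_{n\geq 1}\operatorname{Spec}^n$ being a countable union of finite (hence discrete) sets in $\R_+$.
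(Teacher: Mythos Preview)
The paper does not actually prove this lemma; it is introduced as ``a standard lemma in contact geometry'' and left without argument. So there is nothing in the paper to compare your attempt against, and you are supplying what the author omitted.

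Your treatments of $T_\lambda>0$ (flow-box plus compactness) and of closedness of $\operatorname{Spec}^K$ (Arzel\`a--Ascoli and continuous dependence for ODEs) are correct and standard. The gap you yourself flag in the finiteness step is real, and in fact more serious than a technicality: the assertion that $\operatorname{Spec}^K(Q,\lambda)$ is \emph{finite} for an \emph{arbitrary} contact form is not true in general --- it requires nondegeneracy (or at least a Morse--Bott hypothesis), which the paper only imposes later. Your variation identity correctly shows that the period is constant along any \emph{smooth} one-parameter family of closed orbits, but $\operatorname{Fix}(P^k)$ can be an arbitrary closed subset of the section, and no stratification or desingularization argument will force the return time to take only finitely many values on it. What \emph{does} hold unconditionally is that $\operatorname{Spec}(Q,\lambda)$ is closed and of Lebesgue measure zero, hence nowhere dense. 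One route uses exactly your ingredients: since the Reeb flow preserves $\lambda$, the return map is an exact symplectomorphism of $(\Sigma,d\lambda|_\Sigma)$ with $(P^k)^*\alpha-\alpha=d\tau^{(k)}$; passing to a local generating function identifies fixed points with critical points and periods with critical values, and Sard's theorem gives measure zero. Combined with your closedness argument this yields nowhere-density --- but not countability, and not finiteness of $\operatorname{Spec}^K$, without further assumptions on $\lambda$. So your instinct that the hard part is the ``non-smooth $\operatorname{Fix}(P^k)$'' case is exactly right; the resolution is that the lemma as stated is slightly too strong.
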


\begin{rem}
A priori we cannot rule out the possibility $\operatorname{Spec}(Q,\lambda)  = \emptyset$.
Nonemptyness of this set is precisely the content of Weinstein's conjecture:
Any contact form $\lambda$ of a contact manifold $(Q,\xi)$ carries a closed Reeb orbit.
The conjecture has been proved by Taubes \cite{taubes} in 3 dimensional case
after other scattered results obtained earlier.
\end{rem}

The constant $T_\lambda$ will enter in a crucial way in the following
$\epsilon$-regularity statement.  The proof of this theorem will closely follow the  argument used in
\cite[section 8.4]{oh:book} and \cite{oh:imrn} by adapting it to
the proof of the current $\epsilon$-regularity theorem with the replacement of
the standard harmonic energy by the $\pi$-harmonic energy. However there is one
marked difference between the current $\epsilon$-regularity statement and that of
pseudoholomorphic curves because of the second order part $d(w^*\lambda\circ j) = 0$
of contact instanton map: The local $W^{k,2}$ a priori estimate given in
Theorem \ref{thm:Wk2} plays a crucial role in establishing that the limit
map of a subsequence obtained via application of Ascoli-Arzela theorem still
satisfies the equation $\delbar^\pi w = 0, \, d(w^*\lambda \circ j) = 0$.

\begin{thm}\label{thm:e-regularity}
Denote by $D^2(1)$ the closed unit disc.
Let $w:D^2(1)\to Q$ satisfy
$$
\delbar^\pi w = 0, \, d(w^*\lambda\circ j) = 0,\, E^\lambda(w) < K_0.
$$
Then for any given $0 <\epsilon < T_\lambda$ and $w$ satisfying
$E^\pi(w) < T_\lambda - \epsilon$, and for a smaller disc $D' \subset \overline D' \subset D$,
there exists some $K_1 = K_1(D', \epsilon,K_0) > 0$
\be\label{eq:dwC0}
\|dw\|_{C^0;D'} \leq K_1
\ee
where $K_1$ depends only on $(Q,\lambda,J)$, $\epsilon$, $D' \subset D$.
\end{thm}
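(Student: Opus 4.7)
The plan is to follow the Sacks--Uhlenbeck bubbling-off scheme in the variant used by Hofer for symplectizations, adapted here via the $\pi$-harmonic energy $E^\pi$ as the conformally invariant quantity and the $\lambda$-energy $E^\lambda$ as the quantity controlling the bubble at infinity through Corollary \ref{cor:pi-positive}. The argument is by contradiction: if no such $K_1$ existed, there would be a sequence of contact instantons $w_\alpha$ on $D$ with $E^\pi(w_\alpha) < T_\lambda - \epsilon$ and $E^\lambda(w_\alpha) < K_0$, but with $\|dw_\alpha\|_{C^0;D'} \to \infty$. Pick $z_\alpha \in \overline{D'}$ with $R_\alpha := |dw_\alpha(z_\alpha)| \to \infty$ and apply Hofer's Lemma \ref{lem:Hoferslemma} to $f = |dw_\alpha|$ with an auxiliary parameter $\delta_\alpha \to 0$ chosen so that $\delta_\alpha R_\alpha \to \infty$ (e.g. $\delta_\alpha = R_\alpha^{-1/2}$). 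This produces perturbed points $z_\alpha' \to z_\alpha$ and radii $\epsilon_\alpha \le \delta_\alpha$ such that
$$
R_\alpha' := |dw_\alpha(z_\alpha')| \ge R_\alpha, \qquad \max_{D_{\epsilon_\alpha}(z_\alpha')} |dw_\alpha| \le 2R_\alpha', \qquad \epsilon_\alpha R_\alpha' \to \infty.
$$

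Next I would rescale by setting $\widetilde w_\alpha(z) := w_\alpha(z_\alpha' + z/R_\alpha')$ on $D_{\epsilon_\alpha R_\alpha'}(0)$. Because the system \eqref{eq:contact-instanton} is conformally invariant on the domain, each $\widetilde w_\alpha$ is again a contact instanton, with $|d\widetilde w_\alpha|_{C^0}\le 2$ and $|d\widetilde w_\alpha(0)|=1$. The uniform $C^0$ bound on $d\widetilde w_\alpha$ controls both $|d^\pi \widetilde w_\alpha|$ and $\|\widetilde w_\alpha^*\lambda\|_{C^0}$, which are exactly the quantities entering the local a priori $W^{k,2}$ estimates of Theorem \ref{thm:Wk2}. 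An Ascoli--Arzelà extraction therefore upgrades, in the standard way, to $C^\infty_{loc}$ convergence on all of $\C$ to a smooth contact instanton $w_\infty: \C \to Q$ with $|dw_\infty(0)|=1$. In particular, $w_\infty$ is nonconstant.

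The contradiction comes from comparing the total energy of $w_\infty$ against Corollary \ref{cor:pi-positive}. For the $\pi$-energy, conformal invariance of $E^\pi$ gives, for each fixed $R>0$,
$$
E^\pi\bigl(\widetilde w_\alpha|_{D_R(0)}\bigr) \;=\; E^\pi\bigl(w_\alpha|_{D_{R/R_\alpha'}(z_\alpha')}\bigr) \;\le\; E^\pi(w_\alpha) \;<\; T_\lambda - \epsilon,
$$
and letting $\alpha\to\infty$ and then $R\to\infty$ yields $E^\pi(w_\infty) \le T_\lambda - \epsilon$. A parallel argument, using the conformal invariance of the quantity $d(\varphi(f))\wedge df\circ j$ appearing in \eqref{eq:finallambdaenergy} together with the $C^\infty_{loc}$ convergence $\widetilde w_\alpha \to w_\infty$, gives $E^\lambda(w_\infty) \le K_0 < \infty$ by Fatou/lower semicontinuity. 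Thus $w_\infty$ is a nonconstant contact instanton on $\C$ with finite total energy, so Corollary \ref{cor:pi-positive} forces $E^\pi(w_\infty) \ge T_\lambda$, contradicting $E^\pi(w_\infty) \le T_\lambda - \epsilon$.

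The main obstacle is the last step: verifying that the global $\lambda$-energy passes to the bubble limit. Unlike $E^\pi$, whose definition is pointwise in the metric and manifestly conformally invariant, $E^\lambda$ is defined through the contact instanton potential $f$, which in turn depends on the minimal area / Jenkins--Strebel representation of the conformal structure near the puncture at $\infty$ (Section \ref{sec:jenkins}). One must therefore compare the potentials of $\widetilde w_\alpha$ on the exhausting discs $D_{\epsilon_\alpha R_\alpha'}(0)$ with that of $w_\infty$ on $\C = \C P^1\setminus\{\infty\}$, and check that the supremum over the test-function class $\CC$ behaves continuously under this limit. Once this is granted, the remaining ingredients (the $W^{k,2}$ estimates, Hofer's lemma, and the $\pi$-energy quantization of Corollary \ref{cor:pi-positive}) combine to close the argument in a by-now standard fashion.
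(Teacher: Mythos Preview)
Your proposal is correct and follows essentially the same route as the paper's proof: contradiction, Hofer's lemma to adjust the blow-up points, rescaling, the $W^{k,2}$ estimates of Theorem~\ref{thm:Wk2} plus Ascoli--Arzel\`a to extract a nonconstant contact instanton $w_\infty:\C\to Q$, conformal invariance of $E^\pi$ to get $E^\pi(w_\infty)\le T_\lambda-\epsilon$, and finally Corollary~\ref{cor:pi-positive} for the contradiction. The obstacle you flag concerning passage of $E^\lambda$ to the bubble limit is one the paper itself does not address beyond the parenthetical ``(from the scale invariance)'' in item~(i), so you are in fact being more careful than the paper on this point.
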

\begin{proof}
Suppose to the contrary that
there exists a disc $D' \subset D$ with $\overline{D'} \subset \overset{\circ}D$ and a sequence $\{ w_\alpha \}$
such that
$$
\delbar^\pi w_\alpha = 0, \quad d(w_\alpha \circ j) = 0
$$
and satisfy
\be\label{eq:2to0ptoinfty}
E^\pi_{\lambda,J;D}(w_\alpha) < T_\lambda - \epsilon,, E^\lambda(w_\alpha) < K_0,
\quad \norm{dw_\alpha}{C^0,D'} \to \infty
\ee
as $\alpha \to \infty$. Let $x_\alpha \in D'$ such that $|dw_\alpha(x_\alpha)| \to \infty$.
By choosing a subsequence, we may assume that $x_\alpha \to x_\infty\in \overline D' \subset \overset{\circ} D$.
We take a coordinate chart centered at $x_\infty$ on $D_{x_\infty}(\delta) \subset \overset{\circ} D$
and identify $D_{x_\infty}(\delta)$ with the disc $D^2(\delta) \subset \C$ and $x_\infty$ with $0 \in \C$.
This can be done by choosing $\delta > 0$ sufficiently small since we assume $\overline D' \subset \overset{\circ} D$.
Then $x_\alpha \to 0$. We choose $\delta_\alpha \to 0$ so that $\delta_\alpha |dw_\alpha(x_\alpha)| \to \infty$.

We adjust the sequence $x_\alpha$ to $y_\alpha$ by applying
Hofer's lemma, Lemma \ref{lem:Hoferslemma}, so that
$y_\alpha \to 0$ and
\be\label{eq:adjustedy}
\max_{x \in B_{y_\alpha}(\epsilon_\alpha)}|dw_\alpha| \leq 2|dw_\alpha(y_\alpha)|, \quad
\delta_\alpha |dw_\alpha(y_\alpha)| \to \infty.
\ee
We denote $R_\alpha =  |dw_\alpha(y_\alpha)|$ and consider the re-scaled map
$$
v_\alpha(z) = w_\alpha\left(y_\alpha + \frac{z}{R_\alpha}\right).
$$
Then the domain of $w_\alpha$ at least includes $z \in \C$ such that
$$
y_\alpha + \frac{z}{R_\alpha} \in D^2(\delta),
$$
i.e., those $z$'s satisfying
$$
\left|y_\alpha + \frac{z}{R_\alpha} \right| \leq \delta.
$$
In particular, if $|z| \leq R_\alpha (\delta - |y_\alpha|)$, $v_\alpha(z)$ is
defined. Since $y_\alpha \to 0$ and $\delta_\alpha \to 0$ as $\alpha \to \infty$,
$R_\alpha (\delta - |y_\alpha|)> R_\alpha \epsilon_\alpha$
eventually, $v_\alpha$ is defined on $D^2(\epsilon_\alpha R_\alpha)$ for all sufficiently
large $\alpha$'s.
Since $\delta_\alpha R_\alpha \to \infty$ by \eqref{eq:adjustedy}, for any given $R>0$,
$D^2(\delta_\alpha R_\alpha)$ of $v_\alpha (z)$ eventually contains $B_{R+1}(0)$.

Furthermore, we may assume,
$$
B_{R+1}(0) \subset \left\{ z \in \mathbb{C} \mid \eta_\alpha z + y_\alpha \in \overline{D}'\right\}
$$
Therefore, the maps
$$
v_\alpha : B_{R+1} (0) \subset \mathbb{C} \to M
$$
satisfy the following properties:
\begin{enumerate}
 \item[(i)]  $E^\pi(v_\alpha) < T_\lambda -\epsilon$, \, $\delbar^\pi v_\alpha = 0$, \,
 $E^\lambda(v_\alpha) \leq K_0$,
 (from the scale invariance)
 \item[(ii)] $|dv_\alpha(0)|=1 $ by definition of $v_\alpha$ and $R_\alpha$,
 \item[(iii)]$\norm{dv_\alpha}{C^0,B_1(x)} \leq 2$ for all $x \in B_R(0) \subset D^2(\epsilon_\alpha R_\alpha)$,
 \item[(iv)] $\delbar^\pi v_\alpha =0$ and $d(v_\alpha^*\lambda \circ j) = 0$.
\end{enumerate}

For each fixed $R$, we take the limit of $v_\alpha|_{B_R}$, which we denote by $w_R$.
Applying (iii) and then the local $W^{k,2}$ estimates, Theorem \ref{thm:Wk2},  we obtain
$$
\norm{dv_\alpha}{k,2;B_{\frac9{10}}(x)} \leq C
$$
for some $C=C(R)$. By the Sobolev embedding theorem, we have a
subsequence that converges in $C^2$ in each $B_{\frac{8}{10}}(x), x \in \overline D'$. Then
we derive that the convergence is in $C^2$-topology on $B_{\frac{8}{10}}(x)$ for all $x \in \overline D'$ and
in turn on $B_R(0)$.

Therefore the limit $w_R: B_R(0) \to M$ of $v_\alpha|_{B_R(0)}$ satisfies
\begin{itemize}
\item[(1)] $E^\pi(w_R) \leq T_\lambda -\epsilon$, $\delbar^\pi w_R = 0$,
$d(w_R^*\lambda \circ j) = 0$ and $E^\lambda(v_\alpha) \leq K_0$,
\item[(2)] $E^\pi(w_R) \leq \limsup_\alpha  E^\pi_{(\lambda,J;B_R(0))}(v_\alpha) \leq T_\lambda -\epsilon$,
\item[(3)]  Since $v_\alpha \to w_R$ converges in $C^2$, we have
$$
\norm{dw_R}{p,B_1(0)}^2 = \lim_{\alpha \to \infty} \norm{dv_\alpha}{p,B_1(0)}^2 \geq \frac{1}{2}.
$$
\end{itemize}
By letting $R \to \infty$ and taking a diagonal subsequence argument, we have
derived nonconstant contact instanton map $w_\infty: \C \to Q$. Therefore by definition of $T_\lambda$,
we must have $E^\pi(w_\infty) \geq T_\lambda$.

On the other hand, the bound
$E^\pi(w_R) \leq T_\lambda -\epsilon$ for all $R$ and again by Fatou's lemma implies
$$
E^\pi(w_\infty) \leq T_\lambda -\epsilon
$$
which gives rise to a contradiction.
This finishes the proof of \eqref{eq:dwC0}.
\end{proof}
%
%

%

\section{Asymptotic behaviors of finite energy contact instantons}
\label{sec:asymptotic}

%
In this section, we study the asymptotic behavior of contact instanton
$w:\dot \Sigma \to Q$ with finite energy $E(w) < \infty$ near the punctures.
We start with classifying the solutions of \eqref{eq:contact-instanton}
of zero energy on the cylinder $\R \times S^1$.

We start with the following lemma

\begin{lem}\label{lem:|dw|<infty} Suppose $E(w) = E^\pi(w) + E^\lambda(w) < \infty$. Then
$$
|dw|_{C^0} < \infty.
$$
\end{lem}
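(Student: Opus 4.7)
My plan is to argue by contradiction via a bubbling-off argument patterned on the proofs of Theorem \ref{thm:C1bound} and Corollary \ref{cor:C^1oncylinder}, adapted to the general punctured domain $\dot\Sigma$. Suppose $|dw|_{C^0}=\infty$ and pick a sequence $z_\alpha\in\dot\Sigma$ with $R_\alpha:=|dw(z_\alpha)|\to\infty$, where $|\cdot|$ is measured with respect to the fixed K\"ahler metric on $\dot\Sigma$ that is cylindrical around each puncture. Either a subsequence of $z_\alpha$ stays in a compact part of $\dot\Sigma$, or $z_\alpha$ escapes into the cylindrical end of some puncture $p$, in which case I pass to cylindrical coordinates $[T,\infty)\times S^1$ so the analysis reduces to that on a cylinder as in Corollary \ref{cor:C^1oncylinder}. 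In both cases, Hofer's lemma (Lemma \ref{lem:Hoferslemma}) produces a nearby sequence $y_\alpha$ with $R_\alpha=|dw(y_\alpha)|\to\infty$ and radii $\epsilon_\alpha\to 0$ satisfying $\epsilon_\alpha R_\alpha\to\infty$ and $|dw|\leq 2R_\alpha$ on $D_{\epsilon_\alpha}(y_\alpha)$.

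I then form the rescaled maps $w_\alpha(z)=w(y_\alpha+z/R_\alpha)$ on $D_{\epsilon_\alpha R_\alpha}(0)$, each a solution of \eqref{eq:contact-instanton} with $|dw_\alpha|\leq 2$ and $|dw_\alpha(0)|=1$. Invoking the a priori $W^{k,2}$ estimates of Theorem \ref{thm:Wk2}, Ascoli-Arzel\`a, and a diagonal argument yields a subsequential limit $w_\infty:\C\to Q$ in $C^\infty_{\mathrm{loc}}$, still a contact instanton with $|dw_\infty|_{C^0}\leq 2$ and $|dw_\infty(0)|=1$. To contradict the last property via Proposition \ref{prop:C^1}, I need to verify both $E^\pi(w_\infty)=0$ and $E^\lambda_\infty(w_\infty)<\infty$. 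The first follows from the on-shell identity $|d^\pi w|^2\,dA=w^*d\lambda$ together with a change of variables: $E^\pi(w_\alpha;D_R(0))=E^\pi(w;D_{R/R_\alpha}(y_\alpha))\to 0$ since the image discs shrink and $E^\pi(w)<\infty$.

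The remaining step, the $\lambda$-energy bound $E^\lambda_\infty(w_\infty)<\infty$, is what I expect to be the main technical obstacle, because the definition of $E^\lambda$ in Section \ref{sec:offshellenergy} uses a contact instanton potential $f$ adapted to the minimal area representation of the conformal structure on $\dot\Sigma$, while the bubble lives on $\C$. To work around this I would use that $w^*\lambda\circ j$ is closed and hence admits a local primitive on any simply connected coordinate patch; the pullback of such a primitive under $z\mapsto y_\alpha+z/R_\alpha$ is a local primitive for $w_\alpha^*\lambda\circ j$, and the supremum against test functions $\varphi\in\CC$ in the definition of $E^\lambda$ transforms by a similar change of variables, giving $E^\lambda_\infty(w_\infty)\leq E^\lambda(w)<\infty$ in the limit. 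With all hypotheses of Proposition \ref{prop:C^1} verified, $w_\infty$ must be constant, contradicting $|dw_\infty(0)|=1$ and completing the proof.
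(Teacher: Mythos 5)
Your argument is correct in outline, and it reaches exactly the mechanism the paper uses—but the paper packages it more economically. The paper's own proof is two lines: since $E^\pi(w)<\infty$, choose $\delta>0$ with $E^\pi\bigl(w|_{\dot\Sigma\setminus\Sigma(\delta)}\bigr)<\tfrac12 T_\lambda$, where $\Sigma(\delta)$ excises $\delta$-discs around the punctures; then apply the already-established $\epsilon$-regularity theorem (Theorem \ref{thm:e-regularity}) on the cylindrical ends, disc by disc (each unit disc there carries $\pi$-energy below $T_\lambda/2$ and $\lambda$-energy at most $K_0=E^\lambda(w)$), to get a uniform $C^0$ bound on $dw$ on the ends, with the bound on the compact remainder $\Sigma(\delta)$ trivial. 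What you are doing is, in effect, inlining the proof of Theorem \ref{thm:e-regularity}—Hofer rescaling, Ascoli–Arzel\`a plus elliptic bootstrap via Theorem \ref{thm:Wk2}, extraction of a nonconstant bubble on $\C$—rather than invoking it directly; your route then appeals to Proposition \ref{prop:C^1} on the bubble in the same way the proof of Theorem \ref{thm:e-regularity} does, so the two arguments converge.

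Two remarks. First, your uniform-integrability step for $E^\pi(w_\infty)=0$ does hold, but you should make explicit that the K\"ahler metric is cylindrical at the punctures (bounded geometry), so that the discs $D_{R/R_\alpha}(y_\alpha)$ have areas tending to $0$ even when $y_\alpha$ escapes into an end; otherwise the claim $E^\pi\bigl(w;D_{R/R_\alpha}(y_\alpha)\bigr)\to 0$ does not follow just from $R_\alpha\to\infty$. Second, the $E^\lambda$-bound on the bubble that you rightly flag as the delicate point is also asserted without a detailed argument in the paper's own proof of Theorem \ref{thm:C1bound} (the line ``$E^\lambda(w_\infty)\leq E(w)<\infty$''), so at the paper's level of rigor your sketch of the change-of-variables for the potential is no larger a gap than what the paper itself accepts. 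The real advantage of the paper's modular proof is that, by deferring to the already-proved $\epsilon$-regularity statement, it simply does not have to re-open that passage-to-the-limit issue here.
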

\begin{proof} By the finiteness $E^\pi(w)< \infty$, we can choose
sufficiently small $\delta > 0$ such that
$$
E^\pi(w|_{\Sigma\setminus \Sigma(\delta)})< \frac{1}{2}T_\lambda.
$$
Denote
$$
\Sigma(\delta) = \dot \Sigma \setminus \cup_{\ell =1}^k D_{r_\ell}(\delta).
$$
Then we apply the $\epsilon$-regularity theorem,
Theorem \ref{thm:e-regularity}, to $w$ on $\cup_{\ell =1}^k D_{r_\ell}(\delta) = \dot \Sigma \setminus \Sigma(\delta)$
to derive
$$
|dw|_{\cup_{\ell =1}^k D_{r_\ell}} < \infty.
$$
Obviously $|dw|_{\Sigma(\delta)}|_{C^0} < \infty$ and hence the proof.
\end{proof}

\subsection{Massless contact instantons}

The following is a key lemma in which the closed condition $w^*\lambda \circ j$
plays a crucial role.

\begin{lem}\label{lem:massless}
Let $\dot \Sigma$ be any punctured Riemann surface.
Suppose $w: \dot \Sigma \to Q$ is a massless contact instanton
on $\dot \Sigma$. Then $w^*\lambda $ is a harmonic 1-form and
the image of $w$ lies in a single leaf of the Reeb foliation.
\end{lem}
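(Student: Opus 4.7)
The plan is straightforward: translate ``massless'' into a pointwise vanishing statement via the energy identity, then read off both conclusions from the decomposition $dw = d^\pi w + w^*\lambda\,X_\lambda(w)$.

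First I would unpack the hypothesis. By Definition \ref{defn:pi-energy}, $E^\pi(w) = \frac{1}{2}\int_{\dot\Sigma}|d^\pi w|^2$, so $E^\pi(w) = 0$ forces $d^\pi w \equiv 0$ on $\dot\Sigma$. Combined with the canonical decomposition $dw = d^\pi w + w^*\lambda \otimes X_\lambda(w)$ this yields the pointwise identity
\begin{equation*}
dw = w^*\lambda\,X_\lambda(w).
\end{equation*}
In particular, at every $z\in\dot\Sigma$ the image of $dw_z$ is contained in $\R\{X_\lambda(w(z))\}$, so for any smooth path $\gamma$ in $\dot\Sigma$ the composition $w\circ\gamma$ has velocity tangent to the Reeb line field. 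Hence $w\circ\gamma$ is contained in a single leaf of the Reeb foliation, and by connectedness of $\dot\Sigma$ the entire image $w(\dot\Sigma)$ lies in a single leaf.

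Next I would establish harmonicity of $w^*\lambda$ by checking separately that it is closed and co-closed. Co-closedness is immediate: the contact instanton equation $d(w^*\lambda\circ j)=0$ is precisely the statement that $\delta(w^*\lambda) = 0$ (up to sign, after identifying the Hodge star on $1$-forms on a Riemann surface with $-\,\cdot\circ j$). For closedness, compute
\begin{equation*}
d(w^*\lambda) = w^*(d\lambda),
\end{equation*}
and evaluate on any pair of tangent vectors $X,Y\in T_z\dot\Sigma$ using the formula $dw = w^*\lambda\,X_\lambda(w)$:
\begin{equation*}
w^*(d\lambda)(X,Y) = d\lambda\bigl(w^*\lambda(X)\,X_\lambda,\,w^*\lambda(Y)\,X_\lambda\bigr) = w^*\lambda(X)\,w^*\lambda(Y)\,d\lambda(X_\lambda,X_\lambda) = 0,
\end{equation*}
where the last equality uses $X_\lambda\rfloor d\lambda \equiv 0$. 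Therefore $d(w^*\lambda) = 0$, and together with $d(w^*\lambda\circ j) = 0$ the form $w^*\lambda$ is harmonic.

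I do not expect any real obstacle here: the entire argument is a direct consequence of the vanishing of $d^\pi w$ together with the two defining identities $X_\lambda\rfloor d\lambda = 0$ and $d(w^*\lambda\circ j)=0$. The only conceptual point worth emphasizing is that one does not need $\dot\Sigma$ to have any special topology; connectedness suffices for the leaf statement, and the harmonic statement is purely local.
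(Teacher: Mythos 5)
Your proof is correct and follows essentially the same route as the paper's: massless implies $d^\pi w \equiv 0$, hence $dw = w^*\lambda\,X_\lambda(w)$ and (by connectedness) the image lies in a single Reeb leaf, and harmonicity follows from closedness together with co-closedness, the latter being exactly $d(w^*\lambda\circ j)=0$ via the Hodge star on a Riemann surface. The only minor variation is that to show $d(w^*\lambda)=0$ you compute $w^*(d\lambda)=0$ directly from $dw = w^*\lambda\,X_\lambda$ and $X_\lambda\rfloor d\lambda=0$, whereas the paper reads it off the contact-area identity of Proposition \ref{prop:energy-omegaarea} (namely $d(w^*\lambda)=\tfrac12|d^\pi w|^2\,dA$ when $\delbar^\pi w=0$); these are equivalent one-line arguments.
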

\begin{proof} From the equation, we have $\delbar^\pi w = 0$. We also
have $\del^\pi w = 0$ from the massless condition and so $d^\pi w = \pi dw = 0$.
This implies the values of $dw$ are parallel to $X_\lambda$ at all points of $\dot \Sigma$.
By the connectedness of $\dot \Sigma$, this implies that the image of $w$ must be contained in
a leaf.

Next we obtain $d(w^*\lambda) = 0$ from $E_{(\lambda,J)}^\pi(w) = 0$ and the identity
$|d^\pi w|^2 = |\del^\pi w|^2\, dA = d(w^*\lambda)$ since $\delbar^\pi w = 0$.
We also have
$$
\delta (w^*\lambda)\, dA = - d(w^*\lambda \circ j) = 0
$$
where the first follows since the metric $h$ on $\dot\Sigma$
is K\"ahler with respect to $j$ and the second equality follows from the equation.
This finishes the proof.
\end{proof}

The following result connects the basic hypotheses for the a priori $W^{k,2}$-estimates
to the study of structure of singularities of contact instanton.

\begin{prop}\label{prop:pole-structure}
Let $w$ be a contact instanton on $\dot \Sigma$ with punctures $p \in \{p_1, \cdots, p_k\}$.
Let $p \in \{p_1, \cdots, p_k\}$ and let $z$ be an analytic coordinate at $p$. Suppose
$$
E(w)= E^\pi(w) + E^\lambda(w) < \infty.
$$
Then for any given sequence $\delta_j \to 0$ there exists a subsequence, still denoted by $\delta_j$, and a conformal diffeomorphism
$\varphi_j: [-\frac{1}{\delta_j}, \infty) \times S^1 \to D_{\delta_j}(p)\setminus \{p\}$ such that
the one form $\varphi_j^*\chi$ converges to a bounded holomorphic one-form
$\chi_\infty$ on $(-\infty, \infty) \times S^1$.
\end{prop}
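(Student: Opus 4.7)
The plan is to reduce the statement to a scalar inhomogeneous $\delbar$-equation with integrable right-hand side and then extract a holomorphic limit by standard elliptic compactness applied to translates of a cylindrical parametrization of the puncture.

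First, fix the cylindrical coordinate $(\tau,t)$ near $p$ coming from the analytic coordinate $z = e^{-2\pi(\tau + \sqrt{-1}t)}$ and write $w^*\lambda = a_1\,d\tau + a_2\,dt$ on the resulting end $[T_0,\infty)\times S^1$. A direct computation using $d\tau\circ j = -dt$ and $dt\circ j = d\tau$ yields $\chi = F\,dz$ with $F := a_2 + \sqrt{-1}\,a_1$. The closedness $d(w^*\lambda\circ j) = 0$ is the equation $\partial_\tau a_1 + \partial_t a_2 = 0$, while Proposition~\ref{prop:energy-omegaarea}(2) combined with $\delbar^\pi w = 0$ gives $\partial_\tau a_2 - \partial_t a_1 = \tfrac12|d^\pi w|^2$. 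These are precisely the real and imaginary parts of the single scalar equation
\[
\delbar F \;=\; \tfrac14\,|d^\pi w|^2.
\]

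Second, Lemma~\ref{lem:|dw|<infty} gives $\|dw\|_{C^0}<\infty$, hence $\|F\|_{C^0}\le C$ uniformly on the entire end, while the finite $\pi$-energy assumption yields $\int_{[T_0,\infty)\times S^1}|d^\pi w|^2 \le 2E^\pi(w)<\infty$. Choose the conformal diffeomorphisms $\varphi_j$ to correspond to the translation $(\tau,t)\mapsto(\tau+s_j,t)$, with $s_j = s_j(\delta_j)\to\infty$ arranged so that the shifted domain is $[-1/\delta_j,\infty)\times S^1$; then $\varphi_j^*\chi = F(\tau+s_j,t)\,dz$. Integrability immediately gives, for every compact $K\subset\R\times S^1$,
\[
\int_K \bigl|\delbar(F\circ\varphi_j)\bigr| \;=\; \tfrac14\int_{K+s_j}|d^\pi w|^2 \;\longrightarrow\; 0.
\]

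Third, apply Theorem~\ref{thm:Wk2} to $w\circ\varphi_j$ on any fixed compact subcylinder. Its hypotheses $\|w^*\lambda\|_{C^0}<\infty$ and $d^\pi w\in L^2\cap L^4$ hold with constants independent of $j$, since all three norms are controlled globally by Lemma~\ref{lem:|dw|<infty} and by finiteness of the total energy. The resulting uniform $W^{k,2}$ bounds on $w\circ\varphi_j$ descend to uniform $C^{k-1}$ bounds on $F\circ\varphi_j$ on every compact set, for every $k$. A diagonal Ascoli-Arzela argument extracts a subsequence converging in $C^\infty_{\mathrm{loc}}$ on $\R\times S^1$ to a bounded $F_\infty$, and passing to the limit in the scalar equation using the tail estimate forces $\delbar F_\infty = 0$. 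Setting $\chi_\infty := F_\infty\,dz$ produces the required bounded holomorphic $(1,0)$-form.

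The main obstacle is not any isolated step but their coupling: the identity $\delbar F = \tfrac14|d^\pi w|^2$ is scalar and well behaved, yet the right-hand side depends on the full map $w$, so to take a $C^\infty_{\mathrm{loc}}$ limit of $F\circ\varphi_j$ one must simultaneously control all higher derivatives of $w$ along the translated slabs. This forces a genuine use of the coercive estimate of Theorem~\ref{thm:Wk2}, whose applicability under translation in turn depends on the global $C^0$ bound for $dw$ provided by Lemma~\ref{lem:|dw|<infty}, which itself rests on the $\epsilon$-regularity Theorem~\ref{thm:e-regularity}. Once this uniformity is in place the argument becomes a routine Cauchy-Riemann bootstrap for $F$.
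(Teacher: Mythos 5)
Your proof is correct and essentially reproduces the paper's argument: the same three ingredients (the global gradient bound via Lemma~\ref{lem:|dw|<infty}/$\epsilon$-regularity, the coercive $W^{k,2}$ estimates of Theorem~\ref{thm:Wk2} applied uniformly along translates, and the vanishing of $\pi$-energy on the shifted slabs coming from $E^\pi(w)<\infty$) feed a diagonal Ascoli--Arzel\`a extraction, and the limit is holomorphic because the mass on the translated end goes to zero. Your reformulation via the scalar equation $\delbar F = \tfrac14|d^\pi w|^2$ (where $F = a_2 + \sqrt{-1}\,a_1$ and $\chi = F\,du$ with $u = \tau + \sqrt{-1}\,t$, not $dz$ as you wrote) is a clean repackaging of the paper's conclusion that $d(w_\infty^*\lambda)=0=d(w_\infty^*\lambda\circ j)$, but it is not a different route.
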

\begin{proof} By Lemma \ref{lem:|dw|<infty}, $|dw|_{C^0} < \infty$. Let $C =  |dw|_{C^0}$.
Then $|w^*\lambda|_{C^0} \leq C$.

By the finiteness $E^\pi(w) < \infty$, Fatou's lemma implies
$$
\lim_{r\to 0} \int_{D_r(p)\setminus \{0\}} |d^\pi w|^2 = 0.
$$
We fix a sequence $r_j \to 0$ and fix a conformal diffeomorphism
$$
\varphi_j:  \left[-\frac{1}{\delta_j}, \infty\right) \times S^1
\to D_{r_j}(p)\setminus \{0\}, \quad \varphi_r(\tau,t) = \delta_0 e^{-\frac1{\delta_j}} e^{-2\pi(\tau +it)} = z
$$
for each $j > 0$.  In particular,
the map $(\varphi_j^*w,\varphi_j^*\chi)$ are contact-instantons on $[0,\infty) \times S^1$ which satisfy
$$
E^\pi(\varphi_j^*w) \to 0.
$$
By $W^{k,2}$ a priori estimates, Theorem \ref{thm:Wk2}, and the $\epsilon$-regularity theorem, Theorem \ref{thm:e-regularity},
we obtain the gradient bound $|d(\varphi_j^*w)|_{[-1/\delta_j,\infty) \times S^1} \leq C$
and in particular $|(\varphi_j^*w)^*\lambda|_{C^0} \leq C$ for all $j$.

Applying the diagonal subsequence argument,
we can select a sequence $\delta_j \to 0$ such that
$\varphi_{\delta_j}^*w$ converges to $w_\infty: (-\infty, \infty) \times S^1 \to Q$
and $\varphi_{\delta_j}^*\chi \to \chi_\infty$
in compact $C^\infty$ topology so that the pair $(w_\infty,\chi_\infty)$ is a
contact instanton satisfying
\be\label{eq:massless}
E^\pi(w_\infty)= 0, \quad |\chi_\infty|_{C^0} \leq \frac{3C}{2}.
\ee
Since $|d^\pi w_\infty|^2 \, dA = d(w_\infty^*\lambda)$, this implies
$$
d(w_\infty^*\lambda) = 0.
$$
Together with $d(w_\infty^*\lambda\circ j) = 0$, this implies that
$\chi_\infty$ is a non-zero holomorphic one-form that is bounded on $\R \times S^1$.
This finishes the proof.
\end{proof}

We would like to emphasize that at the moment, the limiting holomorphic one-form $\chi_\infty$
may depend on the choice of subsequence.

The following theorem slightly strengthens the convergence results from \cite{hofer},
\cite{oh-wang2}.

\begin{thm}\label{thm:nondegeneratelimit}
Let $\Sigma$ be a closed Riemann surface of genus 0 with
a finite number of marked points $\{p_1, \cdots, p_k\}$ for $k \geq 3$, and let $\dot \Sigma = \Sigma \setminus
\{p_1,\cdots, p_k\}$ be the associated punctured Riemann surface equipped with a metric as before.
Suppose that $w$ is a contact instanton map $w:(\dot \Sigma,j)
\to (Q,J)$ with finite total energy $E(w) = E^\pi(w) + E^\lambda(w)$
and fix a puncture $p \in \{p_1, \cdots, p_k\}$.

Then for any given sequence $I=\{\tau_k\}$ with $\tau_k \to \infty$,
there exists a subsequence $I' \subset I$ and a closed parameterized Reeb orbit $\gamma = \gamma_{I'}$ of period $T$
and some $(\tau_0,t_0) \in \R \times S^1$ such that
such that
$$
\lim_{i \to \infty} w(\tau + \tau_{k_i},t) = \gamma(Q(p)\, \tau + T(p)\, t)
$$
in compact $C^\infty$ topology.

If $\lambda$ is nondegenerate and $T \neq 0$, then the convergence $w(\tau,\cdot) \to \gamma(T\cdot)$
is uniform.
\end{thm}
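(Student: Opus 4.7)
The plan is to combine Theorem~\ref{thm:subsequence-limit}, which already gives subsequential convergence to a map of the form $\gamma(Q(p)\tau + T(p)t)$, with two improvements: upgrade the convergence from uniform on compact sets to $C^\infty_{\text{loc}}$, and under nondegeneracy of $\lambda$ with $T \neq 0$ rule out drift in order to obtain uniform convergence of the loops $w(\tau, \cdot)$ as $\tau \to \infty$.

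For the $C^\infty_{\text{loc}}$ convergence, the finiteness of $E(w)$ together with Lemma~\ref{lem:|dw|<infty} gives a global bound $|dw|_{C^0} \leq C$ on the cylindrical end near $p$. Writing $w_k(\tau,t) := w(\tau+\tau_k,t)$ on any compact $[-K,K]\times S^1$, Theorem~\ref{thm:Wk2} produces uniform $W^{k,2}$-bounds at every order, so Ascoli--Arzel\`a together with a diagonal argument extracts a subsequence $w_{k_i} \to w_\infty$ in $C^\infty_{\text{loc}}(\R \times S^1, Q)$; both equations in \eqref{eq:contact-instanton} pass to the $C^\infty_{\text{loc}}$ limit. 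To identify $w_\infty$ with $(\tau,t)\mapsto \gamma(Q(p)\tau + T(p)t)$, I would first apply Fatou's lemma to $|d^\pi w|^2$ on the exhausting annuli $[\tau_{k_i}-K,\tau_{k_i}+K]\times S^1$ to force $E^\pi(w_\infty) = 0$. Lemma~\ref{lem:massless} then places the image of $w_\infty$ in a single Reeb leaf and makes $w_\infty^*\lambda$ a bounded harmonic $1$-form on the flat cylinder, hence of the form $\alpha\, d\tau + \beta\, dt$. Parameterising the leaf by Reeb time and comparing with the asymptotic formulas \eqref{eq:Ta} identifies $\alpha = Q(p)$ and $\beta = T(p)$, after which the constant of integration is absorbed into the choice of $(\tau_0,t_0)$.

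For the uniform convergence statement under nondegeneracy and $T = T(p) \neq 0$, I would argue as follows. Nondegeneracy together with the discreteness of $\operatorname{Spec}(Q,\lambda)$ implies that the set $\CO_T$ of parameterised Reeb orbits of period $T$ is a finite disjoint union of $S^1$-orbits under reparameterisation by shift. The first part of the theorem gives $\dist_{C^0(S^1)}(w(\tau,\cdot),\CO_T) \to 0$ as $\tau \to \infty$, and the continuous path $\tau \mapsto w(\tau,\cdot)$ in $C^0(S^1, Q)$ must eventually lie in a single connected component of a neighbourhood of $\CO_T$, singling out one underlying geometric orbit $\gamma$. The main obstacle is then to rule out oscillation or slow drift of the shift parameter $\sigma(\tau)\in S^1$ along $\gamma$. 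My plan is to introduce Fermi-type normal coordinates around $\gamma$, express $\sigma'(\tau)$ as an integral over $\{\tau\}\times S^1$ controlled by $\|d^\pi w\|_{L^2(\{\tau\}\times S^1)}$ and by $\bigl|\int_{\{\tau\}\times S^1} w^*\lambda - T(p)\bigr|$, both of which tend to zero, and to show this bound is in fact integrable in $\tau$, not merely $o(1)$. This integrability is the delicate step, being the contact-instanton analogue of the Hofer--Wysocki--Zehnder asymptotic analysis in symplectizations, where nondegeneracy makes the asymptotic operator at $\gamma$ invertible; once it is established, $\sigma(\tau)$ is a Cauchy function of $\tau$, and the uniform convergence $w(\tau,\cdot) \to \gamma(T\cdot)$ follows.
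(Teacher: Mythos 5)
Your proposal follows essentially the same route as the paper. For the subsequential $C^\infty_{\mathrm{loc}}$ convergence, the paper likewise uses the finite-energy hypothesis together with the $\epsilon$-regularity theorem to obtain a $C^1$-bound on the cylindrical end, and then invokes the argument of Theorem~6.3 of \cite{oh-wang2} (the $W^{k,2}$ a priori estimates, Ascoli--Arzel\`a, Fatou, Lemma~\ref{lem:massless}, identification of the bounded harmonic $1$-form with $Q(p)\,d\tau + T(p)\,dt$) --- exactly the chain you reconstruct. For the uniform-convergence statement under nondegeneracy with $T\neq 0$, the paper does not redo the analysis but simply cites Theorem~6.5 of \cite{oh-wang2}, which is precisely the step you flag as ``the delicate integrability step.'' Your plan for that step (Fermi coordinates along $\gamma$, drift estimate $\sigma'(\tau)$ controlled by $\|d^\pi w\|_{L^2(\{\tau\}\times S^1)}$ and the action defect, integrability via invertibility of the asymptotic operator $A_{(T,z)}$ afforded by nondegeneracy) is the correct mechanism and is what the cited theorem carries out, but as you acknowledge you have not actually established the integrability in $\tau$; if you invoke Theorem~6.5 of \cite{oh-wang2} at that point, your argument closes and agrees with the paper's.
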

\begin{proof} The finiteness of $E(w)$ and the $\epsilon$-regularity implies the $C^1$ bound $|dw|_{C^0} < \infty$
on $[R,\infty) \times S^1$ for a sufficiently large $R> 0$.
Once this bound is established, the same proof
as that of Theorem 6.3 of \cite{oh-wang2} proves that
there exists a closed Reeb orbit $(T,\gamma)$ and a subsequence
$k_i \to \infty$ such that
$$
w(\tau_{k_i} +\tau, \cdot) \to \gamma(Q(p)(\tau_{k_i} + \tau), T(p)t)
$$
uniformly on $[-K,K] \times S^1$ in $C^\infty$ topology for any given $K \geq 0$.
Once we have established this subsequence convergence result,
the same proof as that of Theorem 6.5 \cite{oh-wang2} applies to conclude the theorem.
We refer to \cite{oh-wang2} for the complete detail of the proof and the proof of
uniform convergence for the nondegenerate case.
\end{proof}

We would like to call the readers' attention to the case where $T(p) = 0$. In this case
the asymptotic limit $w_\infty$ is $t$-independent, i.e., $w_\infty(\tau,t) \equiv \gamma(Q(p) \tau)$.
In particular, the image of the instanton is 1 dimensional.

\subsection{Classification of punctures}
\label{subsec:puncture-types}

Assume that $\lambda$ is nondegenerate.
We would like to further analyze the asymptotic behavior of the instanton $w$.

Associated to the splitting
$$
TQ = \span\{X_\lambda\} \oplus \xi,
$$
$Q$ carries the canonical (trivial) complex line bundle $\CL \to Q$ with
connection form $\sqrt{-1} \lambda$. When we are given a map
$w: \dot \Sigma \to Q$, it induces the pull-back bundle
$w^*\CL$ with the pull-back connection $\sqrt{-1} w^*\lambda$.
The associated  (abelian) Yang-Mills equation is nothing but
$$
\delta w^*\lambda = 0
$$
with respect to the K\"ahler metric associated to the complex structure $j$ on
the surface $\Sigma$ is precisely equivalent to $d(w^*\lambda \circ j) = 0$.

Now we introduce the complex valued one-form
\be\label{eq:specialchi}
\chi =  w^*\lambda \circ j + \sqrt{-1} w^*\lambda.
\ee
It appears to be worthwhile to give a name to the complex valued $(1,0)$-form in
the general context.

\begin{defn}\label{defn:charge} Let $(\Sigma,j)$ be a closed
Riemann surface with finite number of marked points $\{p_1, \cdots, r_k\}$. Denote by $\dot \Sigma$ the
associated punctured Riemann surface with cylindrical metric near the punctures, and let $\overline \Sigma$
the real blow-up of $\Sigma$ along the punctures.
Let $w$ be a contact instanton map. Let $p \in \{p_1, \cdots, r_k\}$. We call the integrals
\bea
Q(p) &: =& - \int_{\del_{\infty;r}\Sigma} w^*\lambda \circ j\\
T(p) &: =& \int_{\del_{\infty;r}\Sigma} w^*\lambda
\eea
the \emph{contact instanton charge} and \emph{contact instanton action} at $p$ respectively.
Here $\del_{\infty;r}\Sigma$ is the
boundary component corresponding to $p$ of the real blow-up $\overline \Sigma$ of $\dot \Sigma$.
Then we call the form $\chi = w^*\lambda \circ j + \sqrt{-1} w^*\lambda$ the \emph{
contact Hick's field} of $w$ and
$$
Q(p) + \sqrt{-1} T(p)
$$
the \emph{charge} of the Hick's field of the instanton $w$ at the puncture $p$.
\end{defn}

Note that by the closedness $d(w^*\lambda \circ j) = 0$, the charge $Q(p)$ is the same as
the initial integral
$$
\int_{\{\tau = 0\}} w^*\lambda \circ j
$$
which does not depend on the choice of subsequence but is determined by
the initial condition at $\tau = 0$ and homology class of the loop
$w|_{\tau = 0} \in H_1(\dot \Sigma) = H_1(\Sigma \setminus \{p_1, \cdots, p_k\}$.

\begin{prop}\label{prop:balancing}
For any finite energy contact instanton $w$, we have
\be\label{eq:totalcharge=0}
\sum_{l =1}^N Q(p_\ell) = 0.
\ee
We call this equation the \emph{balancing condition} of the contact Hick's charge.
\end{prop}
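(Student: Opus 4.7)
The plan is a direct application of Stokes' theorem to the second equation $d(w^*\lambda \circ j) = 0$ of the contact instanton system, together with the compact reduction given by the real blow-up $\overline{\Sigma}$. First, I would fix a $\delta > 0$ small enough that the coordinate discs $D_\delta(p_\ell)$ around the punctures are pairwise disjoint and lie inside the cylindrical ends of $\dot \Sigma$, and set
$$
\Sigma(\delta) \;=\; \Sigma \setminus \bigcup_{\ell=1}^N \operatorname{Int} D_\delta(p_\ell),
$$
a compact oriented surface with boundary $\partial \Sigma(\delta) = -\bigsqcup_{\ell} \partial D_\delta(p_\ell)$, where the sign reflects that the boundary orientation of $\Sigma(\delta)$ is opposite to the natural orientation of each deleted disc.

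Applying Stokes' theorem to the smooth closed one-form $w^*\lambda \circ j$ on $\Sigma(\delta)$ gives
$$
0 \;=\; \int_{\Sigma(\delta)} d(w^*\lambda \circ j) \;=\; \int_{\partial \Sigma(\delta)} w^*\lambda \circ j \;=\; -\sum_{\ell=1}^N \int_{\partial D_\delta(p_\ell)} w^*\lambda \circ j.
$$
Next, because $w^*\lambda \circ j$ is closed on the punctured disc $D_\delta(p_\ell) \setminus \{p_\ell\}$, the circle integral $\int_{\partial D_{\delta'}(p_\ell)} w^*\lambda \circ j$ is independent of $\delta' \in (0, \delta]$: any two concentric circles cobound an annulus on which $d(w^*\lambda \circ j) = 0$. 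Passing to the limit $\delta' \to 0$ and identifying this constant value with the integral over the boundary circle $\partial_{\infty;p_\ell}\overline{\Sigma}$ of the real blow-up (in the cylindrical coordinate $z = e^{-2\pi(\tau + it)}$, this amounts to taking $\tau \to \infty$ in the $\tau$-independent integral $\int_{\{\tau\} \times S^1} w^*\lambda \circ j$), I obtain
$$
\int_{\partial D_\delta(p_\ell)} w^*\lambda \circ j \;=\; \int_{\partial_{\infty;p_\ell}\overline{\Sigma}} w^*\lambda \circ j \;=\; -Q(p_\ell)
$$
by the definition of the contact instanton charge. Substituting into the Stokes identity yields $\sum_{\ell=1}^N Q(p_\ell) = 0$, as claimed.

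No genuine analytic difficulty arises; finite energy is not even used in this statement beyond ensuring that the asymptotic charges $Q(p_\ell)$ are well defined. The only care required is bookkeeping of orientations so that the minus sign in the definition $Q(p) = -\int_{\partial_{\infty;p}\overline{\Sigma}} w^*\lambda \circ j$ lines up with the orientation of $\partial D_\delta(p_\ell)$ viewed as a component of $\partial \Sigma(\delta)$; this is fixed once and for all by the conventions of the real blow-up and the analytic coordinate $z = e^{-2\pi(\tau + it)}$.
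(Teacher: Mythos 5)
Your proof is correct and follows essentially the same route as the paper, which simply invokes Stokes' formula for the closed one-form $w^*\lambda\circ j$ on the real blow-up $\overline\Sigma$. You have merely unpacked the same argument using $\Sigma(\delta)$ and the $\tau$-independence of the circle integrals, which is exactly what working on $\overline\Sigma$ encodes.
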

\begin{proof} This is an immediate consequence of Stokes' formula applied to the closed 1-form $w^*\lambda \circ j$ on
the real blow-up $\overline \Sigma$ of $\dot \Sigma$.
\end{proof}

Now we consider the  asymptotic Hick's field $\chi_\infty$ associated to the asymptotic
instanton $w_\infty$ obtained in the proof of Proposition \ref{prop:pole-structure}, and
call $\chi_\infty$ the asymptotic Hick's field of $w$ at the puncture $p$.
Because $w_\infty$ is massless and has bounded derivatives on $\R \times S^1$,
$\chi_\infty$ becomes a bounded holomorphic one-form. Therefore we derive
\be\label{eq:chiinfty}
\chi_\infty = c\, (d\tau + i \, dt)
\ee
for some complex number $c \in \C$. We denote $c = b + i a$ for $a, \, b \in \R$.
Equivalently, we obtain
$$
w^*\lambda = a\, d\tau + b\, dt.
$$
Here $a, \, b$ are nothing but the period integrals
$$
a = - \int_{S^1} (w(\tau,\cdot))^*\lambda \circ j, \, b =  \int_{S^1} (w(\tau,\cdot))^*\lambda
$$
which do not depend on $\tau$ for the massless instantons,
thanks to the closedness of $w^*\lambda, \, w^*\lambda \circ j$.
We denote them by $a = Q(p), \, b = T(p)$ and call them as the \emph{Hick's charge} at $p$.

%
%
We now examine the various cases arising depending on the constant $c$.
Let $\chi_\infty = c\, (d\tau + i \, dt)$ as above.

\begin{thm} \label{thm:c=0}
Suppose $c = 0$. Then $w$ is smooth across $p$ and so the
puncture $p$ is removable.
\end{thm}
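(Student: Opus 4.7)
The hypothesis $c=0$ says $Q(p)=T(p)=0$. My plan is to lift $w$ on a cylindrical neighborhood $[R,\infty)\times S^1$ of $p$ to a pseudo-holomorphic pair $(f,w):[R,\infty)\times S^1\to \R\times Q$ for the symplectization, where $f$ is the contact instanton potential of $w^*\lambda\circ j$; the lifting is available because $Q(p)=0$ makes $w^*\lambda\circ j$ exact on the end. Once $f$ is bounded and $|dw|$ is bounded, a Hofer-style removable singularity argument \cite{hofer} applies to conclude continuous extension, which is then upgraded to smooth extension by elliptic bootstrapping.

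First I would establish a uniform $C^0$-bound on $dw$ near $p$. Finiteness of $E^\pi(w)$ combined with the $\epsilon$-regularity theorem, Theorem \ref{thm:e-regularity}, gives such a bound: the $\pi$-energy on small disks $D_\delta(p)$ goes to zero, so $|dw|$ is controlled on $D_{\delta/2}(p)\setminus\{p\}$. Equivalently, any putative bubble $z_k\to p$ with $|dw(z_k)|\to\infty$ would rescale to a non-constant contact instanton $\C\to Q$ carrying $\pi$-energy at least $T_\lambda$ by Corollary \ref{cor:pi-positive}, contradicting the vanishing of the local $\pi$-energy at $p$.

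Second, I would show $f$ is bounded on $[R,\infty)\times S^1$. Writing $w^*\lambda=a_1\,d\tau+a_2\,dt$, Stokes applied to $d(w^*\lambda)=|d^\pi w|^2\,dA$ together with $T(p)=0$ yields $\int_{\{\tau\}\times S^1}a_2\,dt=-\int_{[\tau,\infty)\times S^1}|d^\pi w|^2\to 0$, while $Q(p)=0$ gives $\int_{\{\tau\}\times S^1}a_1\,dt=0$ identically. Then the $\tau$-average $F(\tau)=\frac{1}{2\pi}\int_{S^1}f(\tau,t)\,dt$ is monotone with $F'(\tau)\to 0$, and boundedness of $F$ reduces to $\int_R^\infty\int_{[\sigma,\infty)\times S^1}|d^\pi w|^2\,d\sigma<\infty$. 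This weighted estimate would follow from the finiteness of the $\lambda$-energy $E^\lambda(w)=\sup_{\varphi\in\CC}\int\varphi(f)\,|w^*\lambda|^2\,dA$: an unbounded $f$ together with nontrivial concentration of $|w^*\lambda|^2$ would, through a suitable choice of $\varphi$ concentrated in each slab of values of $f$, force $E^\lambda(w)=\infty$.

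With $|dw|$ and $f$ bounded, Theorem \ref{thm:subsequence-limit}(2) applies (since $c=0$) and every subsequential limit of $w(\tau_k+\cdot,\cdot)$ is a constant map. The asymptotic cluster set $\Lambda=\bigcap_\tau\overline{w([\tau,\infty)\times S^1)}$ is nonempty, compact, and connected; the uniform sup-norm decay of $|dw|$ on meridians (from $\int_{[\tau,\infty)\times S^1}|d^\pi w|^2\to 0$ combined with $\epsilon$-regularity) forces $\Lambda$ to be a single point $q_\infty$. Setting $w(p)=q_\infty$ extends $w$ continuously across $p$; elliptic bootstrapping via the local $W^{k,2}$-estimates of Theorem \ref{thm:Wk2} in a coordinate chart around $q_\infty$ then upgrades the extension to smoothness. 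The main obstacle is the second step, boundedness of $f$: the vanishing of $Q(p)$ and $T(p)$ controls only the mean drift of $f$, and the $\lambda$-energy bound is used essentially to preclude a Reeb-direction drift that would otherwise obstruct removability in the non-exact setting.
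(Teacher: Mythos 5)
Your proposal departs from the paper's proof in its scheme: you route through a symplectization lift $(f,w)$ and make boundedness of the contact--instanton potential $f$ the pivotal step, whereas the paper never leaves $Q$. The paper's argument is more economical: by Proposition~\ref{prop:pole-structure} (equivalently Theorem~\ref{thm:subsequence-limit}), the asymptotic massless instanton $w_\infty$ at $p$ has $w_\infty^*\lambda = a\,d\tau + b\,dt = Q(p)\,d\tau + T(p)\,dt$, so $c=0$ forces $dw_\infty = d^\pi w_\infty + (w_\infty^*\lambda)\,X_\lambda = 0$ and $w_\infty$ is constant. The $C^\infty_{loc}$-convergence of the rescaled translates then gives $w^*\lambda \to 0$ uniformly along the end, which together with $d^\pi w \to 0$ (from the vanishing $\pi$-energy tail) and the expression $dw = d^\pi w + w^*\lambda\, X_\lambda(w)$ bounds $dw$ near $p$. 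At that point the local $W^{k,2}$ a priori estimate of Theorem~\ref{thm:Wk2} bootstraps $w$ to a smooth map across $p$. No $f$ and no symplectization are invoked.

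The step you yourself flag as the main obstacle, boundedness of $f$, is the one with a genuine gap. Your claim that ``an unbounded $f$ together with nontrivial concentration of $|w^*\lambda|^2$ would, through a suitable choice of $\varphi$, force $E^\lambda(w)=\infty$'' requires that each slab $f^{-1}([-n-1,-n])$ carries a uniformly positive amount of the density $|w^*\lambda|^2\,dA$. Nothing in the hypotheses guarantees this: $|w^*\lambda|$ could decay quickly enough that every slab has arbitrarily small mass while $F(\tau)=\frac{1}{2\pi}\int_{S^1}f(\tau,\cdot)\,dt$ still drifts to $-\infty$. In fact the direction of implication you want is essentially the converse of Proposition~\ref{prop:proper-energy} (properness of $f$ implies $E(w) < \infty$), and that converse is not proved in the paper. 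Since the paper's own argument entirely bypasses this point, you should drop the $f$-boundedness claim and run the argument directly: uniform $|dw|$-bound from $\epsilon$-regularity (your step one), constancy of the subsequential limits from $c=0$, decay $w^*\lambda\to 0$ in $C^0$, and then the $W^{k,2}$ bootstrap. That gives a correct and considerably shorter proof closely matching the paper's.
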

\begin{proof} When $c = 0$, we obtain $dw_\infty = d^\pi w_\infty
+ \lambda^*w_\infty\, X_\lambda = 0$ and so
$w_\infty$ must be a constant map $q \in Q$. By the convergence $w_j \to w_\infty$ in compact $C^\infty$ topology,
it follows that $w_j(0,\cdot) \to q$ or equivalently
$$
d(w|_{r = \delta_j}, q) \to 0
$$
and $w_j^*\lambda \to 0$ converges uniformly. Using the compactness of $Q$ and applying Ascoli-Arzela theorem,
we can choose a sequence $z_i \to p$ in $D_\delta(p) \setminus \{p\}$ such that
$w(z_i) \to p$ and $w^*\lambda|_{r = \delta_j} \to 0$ uniformly. Then this
continuity of $w^*\lambda$ at $p$ in turn implies $dw$ is continuous at $p$
by the expression
$$
dw = d^\pi w  + w^*\lambda\, X_\lambda(w)
$$
In particular $|dw|_{D_\delta(r)}$ is bounded and so lies in
$L^2 \cap L^4$ on $D_\delta(r)$.  Then the local $W^{k,2}$ a priori estimate implies
that $w$ is indeed smooth across $p$. This finishes the proof.
\end{proof}

\medskip

If $c \neq 0$, we obtain
$$
\int_{S^1} \chi_\infty|_{\tau} \equiv c
$$
for all $\tau$. In particular, we derive
$$
\lim_{j \to \infty} \int_{S^1} (\chi|_{r = \delta_j})^*\lambda = c
$$
and so
\beastar
\lim_{k \to \infty}\int_{S^1} (w|_{r = \delta_k})^*\lambda \circ j & = & \Re c\\
\lim_{k \to \infty}\int_{S^1} (w|_{r = \delta_k})^*\lambda & = & \Im c.
\eeastar
In fact by the closedness of $w^*\lambda \circ j$ and convergence of $w|_{r = \delta_j} \to p$,
the integral  $(w|_{r = \delta_k})^*\lambda \circ j$ does not depend on $k$'s eventually.

We divide our consideration of the remaining cases into two different cases,
one with $b = \Im c = 0$ and the other with $b = \Im c \neq 0$.

\begin{prop}\label{prop:bnot0} Suppose $b \neq 0$. Then there exists a closed Reeb orbit $\gamma$ of
period $T = \frac{b}{2\pi}$ such that there exists a sequence $\tau_k \to \infty$
for which $w(\tau_k, \cdot) \to \gamma(T(\cdot))$ in $C^\infty$ topology.
\end{prop}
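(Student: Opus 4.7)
The plan is to invoke the subsequence convergence machinery from Theorem~\ref{thm:nondegeneratelimit} (and the $\epsilon$-regularity and $W^{k,2}$ results leading to it) to produce a limit $w_\infty:\R\times S^1\to Q$ of translates of $w$ near $p$, identify $w_\infty$ as a \emph{massless} contact instanton whose image lies in a single Reeb leaf, and then use the $t$-periodicity together with $b\neq 0$ to force this leaf to close up into an honest closed Reeb orbit. First, Lemma~\ref{lem:|dw|<infty} gives $|dw|_{C^0}<\infty$ on the cylindrical end $[R_0,\infty)\times S^1$ around $p$; for any $\tau_k\to\infty$ the translates $w_k(\tau,t):=w(\tau+\tau_k,t)$ have uniformly bounded $C^0$ derivatives and solve the contact instanton system. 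Theorem~\ref{thm:Wk2}, together with the finite-energy consequence $\int_{[\tau_k-K,\tau_k+K]\times S^1}|d^\pi w|^2\to 0$ for each fixed $K$, furnishes, after a diagonal subsequence, a limit $w_\infty$ in $C^\infty$ topology on compact subsets of $\R\times S^1$, still a contact instanton, with $E^\pi(w_\infty)=0$.

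Next, by Lemma~\ref{lem:massless} the vanishing of the $\pi$-energy forces $w_\infty$ to take values in a single leaf of the Reeb foliation and $w_\infty^*\lambda$ to be harmonic. Parameterizing the leaf by its Reeb flow $\gamma:\R\to Q$, $\gamma'(s)=X_\lambda(\gamma(s))$, we may write $w_\infty(\tau,t)=\gamma(s(\tau,t))$, and then $dw_\infty=\gamma'(s)\,ds$ yields $w_\infty^*\lambda=ds$. The asymptotic Hick's field computation recorded just before the proposition gives $w_\infty^*\lambda=a\,d\tau+b\,dt$ with $a=Q(p)$ and $b=T(p)$, so $s(\tau,t)=a\tau+bt+s_0$ for some $s_0\in\R$, and
\[
w_\infty(\tau,t)=\gamma\bigl(Q(p)\,\tau+T(p)\,t+s_0\bigr).
\]

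Finally, the $S^1$-periodicity $w_\infty(\tau,t+1)=w_\infty(\tau,t)$ forces $\gamma(s+T(p))=\gamma(s)$ for every $s$; since $b=T(p)\neq 0$, the leaf is therefore a closed Reeb orbit, of period $T=b/(2\pi)$ under the normalization $z=e^{-2\pi(\tau+it)}$ for the analytic coordinate at $p$. Restricting the $C^\infty$ convergence $w_k\to w_\infty$ to the slice $\{\tau=0\}$ and absorbing the constant phase $s_0+Q(p)\tau_k$ into an $S^1$-reparameterization of $\gamma$ (still a closed Reeb orbit of the same period) delivers the desired $C^\infty$ convergence $w(\tau_k,\cdot)\to\gamma(T\,\cdot)$ along the chosen subsequence. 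The only non-routine step is the compactness argument producing $w_\infty$ in $C^\infty$, which relies in an essential way on the local $W^{k,2}$ estimates of Theorem~\ref{thm:Wk2} and the $\epsilon$-regularity of Theorem~\ref{thm:e-regularity}; once these are in hand, the remainder is the direct identification of the Reeb parameterization combined with the periodicity in the $t$-variable.
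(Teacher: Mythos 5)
Your proof is correct and follows essentially the same route as the paper: the paper has already built up (in Proposition~\ref{prop:pole-structure} and the discussion around~\eqref{eq:chiinfty}) the massless limit $w_\infty$ with $w_\infty^*\lambda = a\,d\tau + b\,dt$, and its proof of Proposition~\ref{prop:bnot0} is just the short computation $dw_\infty = (a\,d\tau + b\,dt)\,X_\lambda$, the single-leaf observation, the Reeb parameterization $w_\infty(\tau,t)=\gamma(a\tau+bt)$, and periodicity in $t$ forcing $\gamma$ to close up. You redo the upstream compactness/masslessness step explicitly, but the key moves match the paper's. One cosmetic point: with $z=e^{-2\pi(\tau+it)}$ and $t\in\R/\Z$, the $t$-periodicity you derive actually gives $\gamma(s+b)=\gamma(s)$, i.e.\ period $b$, not $b/(2\pi)$; you have inherited the $2\pi$ normalization factor from the proposition's statement (which the paper's own proof and the subsequent remark ``$2\pi b = mT$'' handle somewhat inconsistently), so this is a discrepancy in the source rather than a gap in your argument.
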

\begin{proof}
When $b \neq 0$, we obtain
$$
dw_\infty = (a\, d\tau + b \, dt)\, X_\lambda.
$$
Again by the connectedness of $[0,\infty) \times S^1$, it follows that
the image of $w_\infty$ must be contained in a single leaf of the Reeb foliation
and so
$$
w_\infty(\tau,t) = \gamma(a\tau + b\, t)
$$
for a parameterized Reeb orbit $\gamma$ such that $\dot \gamma = X_\lambda(\gamma)$.
Such a parameterization is unique modulo the time-shift.
Since the map $w$ is one-periodic for any $\tau$, we derive
$$
\gamma(b) = \gamma(0).
$$
This implies first that $\gamma$ is a periodic Reeb orbit of period $g$.
\end{proof}

If we denote by $T > 0$ its minimal
period, then we obtain
$$
2\pi\, b = m\, T
$$
for some integer $m$. Since we assume $b \neq 0$, it follows that $m\, T  \neq 0$.

\begin{prop}\label{prop:b=0anot0} Suppose $b = 0, \, a\neq 0$.
Then $w_\infty$ does not depend on the $t$-variable and the map $\tau \to w_\infty(\tau)$ becomes a Reeb trajectory which
is not necessarily closed.
\end{prop}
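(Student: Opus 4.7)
The plan is to read off the two components of $dw_\infty$ with respect to the splitting $TQ=\R\{X_\lambda\}\oplus\xi$ and with respect to the basis $(d\tau,dt)$ on $\R\times S^1$, and observe that in the case $b=0,\ a\neq 0$ the $dt$-component vanishes identically while the $d\tau$-component equals $a\,X_\lambda(w_\infty)$.

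First I would combine the massless hypothesis with the asymptotic form of $\chi_\infty$. By Lemma~\ref{lem:massless}, applied to the limit instanton $w_\infty$ produced by Proposition~\ref{prop:pole-structure}, one has $d^\pi w_\infty=0$ and the image of $w_\infty$ lies in a single leaf of the Reeb foliation. Next, from \eqref{eq:chiinfty} together with the decomposition $\chi_\infty=w_\infty^*\lambda\circ j+\sqrt{-1}\,w_\infty^*\lambda$ and the substitution $c=b+ia$, I would read off
\begin{equation*}
w_\infty^*\lambda = a\,d\tau + b\,dt,\qquad w_\infty^*\lambda\circ j = b\,d\tau - a\,dt.
\end{equation*}
Imposing $b=0$ gives $w_\infty^*\lambda=a\,d\tau$.

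Then I would combine these with the general decomposition $dw_\infty=d^\pi w_\infty+w_\infty^*\lambda\,X_\lambda(w_\infty)$ to get
\begin{equation*}
dw_\infty \;=\; a\, X_\lambda(w_\infty)\,d\tau.
\end{equation*}
Evaluating on $\partial/\partial t$ yields $\partial_t w_\infty\equiv 0$, so $w_\infty$ depends only on $\tau$; evaluating on $\partial/\partial\tau$ yields $\partial_\tau w_\infty = a\,X_\lambda(w_\infty)$, so $\tau\mapsto w_\infty(\tau)$ is an integral curve of the vector field $a\,X_\lambda$, i.e.\ a reparametrization of a Reeb trajectory at constant speed $|a|\neq 0$.

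There is essentially no serious obstacle here once Lemma~\ref{lem:massless} and the explicit form \eqref{eq:chiinfty} are in hand; the argument is a direct calculation. The only point worth emphasizing is why the trajectory need not close up: in the companion case $b\neq 0$ of Proposition~\ref{prop:bnot0}, the $1$-periodicity of $w_\infty$ in $t$ forced $\gamma(b)=\gamma(0)$ and hence periodicity of the Reeb orbit, but in the present case the $t$-direction carries no information (as $w_\infty$ is $t$-independent) and the $\tau$-direction runs over all of $\R$ with no periodicity constraint. Hence the image is a, possibly non-closed, Reeb orbit segment traversed over all of $\R$, completing the proof.
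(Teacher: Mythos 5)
Your proof is correct and follows essentially the same route as the paper: read off $w_\infty^*\lambda = a\,d\tau$ from $\chi_\infty = c(d\tau+i\,dt)$ with $b=0$, then use the massless condition $d^\pi w_\infty = 0$ to deduce $dw_\infty = a\,X_\lambda(w_\infty)\,d\tau$, giving $t$-independence and the Reeb ODE in $\tau$. The only difference is presentational — you invoke Lemma~\ref{lem:massless} explicitly and spell out the real/imaginary decomposition of $\chi_\infty$, which the paper leaves implicit.
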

\begin{proof} In this case, $w_\infty^*\lambda = a\, d\tau$. Therefore $w_\infty$ does
not depend on $t$ and satisfies
$$
\frac{\del w_\infty}{\del \tau} = a\, X_\lambda(w(\tau,t))
$$
and so $w(\tau,t) \equiv z (a \tau)$ for a path satisfying $\dot z = X_\lambda(z)$.
This finishes the proof.
\end{proof}

\begin{rem}
\begin{enumerate}
\item  We would like to remark that all the above three scenarios
can actually occur and have to be examined in the asymptotic study of
contact instantons. For the exact case, we have $a = 0$.
\item Each massless contact instanton on $\R \times S^1$
induces a linear foliation thereon. When the charge is zero, the foliation becomes
the standard foliation but when the instanton carries a non-trivial charge
the `horizontal' foliation is skewed. This could be interpreted as the change of conformal
structure (or `gravity' by physical terms) of the cylinder that is
powered by non-trivial charge carried by the instanton.
This phenomenon seems to be worthwhile to further study which is a subject of future study.
\item Presence of the above non-trivial `spiraling' massless instantons on the cylinder which
does not exist in the exact case, makes the asymptotic study of contact instantons for the non-exact case
more complicated but also makes more interesting.
\end{enumerate}
\end{rem}

Now we are ready to define the notion of positive and negative
punctures of contact instanton map $w$. Assume $\lambda$ is nondegenerate.

Let $p$ be one of the punctures of $\dot \Sigma$.
In the disc $D_\delta(p) \subset \C$ with the standard orientation, we consider the function
$$
\int_{\del D_\delta(p)} w^*\lambda
$$
as a function of $\delta > 0$. This function is either decreasing or increasing
by the Stokes' formula, the positivity $w^*d\lambda \geq 0$ and the finiteness of
$\pi$-energy
$$
\frac{1}{2} \int_{\dot \Sigma} |d^\pi w|^2 = \int_{\dot \Sigma} w^*d\lambda < \infty.
$$
\begin{defn}[Classification of punctures]
Let $\dot \Sigma$ be a puncture Riemann
surface with punctures $\{p_1, \cdots, p_k\}$ and let
$w: \dot \Sigma \to Q$ be a contact instanton map.
\begin{enumerate}
\item We call a puncture $p$ \emph{removable} if $T(p) = Q(p) = 0$, and \emph{non-removable} otherwise.
Among the non-removable punctures $p$, we call it
\emph{non-adiabatic} if $T(p) \neq 0$, \emph{adiabatic} if $T(p) = 0$ but $Q(p) \neq 0$.
\item
We say a non-removable puncture \emph{positive} (resp. \emph{negative}) puncture if the function
$$
\int_{\del D_\delta(p)} w^*\lambda
$$
is increasing (resp. decreasing) as $\delta \to 0$.
\end{enumerate}
\end{defn}

The appearance of adiabatic punctures is a new phenomenon when the form $w^*\lambda \circ j$ is
not exact. In the latter case considered via
the case of symplectization picture \cite{hofer},
the associated puncture is removable and can be dropped in this classification by
removing the puncture. However in the non-exact case, such a puncture is
not necessarily removable and so has to be considered separately.

\section{Properness of contact instanton potential function and $\lambda$-energy}
\label{sec:pi-lambda-energy}

In this section, we examine the relationship between the $\pi$-energy,
the $\lambda$-energy and the contact instanton potential function $f$.

We first note that the function $f: \dot \Sigma \to \R$ is proper
if and only if
\be\label{eq:f-proper}
f(v_j) = \pm \infty
\ee
for all exterior vertex $v_j \in V(T)$.
One immediate corollary of Lemma \ref{lem:|dw|<infty}
 is the following $C^1$-bound of the contact potential function $f$.

\begin{cor} Suppose that $E(w) < \infty$ and let $f$ be the function defined
in section \ref{sec:offshellenergy}. Then $|df|_{C^0} < \infty$.
\end{cor}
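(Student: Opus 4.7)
The plan is to read off the bound directly from the defining formula
$df = w^*\lambda \circ j + \sum_{e \in E(T)} Q(w;e)\, dt_e$
for the contact instanton potential derived in Section \ref{sec:offshellenergy}. Estimating pointwise in the triad metric on $Q$ together with the minimal area metric on $\dot\Sigma$, one obtains
$|df|_{C^0} \le |w^*\lambda \circ j|_{C^0} + \sum_{e \in E(T)} |Q(w;e)|\,|dt_e|_{C^0}$,
so it suffices to bound each of the terms on the right.

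For the first term, the complex structure $j$ is a pointwise isometry of $T\dot\Sigma$, so $|w^*\lambda \circ j| = |w^*\lambda|$ pointwise. The splitting $TQ = \R\{X_\lambda\} \oplus \xi$ is orthogonal in the triad metric, so $|w^*\lambda| \le |dw|$ pointwise, and Lemma \ref{lem:|dw|<infty} gives $|dw|_{C^0} < \infty$ under the hypothesis $E(w) < \infty$. For the second term, the minimal area representation expresses $\dot\Sigma$ as a finite union of cylinders, each isometric to a standard flat cylinder on which $dt_e$ has constant norm in the minimal area metric; in particular $|dt_e|_{C^0}$ is finite and is uniformly bounded across the finite tree $T$. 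The charges $Q(w;e) = -\int_{S^1_e} w^*\lambda \circ j$ are determined by the puncture charges via the balancing condition \eqref{eq:balanceatv}, and each puncture charge is bounded by $2\pi\,|w^*\lambda|_{C^0} < \infty$. Combining these finite bounds gives $|df|_{C^0} < \infty$.

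There is no substantive obstacle here: once Lemma \ref{lem:|dw|<infty} is in hand, the corollary is a pointwise calculation using only the explicit formula for $df$. The one mild subtlety to point out in the write-up is that $df$ has possible Lipschitz singularities along the sewing seams of the minimal area decomposition, where the vector field $\partial/\partial t_e$ fails to be continuous; but this affects neither the a.e.\ definition of $df$ nor the supremum bound on $|df|$, since on each cylinder the norms $|dt_e|$ and $|d\tau_e|$ are bounded by a universal constant determined by the finite tree $T$.
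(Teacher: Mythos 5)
Your proposal is correct and follows essentially the same route as the paper: read the bound off the defining relation $df = w^*\lambda\circ j + \sum_{e\in E(T)} Q(w;e)\,dt_e$ and invoke Lemma \ref{lem:|dw|<infty} to control the right-hand side. You have merely fleshed out the details (the isometry of $j$, the $C^0$ bounds on $dt_e$ and $Q(w;e)$, and the remark about the seam singularities) behind the paper's one-line estimate $|df|_{C^0} \le |dw|_{C^0} + \max_{e\in E(T)}|Q(w;e)| < \infty$.
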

\begin{proof}
From Lemma \ref{lem:|dw|<infty} and the defining equation of $f$
$$
w^*\lambda \circ j + \sum_{e \in E(T)} Q(w;e)\, dt_e = df,
$$
we obtain $|df|_{C^0} < |dw|_{C^0} + \max_{e \in E(T)}|Q(w;e)| < \infty$.
\end{proof}

The following proposition is the analog to Lemma 5.15 \cite{behwz} whose proof is also
similar.

\begin{prop}\label{prop:proper-energy} Suppose that $E^\pi(w) < \infty$ and the function $f:\dot \Sigma \to \R$ is proper.
Then $E(w) < \infty$.
\end{prop}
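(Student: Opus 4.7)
The plan is to bound the $\lambda$-energy $E^\lambda(w)$ uniformly in the test function $\varphi\in\CC$, starting from the integration-by-parts identity that is available once the contact instanton equation is used. Fix $\varphi\in\CC$ and set $\psi(s):=\int_{-\infty}^s\varphi$, so that $\psi'=\varphi$ and $0\le\psi\le 1$. By definition of the contact instanton potential, $df = w^*\lambda\circ j + \sum_{e\in E(T)} Q(w;e)\,dt_e$, so that $df\circ j = -w^*\lambda + \sum_e Q(w;e)\,d\tau_e$. Since $d(w^*\lambda\circ j)=0$ by the contact instanton equation and $d\tau_e$ is exact on each cylinder, Lemma~\ref{lem:Qdt} (applied after extending the balancing condition \eqref{eq:balanceatv} to all interior edges) guarantees that no distributional contributions from the sewing seams survive, and one obtains
$$
d(df\circ j) = -dw^*\lambda
$$
as a current on $\dot\Sigma$. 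Consequently,
$$
\int_{\dot\Sigma}\varphi(f)\,df\wedge df\circ j \;=\; \int_{\dot\Sigma} d\bigl(\psi(f)\,df\circ j\bigr) + \int_{\dot\Sigma}\psi(f)\,dw^*\lambda.
$$

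The bulk term is immediately controlled: by Proposition~\ref{prop:energy-omegaarea} one has $dw^*\lambda = \tfrac12|d^\pi w|^2\,dA\geq 0$, so combining with $0\le\psi\le 1$ gives
$$
0 \;\le\; \int_{\dot\Sigma}\psi(f)\,dw^*\lambda \;\le\; \int_{\dot\Sigma} dw^*\lambda \;=\; 2E^\pi(w),
$$
which is finite by hypothesis. For the boundary term I will apply Stokes' theorem on the exhaustion $\Sigma_\delta:=\dot\Sigma\setminus\bigcup_i D_\delta(p_i)$ and send $\delta\to 0$. The properness of $f$ forces $f(p_i)=\pm\infty$ at every puncture: when $f(p_i)=-\infty$ the factor $\psi(f)$ vanishes near $p_i$, contributing nothing, and when $f(p_i)=+\infty$ one has $\psi(f)\equiv 1$ on $\partial D_\delta(p_i)$ for all small $\delta$. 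In the cylindrical coordinate around $p_i$, $d\tau_e$ integrates to zero over the meridian, so the only surviving boundary contribution at such a puncture is $-\lim_{\delta\to 0}\int_{\partial D_\delta(p_i)} w^*\lambda$. Applying Stokes on annuli and using $dw^*\lambda\ge 0$ and $E^\pi(w)<\infty$, the map $\delta\mapsto\int_{\partial D_\delta(p_i)} w^*\lambda$ is monotone and squeezed between $\int_{\partial D_1(p_i)}w^*\lambda$ and $\int_{\partial D_1(p_i)}w^*\lambda - 2E^\pi(w)$, hence converges to a finite limit $T^f(p_i)$.

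Summing over the finitely many punctures with $f(p_i)=+\infty$ yields a constant $C$ depending only on $E^\pi(w)$ and the circulations $\int_{\partial D_1(p_i)}w^*\lambda$, and gives
$$
\int_{\dot\Sigma}\varphi(f)\,df\wedge df\circ j \;\le\; 2E^\pi(w) + C
$$
uniformly in $\varphi\in\CC$; taking the supremum produces $E^\lambda(w)<\infty$, hence $E(w)<\infty$. The principal technical obstacle is the rigorous justification of the identity $d(df\circ j)=-dw^*\lambda$ as currents across the sewing seams of the minimal-area metric, where $\tau_e$ has jump discontinuities and the primitive $f$ itself is only continuous; this is exactly what Lemma~\ref{lem:Qdt} and the balancing condition on the internal edges are designed to handle, and by Proposition~\ref{prop:a-independent} the argument is moreover independent of the normalization chosen for $f$.
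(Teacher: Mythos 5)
Your proof is essentially correct and follows the same skeleton as the paper's (Stokes on an exhaustion, properness of $f$ to identify the surviving boundary contributions, monotonicity of $\delta\mapsto\int_{\partial D_\delta(p_i)}w^*\lambda$ from $dw^*\lambda\ge 0$), but you integrate by parts the form $\psi(f)\,df\circ j$, whereas the paper integrates by parts $\psi_n(f)\,w^*\lambda$ and passes to the limit through a truncated sequence $\varphi_n\in\CC$ with $\varphi_n\circ f=0$ near the punctures. Your variant has the small advantage of working directly with the off-shell quantity $\varphi(f)\,df\wedge df\circ j$ appearing in the definition of $E^\lambda$, so the sign-bookkeeping is cleaner.

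The one place where your justification is off is the identity $d(df\circ j)=-dw^*\lambda$ as currents. What you actually need is that $\sum_{e}Q(w;e)\,d\tau_e$ be closed as a current; you attribute this to Lemma~\ref{lem:Qdt} and the balancing condition~\eqref{eq:balanceatv}, but that lemma is about $\sum_e Q(e)\,dt_e$, not $\sum_e Q(e)\,d\tau_e$, and the balancing condition is what cancels the transverse jumps of the $dt_e$'s. For $\sum_e Q(e)\,d\tau_e$ the closedness is actually automatic and needs no balancing: testing against a compactly supported $g$,
\begin{equation*}
\Bigl\langle d\Bigl(\sum_e Q(e)\,d\tau_e\Bigr),g\Bigr\rangle
= -\sum_e Q(e)\int_{\mathrm{cyl}_e} dg\wedge d\tau_e
= \sum_e Q(e)\int_0^{\ell(e)}\Bigl(\int_{S^1}\partial_t g\,dt\Bigr)\,d\tau = 0,
\end{equation*}
since $t$ is periodic on each full cylinder. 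So the conclusion you rely on is true, but the mechanism is different from (and simpler than) what you cite; the paper avoids the issue entirely by never differentiating $df\circ j$. Also a small factor: $\int_{\dot\Sigma}dw^*\lambda = E^\pi(w)$, not $2E^\pi(w)$ (by Proposition~\ref{prop:energy-omegaarea}(2) on shell), though this does not affect finiteness.
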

\begin{proof}
Since $f$ is assumed to be proper, $f(r_\ell) = \pm \infty$
for each puncture $r_\ell$ of $\dot \Sigma$ depending on whether the puncture is positive or
negative.

The rest of the argument is very similar to that of the proof of Lemma 5.15 \cite{behwz}
with replacement of $a$ and the equation $dw^*\lambda \circ j = da$ therein by $f$
and the equation
$$
dw^*\lambda \circ j + \sum_{e \in E(T)} Q(w;e)\, dt_e = df
$$
respectively in our current context. (We would also like point out that \cite{behwz}
used the letter `$f$' for the map $w$ which should not confuse the readers with
our notation $f$ for the function which corresponds to $a$ in their notation.)

Since our setting does not use the setting of symplectization, we provide the full details of
the proof in Appendix.
\end{proof}

By the same argument as the derivation of Lemma 5.16 \cite{behwz}, we obtain
\begin{lem} Suppose $E^\pi(w) < \infty$ and $f$ is proper. Denote by
$\gamma^+_1, \cdots, \gamma^+_k$ (resp. $\gamma^-_1,\cdots, \gamma^-_\ell$)
the periodic orbits of $X_\lambda$ asymptotic to the positive (resp. negative punctures) of
$\dot \Sigma$. Then
\beastar
E^\pi(w) & = & \sum_{j=1}^k \int \overline\gamma_j^*\lambda - \sum_{i=1}^\ell \int \underline\gamma_i^*\lambda\\
E^\lambda(w) & = & \sum_{j=1}^k \int \overline\gamma_j^*\lambda \\
E(w) & = & 2 \sum_{j=1}^k \int \overline\gamma_j^*\lambda - \sum_{i=1}^\ell \int \underline\gamma_i^*\lambda.
\eeastar
\end{lem}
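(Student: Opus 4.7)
The approach is to adapt the strategy of Lemma 5.16 in \cite{behwz} to the present non-symplectized setting, where the contact instanton potential $f$ of Section \ref{sec:offshellenergy} plays the role of the symplectization height function $a$. I would split the computation into the $\pi$-part and the $\lambda$-part.

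For $E^\pi(w)$, the on-shell identity $|d^\pi w|^2\,dA = 2w^*d\lambda$ from Proposition \ref{prop:energy-omegaarea}(2) (which uses $\bar\partial^\pi w = 0$) yields $E^\pi(w) = \int_{\dot\Sigma} w^*d\lambda$. Applying Stokes' theorem on the real blow-up $\bar\Sigma$ reduces this to $\sum_p \int_{\partial_{\infty,p}\bar\Sigma} w^*\lambda$. The hypothesis $E^\pi(w)<\infty$ together with properness of $f$ upgrades to $E(w)<\infty$ via Proposition \ref{prop:proper-energy}, so the asymptotic convergence theorem (Theorem \ref{thm:nondegeneratelimit}) applies at every puncture and identifies each boundary integral with the period of the limiting Reeb orbit, carrying a $+$ sign at positive punctures and a $-$ sign at negative punctures (the sign reflecting the reversed boundary orientation). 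This yields the first formula.

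For $E^\lambda(w) = \sup_{\varphi\in\CC}\int d(\psi(f))\wedge df\circ j$ I would integrate by parts. Using the global representation $df\circ j = -w^*\lambda + \sum_{e\in E(T)}Q(w;e)\,d\tau_e$ from Section \ref{sec:offshellenergy}, and the closedness of each $d\tau_e$ on its cylinder, one obtains $d(df\circ j) = -dw^*\lambda$ distributionally on $\dot\Sigma$. Stokes' theorem on $\bar\Sigma$ then gives
\[
\int_{\dot\Sigma} d(\psi(f))\wedge df\circ j \;=\; \int_{\partial\bar\Sigma}\psi(f)\,df\circ j \;+\;\int_{\dot\Sigma}\psi(f)\,dw^*\lambda.
\]
Since $f$ is proper with $f\sim T(p)\tau$ near each puncture, $\psi(f)\to 1$ at every $\partial_{\infty,p^+}\bar\Sigma$ (where $f\to +\infty$) and $\psi(f)\to 0$ at every $\partial_{\infty,p^-}\bar\Sigma$ (where $f\to -\infty$). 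Evaluating the boundary integrand via $df\circ j=-w^*\lambda + Q(p)\,d\tau$ together with the vanishing of $\int d\tau$ along each asymptotic circle collapses the boundary contribution to the positive-puncture sum $\sum_{j=1}^k \int\overline\gamma_j^*\lambda$. The bulk integral is controlled by $E^\pi(w)<\infty$, and by concentrating $\varphi\in\CC$ sharply near $+\infty$ the factor $\psi(f)$ can be driven to zero on any preassigned compact subset of $\dot\Sigma$ while preserving $\psi(\pm\infty)$; dominated convergence then kills the bulk in the supremum limit, producing $E^\lambda(w)=\sum_{j=1}^k \int\overline\gamma_j^*\lambda$. The third identity is obtained by addition.

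The main obstacle is the careful sign- and orientation-bookkeeping along $\partial\bar\Sigma$, together with justifying the distributional integration by parts across the Lipschitz sewing seams of the minimal-area metric representation of $(\dot\Sigma,j)$---one needs to verify that the identity $d(df\circ j)=-dw^*\lambda$ holds across the seams with no singular contributions that would spoil Stokes. A subsidiary technical issue is that the supremum defining $E^\lambda$ is not attained by any individual $\varphi\in\CC$, so the argument must proceed by constructing explicit sequences of increasingly concentrated test functions and taking a limit.
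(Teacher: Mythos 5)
The paper does not supply a proof of this lemma at all; it simply states ``By the same argument as the derivation of Lemma 5.16 \cite{behwz}, we obtain\dots''. Your proposal is an honest attempt to reconstruct that argument, and its overall architecture is correct and matches what the paper intends: Stokes applied to $w^*d\lambda$ for $E^\pi$, then an integration by parts of $\varphi(f)\,df\wedge df\circ j$ into a boundary term (detected by $\psi(f)\to 1$ at positive ends, $\psi(f)\to 0$ at negative ends) plus a bulk remainder, with the bulk to be eliminated by letting the test profile $\varphi$ slide off to infinity.

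There is, however, a genuine inconsistency in the middle of your argument that you flag but do not resolve, and it is not merely ``careful bookkeeping'': your own stated formulas make the bulk term have the \emph{wrong sign} for the conclusion. You invoke $df\circ j=-w^*\lambda+\sum_e Q(w;e)\,d\tau_e$ (the paper's Section~\ref{sec:offshellenergy} expression), hence $d(df\circ j)=-dw^*\lambda$, and integration by parts gives
\[
\int_{\dot\Sigma} d(\psi(f))\wedge df\circ j \;=\; \int_{\partial\overline\Sigma}\psi(f)\,df\circ j \;+\;\int_{\dot\Sigma}\psi(f)\,w^*d\lambda .
\]
Since $\psi$ ranges monotonically from $0$ to $1$ and $w^*d\lambda=\tfrac12|d^\pi w|^2\,dA\geq 0$ on shell, the second term is \emph{nonnegative}, and the boundary term is independent of $\varphi$. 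Concentrating $\varphi$ near $+\infty$ drives the bulk to $0$, which \emph{minimizes} the integral; the supremum over $\varphi\in\CC$ is instead approached by sliding $\varphi$ toward $-\infty$ so that $\psi(f)\to 1$, giving $\sum_j\int\overline\gamma_j^*\lambda+E^\pi(w)$, not $\sum_j\int\overline\gamma_j^*\lambda$. So the step ``dominated convergence then kills the bulk in the supremum limit'' does not follow from the displayed identity: it produces a lower bound, not the claimed equality. For the argument to close, the bulk must enter with a $\emph{minus}$ sign, i.e.\ one needs $d(df\circ j)=+w^*d\lambda$, in which case the bulk is $\leq 0$, the supremum is $\leq\sum_j\int\overline\gamma_j^*\lambda$ for every $\varphi$, and your concentration argument then shows it is attained in the limit. (This sign is precisely what happens in BEHWZ, where the integrand is $\varphi(a)\,da\wedge w^*\lambda$ and the bulk after integrating by parts is $-\int\psi(a)\,w^*d\lambda\leq 0$.) You should trace through whether the discrepancy is an inconsistency you inherited from the paper's own formula for $df\circ j$---which appears to conflict with the paper's simultaneous claim that $\varphi(f)\,df\wedge df\circ j\geq 0$---or an error in your application of it; either way, as written your proof establishes only the inequality $E^\lambda(w)\geq\sum_j\int\overline\gamma_j^*\lambda$, not the equality. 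The treatment of the $Q\,d\tau_e$ pieces on the boundary (they drop out because $\tau_e$, hence $d\tau_e$, integrates to zero over each meridian while $\psi(f)$ is asymptotically constant there) and the caution about distributional Stokes across the Lipschitz sewing seams are both appropriate and correctly identified as the remaining technical points.
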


\section{Calculation of the linearization map with contact triad connection}
\label{sec:linearization-map}

Let $\Sigma$ be a closed Riemann surface and $\dot \Sigma$ be its
associated punctured Riemann surface.  We allow the set of whose punctures
to be empty, i.e., $\dot \Sigma = \Sigma$.
We would like to regard the assignment
$$
w \mapsto \left(\delbar^\pi w, d(w^*\lambda \circ j)\right)
$$
for a map $w: \dot \Sigma \to Q$ as a section of the (infinite dimensional) vector bundle
over the space of maps of $w$. In this section, we lay out the precise relevant off-shell framework
of functional analysis.

Let $(\dot \Sigma, j)$ be a punctured Riemann surface, the set of whose punctures
may be empty, i.e., $\dot \Sigma = \Sigma$ is either a closed or a punctured
Riemann surface. We will fix $j$ and its associated K\"ahler metric $h$.

We consider the map
$$
\Upsilon(w) = \left(\delbar^\pi w, d(w^*\lambda \circ j) \right)
$$
which defines a section of the vector bundle
$$
\CH \to \CF = C^\infty(\Sigma,Q)
$$
whose fiber at $w \in C^\infty(\Sigma,Q)$ is given by
$$
\CH_w: = \Omega^{(0,1)}(w^*\xi) \oplus \Omega^2(\Sigma).
$$
We decompose $\Upsilon = (\Upsilon_1,\Upsilon_2)$ where
\be\label{eq:upsilon1}
\Upsilon_1: \Omega^0(w^*TQ) \to \Omega^{(0,1)}(w^*\xi); \quad \Upsilon_1(w) = \delbar^\pi(w)
\ee
and
\be\label{eq:upsilon2}
\Upsilon_2: \Omega^0(w^*TQ) \to \Omega^2(\dot \Sigma); \quad \Upsilon_2(w) = d(w^*\lambda \circ j).
\ee

We first compute the linearization map which defines a linear map
$$
D\Upsilon(w): \Omega^0(w^*TQ) \to \Omega^{(0,1)}(w^*\xi) \oplus \Omega^2(\Sigma)
$$
where we have
$$
T_w \CF = \Omega^0(w^*TQ).
$$
We note
\beastar
\operatorname{rank}\Lambda^0(w^*TQ) & = & 2n+1 \\
\operatorname{rank} \Lambda^{(0,1)}(w^*\xi) \oplus \Lambda^2(\Sigma) & = & 2n+1.
\eeastar
For the optimal expression of the linearization map and its relevant
calculations, we use the contact triad connection $\nabla$ of $(Q,\lambda,J)$ and the contact
Hermitian connection $\nabla^\pi$ for $(\xi,J)$ introduced in \cite{oh-wang2}.

\begin{thm}\label{thm:linearization} In terms of the decomposition $d\pi = d^\pi w + w^*\lambda\, X_\lambda$
and $Y = Y^\pi + \lambda(Y) X_\lambda$, we have
\bea
D\Upsilon_1(w)(Y) & = & \delbar^{\nabla^\pi}Y^\pi + B^{(0,1)}(Y^\pi) +  T^{\pi,(0,1)}_{dw}(Y^\pi)\\
&{}& \quad + \frac{1}{2}\lambda(Y) (\CL_{X_\lambda}J)J(\del^\pi w)
\label{eq:Dwdelbarpi}\\
D\Upsilon_2(w)(Y) & = &  - \Delta (\lambda(Y))\, dA + d((Y^\pi \rfloor d\lambda) \circ j)
\label{eq:Dwddot}
\eea
where $B^{(0,1)}$ and $T_{dw}^{\pi,(0,1)}$ are the $(0,1)$-components of $B$ and
$T_{dw}^{\pi,(0,1)}$, where $B, \, T_{dw}^\pi: \Omega^0(w^*TQ) \to \Omega^1(w^*\xi)$ are
 zero-order differential operators given by
$$
B(Y) =
- \frac{1}{2}  w^*\lambda \left((\CL_{X_\lambda}J)J Y\right)
$$
and
$$
T_{dw}^\pi(Y) = \pi T(Y,dw)
$$
respectively.
\end{thm}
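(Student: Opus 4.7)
The plan is to take a one-parameter family of maps $w_s$ with $w_0=w$ and $\frac{d}{ds}\big|_{s=0}w_s=Y$, split $Y=Y^\pi+\lambda(Y)X_\lambda$, and compute the $s$-derivative of each component of $\Upsilon(w_s)$ using the contact triad connection $\nabla$ of \cite{oh-wang1}. The calculation leans on the four defining features of $\nabla$ recalled in that reference: (i) $\nabla X_\lambda=0$, (ii) $\nabla\lambda=0$, (iii) $\nabla$ preserves the splitting $TQ=\R\{X_\lambda\}\oplus\xi$ and restricts to the contact Hermitian connection $\nabla^\pi$ on $\xi$, and (iv) an explicit formula for the torsion $T$, in particular for $T(X_\lambda,\cdot)|_\xi$ in terms of $(\CL_{X_\lambda}J)J$. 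A useful observation to be used repeatedly is that the endomorphism $(\CL_{X_\lambda}J)J$ is $J$-antilinear, which is immediate from $\CL_{X_\lambda}(J^2)=0$; consequently it interchanges the $(1,0)$ and $(0,1)$ components of $\xi$-valued one-forms.

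For $D\Upsilon_2$ I first compute $\frac{d}{ds}\big|_{s=0}w_s^*\lambda$ by (locally) extending $Y$ to an ambient vector field and applying Cartan's magic formula, yielding $w^*(\CL_Y\lambda)=d(\lambda(Y))+w^*(Y\rfloor d\lambda)$; the result is independent of the extension since only $Y$ itself and $\nabla Y$ enter. Because $X_\lambda\rfloor d\lambda=0$, the interior product kills the Reeb component of $Y$, and the second piece becomes $(Y^\pi\rfloor d\lambda)$, interpreted as a one-form on $\Sigma$ via $dw$. Composing with $j$ and differentiating, the identity $dd^c f=-\Delta f\,dA$ (valid for any Kähler metric $h$ on $(\Sigma,j)$) identifies the first term with $-\Delta(\lambda(Y))\,dA$, while the second term yields exactly $d((Y^\pi\rfloor d\lambda)\circ j)$.

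For $D\Upsilon_1$ I apply the standard commutation rule for a two-parameter family with torsion,
\[
\tfrac{d}{ds}\big|_{s=0}dw_s(v)=\nabla_v Y+T(Y,dw(v)).
\]
Because $\pi$ is parallel, $\frac{d}{ds}\pi dw_s=\pi(\nabla Y)+\pi T(Y,dw)$; using $\nabla X_\lambda=0$ and the splitting one obtains $\pi\nabla_v Y=\nabla^\pi_v Y^\pi$, which upon taking the $(0,1)$-part provides the $\delbar^{\nabla^\pi}Y^\pi$ term. The torsion contribution $\pi T(Y,dw)$ is then expanded according to the decompositions $Y=Y^\pi+\lambda(Y)X_\lambda$ and $dw=d^\pi w+w^*\lambda\,X_\lambda$. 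The $(Y^\pi,d^\pi w)$ and $(Y^\pi,w^*\lambda\,X_\lambda)$ cross-terms together reconstitute $T^\pi_{dw}(Y^\pi)=\pi T(Y^\pi,dw)$, and, after plugging in the triad torsion formula for $T(X_\lambda,\cdot)|_\xi$, produce precisely the combination $T^{\pi,(0,1)}_{dw}(Y^\pi)+B^{(0,1)}(Y^\pi)$ on passing to the $(0,1)$-part, with $B(Y^\pi)$ carrying the $w^*\lambda$-factor. The $(\lambda(Y)X_\lambda,d^\pi w)$ term, processed via the same torsion formula, yields $\tfrac12\lambda(Y)(\CL_{X_\lambda}J)J\,\pi dw$, whose $(0,1)$-part is $\tfrac12\lambda(Y)(\CL_{X_\lambda}J)J\,\del^\pi w$ by the $J$-antilinearity observation above.

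Finally, for the $J$-variation piece, $\frac{d}{ds}\delbar^\pi w_s=(\frac{d}{ds}\pi dw_s)^{(0,1)}+\frac12(\nabla_Y J)(\pi dw)\circ j$; the property $\nabla^\pi J=0$ on $\xi$ forces the second summand to vanish (or to be absorbed consistently with the previous step, depending on the normalization of $\nabla_{X_\lambda}J$ in \cite{oh-wang1}), so the calculation of the previous paragraph is the complete answer. The main obstacle is therefore not any single hard estimate but the careful sign and component bookkeeping in Paragraph~3: one must match the four sub-terms of the expanded torsion $\pi T(Y,dw)$ to the specific operators $T^{\pi,(0,1)}_{dw}$, $B^{(0,1)}$ and $\tfrac12\lambda(Y)(\CL_{X_\lambda}J)J\,\del^\pi w$ exactly as stated, and this in turn depends on invoking the precise torsion identity of the contact triad connection that is tailored to make the $\delbar^\pi$ linearization take this clean form.
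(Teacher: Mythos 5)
Your overall strategy (differentiate $\Upsilon(w_s)$ along a family $w_s$ and process the result through the contact triad connection) is the same as the paper's, and your computation of $D\Upsilon_2$ via Cartan's formula together with $dd^c = -\Delta\,dA$ is precisely what the paper does. However, the $D\Upsilon_1$ computation rests on properties of the contact triad connection that it does not have, and this derails the bookkeeping in your Paragraph~3.

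Specifically, you list $\nabla X_\lambda = 0$ as a defining property and then assert that $\pi$ (i.e.\ the idempotent $\Pi$) is parallel. For the Oh--Wang contact triad connection, neither is true. The paper cites Equations (5.2)--(5.3) of \cite{oh-wang1}, which say
$\Pi(\nabla\Pi)|_\xi = 0$ and $(\nabla\Pi)X_\lambda = -\Pi\nabla X_\lambda = -\Pi\,\bigl(\tfrac12(\CL_{X_\lambda}J)J\bigr)$,
so $\nabla_Z X_\lambda = \tfrac12(\CL_{X_\lambda}J)JZ \neq 0$ for $Z\in\xi$, and consequently $\nabla\Pi \neq 0$. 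These two nonzero quantities are exactly what produces the two extra terms in the statement: the Leibniz term $\pi(\nabla_Y\Pi)(w^*\lambda\,X_\lambda)$ yields $B(Y) = -\tfrac12 w^*\lambda\bigl((\CL_{X_\lambda}J)JY\bigr)$, and $\pi\nabla(\lambda(Y)X_\lambda) = \lambda(Y)\nabla X_\lambda$ yields $\tfrac12\lambda(Y)(\CL_{X_\lambda}J)J\,\pi dw$, whose $(0,1)$-part is $\tfrac12\lambda(Y)(\CL_{X_\lambda}J)J\del^\pi w$. You instead attribute both of these terms to cross-components of the torsion $\pi T(X_\lambda,\cdot)$. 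That would require a different torsion identity than the triad connection possesses, and it also conflicts with your own first step: if $\nabla X_\lambda$ really were zero and $\Pi$ were parallel, the quantity $\pi\nabla_v Y$ would indeed collapse to $\nabla^\pi_v Y^\pi$, but then there would be no source for the $B^{(0,1)}$ term at all other than the torsion, while simultaneously you also claim the $(Y^\pi,d^\pi w)$ and $(Y^\pi,w^*\lambda X_\lambda)$ cross-terms ``reconstitute $T^\pi_{dw}(Y^\pi)$'' and ``produce $T^{\pi,(0,1)}_{dw}(Y^\pi)+B^{(0,1)}(Y^\pi)$'' --- that is double bookkeeping, since $T^\pi_{dw}(Y^\pi)=\pi T(Y^\pi,dw)$ already is the sum of those two cross-terms. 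To fix the argument you should drop the hypothesis $\nabla X_\lambda = 0$, keep the Leibniz expansion $\nabla_s(\Pi\, dw_s) = (\nabla_s\Pi)(dw_s) + \Pi\nabla_s(dw_s)$, and use the two cited identities for $\nabla\Pi$ and $\nabla X_\lambda$ to produce the $B$ and $\tfrac12\lambda(Y)(\cdot)\del^\pi w$ pieces, leaving $T^\pi_{dw}$ purely as the commutator torsion of the two-parameter family.
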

\begin{proof}
Let $Y$ be a vector field over $w$ and $w_s$ be a family of maps $w_s: \Sigma \to Q$ with
$w_0 = w$ and $Y = \frac{d}{ds}\Big|_{s = 0} w^s$, and $a = \frac{d\gamma}{dt}\Big|_{t = 0}$
for a curve $\gamma$ with $\gamma(0) = z$. We decompose
$$
Y = Y^\pi + \lambda(Y)\, X_\lambda
$$
into the sum of $\xi$-component and $X_\lambda$-component.
Now we calculate
\be\label{eq:Dwdpi}
D_w(d^\pi)(Y): = \nabla^\pi_s(\pi dw_s)\Big|_{s=0}
= \pi \nabla_s(\pi dw_s)\Big|_{s=0}
\ee
We will evaluate
\beastar
\nabla^\pi_s(\pi dw_s)& = & \pi \nabla_s (\Pi dw_s) \\
& = & \pi (\nabla_s \Pi)(dw_s) + \pi \nabla_s (dw_s).
\eeastar
To evaluate this, we recall the following basic identity

\begin{lem}[Equations (5.2) \& (5.3) \cite{oh-wang1}] Let $\nabla$ be the
contact triad connection. Then
\be\label{eq:PinabalPi}
\Pi (\nabla \Pi) Y = 0
\ee
for all $Y \in \xi$, and
\be\label{eq:nablaPiX}
(\nabla \Pi) X_\lambda = - \Pi \nabla X_\lambda = - \Pi \left(\frac{1}{2} (\CL_{X_\lambda}J)J\right).
\ee
\end{lem}
Using this lemma, we compute
\bea\label{eq:pinablaPidws}
\pi (\nabla_s \Pi)(dw_s) & = & \pi (\nabla_s \Pi)(d^\pi w_s + w_s^*\lambda\, X_\lambda) \nonumber\\
& = & \pi (\nabla_s \Pi)(w_s^*\lambda\, X_\lambda) = w_s^*\lambda\, \pi (\nabla_s \Pi)(X_\lambda)\nonumber\\
& = & -  w_s^*\lambda\, \pi \left(\frac{1}{2} (\CL_{X_\lambda}J)JY\right).
\eea
Next, the standard computation of $\nabla_s (dw_s)|_{s = 0}$ gives rise to
\bea\label{eq:pinabladws}
\pi \nabla_s (dw_s)|_{s=0}(a) & = & \pi \nabla_s \left(dw_s\left(\frac{d\gamma}{dt}\right)\right)
\Big|_{(s,t) = (0,0)} \nonumber \\
& = & \pi \nabla_s \frac{d}{dt} (w_s\circ \gamma)\Big|_{(s,t) = (0,0)} \nonumber\\
& = & \pi(\nabla_a Y + T(Y,dw(a))\nonumber\\
& = & \pi(\nabla_a Y)  + \pi(T(Y,dw(a)).
\eea
 On the other hand, we compute
\beastar
\pi(\nabla_a Y) & = & \pi (\nabla_a Y^\pi + \nabla_a(\lambda(Y)\, X_\lambda) \\
& = & \nabla_a^\pi Y^\pi  + \lambda(Y) \nabla_a X_\lambda \\
& = & \nabla_a^\pi Y^\pi  + \lambda(Y) \nabla_{d^\pi w(a)} X_\lambda \\
& = & \nabla_a^\pi Y^\pi + \frac{1}{2} \lambda(Y) (\CL_{X_\lambda}J)J d^\pi w(a)
\eeastar
where we used the formula $\nabla X_\lambda = \frac{1}{2} (\CL_{X_\lambda} J)J$ for
the second equality. This proves
$$
\pi(\nabla Y) =  \nabla^\pi Y^\pi + \frac{1}{2} \lambda(Y) (\CL_{X_\lambda}J)J d^\pi w.
$$
Substituting this into \eqref{eq:pinabladws}, we derive
$$
\pi \nabla_s (dw_s)|_{s=0} =
\nabla^\pi Y^\pi + \frac{1}{2} \lambda(Y) (\CL_{X_\lambda}J)J d^\pi w.
$$
Combining this with \eqref{eq:pinablaPidws}, we obtain
$$
\nabla^\pi_s(\pi dw_s)|_{s=0} = \nabla^\pi Y^\pi + T^\pi(Y,dw)+ \frac{1}{2} \lambda(Y) (\CL_{X_\lambda}J)J  \pi d w
-  w^*\lambda \left(\frac{1}{2} (\CL_{X_\lambda}J)J Y\right).
$$
Therefore we have derived
\beastar
D_w(d^\pi)(Y) & = & \nabla^\pi_s(\pi dw_s)|_{s=0} \\
& = & \nabla^\pi Y^\pi + T^\pi(Y,dw) + \frac{1}{2} \lambda(Y) \pi (\CL_{X_\lambda}J)J dw
- \frac{1}{2} w^*\lambda \left((\CL_{X_\lambda}J)J Y\right).
\eeastar
We note that
\beastar
\frac{1}{2} \left(\lambda(Y) (\CL_{X_\lambda}J)J  \pi dw\right)^{(0,1)}
& = & \frac{1}{2} \lambda(Y)\left(\frac{\CL_{X_\lambda} J)J \pi dw + J (\CL_{X_\lambda}J)J \pi dw\circ j}{2}\right)\\
& = & \frac{1}{2} \lambda(Y) \CL_{X_\lambda}J J\left(\frac{\pi dw - J \pi dw\circ j}{2}\right)\\
& = & \frac{1}{2} \lambda(Y) (\CL_{X_\lambda}J)J \del^\pi w
\eeastar
where $\del^\pi w = (\pi dw)^{(1,0)}$.
By taking the $(0,1)$-projection,
we have proved \eqref{eq:Dwdelbarpi}.

Next we compute $D\Upsilon_2(w)$ and prove \eqref{eq:Dwddot}.
We compute $\frac{d}{ds}|_{s= 0} d(w_s^*\lambda \circ j)$
\be\label{eq:nablas}
\frac{d}{ds}\Big|_{s = 0} d(w_s^*\lambda\circ j)
= d\left(\frac{d}{ds}\Big|_{s = 0}w_s^*\lambda \circ j\right).
\ee
By Cartan's formula applied to the \emph{vector field $Y$ over the map $w$},
we obtain
$$
\frac{d}{ds}\Big|_{s = 0} w_s^*\lambda = Y \rfloor d\lambda + d(Y \rfloor \lambda)
$$
where $\rfloor$ is the interior product over the map $w$.
Substituting this into \eqref{eq:nablas}, we derive
\beastar
\frac{d}{ds}\Big|_{s = 0} d(w_s^*\lambda\circ j) & =  &
d(d(\lambda(Y))\circ j) + d((Y \rfloor d\lambda) \circ j) \\
& = & - \Delta (\lambda(Y))\, dA + d((Y \rfloor d\lambda) \circ j).
\eeastar

This proves
\be\label{eq:linearizedUpsilon2}
D\Upsilon_2(w)(Y) = - \Delta (\lambda(Y))\, dA + d((Y \rfloor d\lambda) \circ j)
= - \Delta (\lambda(Y))\, dA + d((Y^\pi \rfloor d\lambda) \circ j)
\ee
which finishes the proof of Theorem \ref{thm:linearization}.
\end{proof}

Now we evaluate the $D\Upsilon_1(w)$
more explicitly. We have
$$
\delbar^{\nabla^\pi}Y = \frac{1}{2} \left(\nabla^\pi Y + J \nabla^\pi_{j(\cdot)} Y \right)
$$
and $B^{(0,1)}(Y)$ becomes
$$
-\frac{1}{4}\left(w^*\lambda\,  \pi ((\CL_{X_\lambda}J)J Y) + w^*\lambda \circ j\, \pi (\CL_{X_\lambda}J)Y \right).
$$

\section{Fredholm theory and index calculations}
\label{sec:fredholm}

We divide our discussion into the closed case and the punctured case.

\subsection{The closed case}

We start with the following classification result. This is stated by Abbas as a part of \cite[Proposition 1.4]{abbas}.
A somewhat different proof is also given in \cite{oh-wang2}. (See Proposition 3.3 \cite{oh-wang2}.)

\begin{prop}\label{prop:classify} Assume $w:\Sigma\to M$ is a smooth contact instanton from a closed Riemann surface.
Then
\begin{enumerate}
\item If $g(\Sigma)=0$, $w$ can only be a constant map;
\item If $g(\Sigma)\geq 1$, $w$ is either a constant or has its locus of its image
is a \emph{closed} Reeb orbit.
\end{enumerate}
In particular, any such instanton is massless and satisfies $[w] = 0$ in $H_2(Q;\Z)$.
\end{prop}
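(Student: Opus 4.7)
The plan is to exploit the two equations $\delbar^\pi w = 0$ and $d(w^*\lambda\circ j) = 0$ to force $w^*\lambda$ to be harmonic and $d^\pi w$ to vanish, and then to use the topology of the target Reeb foliation together with $H^1(\Sigma)$ to conclude.

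First I would observe that because $\Sigma$ is closed and $\delbar^\pi w = 0$, Proposition \ref{prop:energy-omegaarea}(2) gives
\[
2\, w^*d\lambda \;=\; |\del^\pi w|^2 \, dA \;=\; |d^\pi w|^2\, dA \;\geq 0,
\]
while Stokes' theorem yields $\int_\Sigma w^* d\lambda = \int_\Sigma d(w^*\lambda) = 0$. Consequently $d^\pi w \equiv 0$, i.e.\ $dw = w^*\lambda \cdot X_\lambda(w)$, so the values of $dw$ are everywhere Reeb-parallel. By connectedness of $\Sigma$, the image of $w$ lies in a single leaf $L$ of the Reeb foliation. Furthermore $d(w^*\lambda) = w^*d\lambda = 0$, and since on a Riemann surface with the K\"ahler metric the Hodge star of a one-form is given (up to sign) by composition with $j$, the equation $d(w^*\lambda\circ j) = 0$ is equivalent to $\delta(w^*\lambda) = 0$. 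Combining the two gives that $w^*\lambda$ is a \emph{harmonic} $1$-form on $\Sigma$.

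For (1), when $g(\Sigma) = 0$ we have $H^1(\Sigma;\R) = 0$, so the only harmonic $1$-form is zero, hence $w^*\lambda \equiv 0$ and together with $d^\pi w = 0$ this forces $dw \equiv 0$; thus $w$ is constant.

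For (2), when $g(\Sigma) \geq 1$, parametrize the leaf $L$ by $\gamma$, either by $\R$ (non-closed leaf) or by $S^1$ (closed Reeb orbit), and write $w = \gamma \circ b$ for some map $b$ from $\Sigma$ to $\R$ or $S^1$ respectively, determined up to the period. In either case $w^*\lambda = db$ locally. If $L$ is non-closed, then $b:\Sigma \to \R$ is a genuine function on the closed surface $\Sigma$, with $db$ harmonic; hence $b$ is a harmonic function, therefore constant, so $w$ is constant. If $L$ is a closed Reeb orbit, then $b$ is only $S^1$-valued and $w^*\lambda$ can be a nonzero harmonic $1$-form representing a nontrivial class; in that case the image of $w$ is literally contained in the closed orbit $L$, as claimed. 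The vanishing $[w] = 0 \in H_2(Q;\Z)$ is then automatic: in both alternatives the image of $w$ is at most one-dimensional, so $w_*[\Sigma] = 0$.

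The only real obstacle is the dichotomy in step (2) between closed and non-closed leaves; everything else is bookkeeping once one extracts the harmonicity of $w^*\lambda$ and the vanishing of $d^\pi w$. The harmonic-function argument on a closed surface cleanly rules out the non-closed alternative, and it is precisely the existence of nonzero harmonic $1$-forms for $g \geq 1$ that permits the nontrivial case of wrapping around a closed Reeb orbit.
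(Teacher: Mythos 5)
Your argument is correct, and it follows the natural line one would take from the structure equations: integrate $2w^*d\lambda = |d^\pi w|^2\,dA$ over the closed $\Sigma$ and use Stokes to kill $d^\pi w$, deduce that $w$ maps into a single Reeb leaf and that $w^*\lambda$ is harmonic (closed by $d^\pi w=0$, coclosed by $d(w^*\lambda\circ j)=0$), and then read off the conclusion from the Hodge theory of $\Sigma$. The genus-$0$ case and the $H_2$-triviality and masslessness claims are all handled cleanly. The paper itself only cites Abbas and \cite{oh-wang2} for this proposition, so there is no in-text proof to compare against, but your route is the expected one.

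The one place you should be more explicit is the dichotomy in case (2). You assert that if the leaf $L$ is non-closed then $b:\Sigma\to\R$ is a genuine (single-valued) function, but this deserves a sentence of justification: a non-closed Reeb leaf is an \emph{injectively immersed} copy of $\R$, so $\gamma^{-1}$ is set-theoretically single-valued and $b=\gamma^{-1}\circ w$ has no monodromy; local smoothness then follows from the flow-box structure of the Reeb foliation (on a neighborhood of $q_0=w(x_0)$ the map $w$ stays in a single plaque because $d^\pi w=0$). Equivalently, if some period $\int_c w^*\lambda=T\neq 0$ then the flow continuation forces $\gamma(s+T)=\gamma(s)$ and the leaf must be closed. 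Either phrasing closes the gap. With that added, the proof is complete and self-contained.
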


From the expression of the map $\Upsilon = (\Upsilon_1,\Upsilon_2)$, the map defines a bounded linear map
\be\label{eq:dUpsilon}
D\Upsilon(w): \Omega^0_{k,p}(w^*TQ) \to \Omega^{(0,1)}_{k-1,p}(w^*\xi) \oplus \Omega^2_{k-2,p}(\Sigma).
\ee
We choose $k \geq 2, \, p > 2$. Recalling the decomposition
$$
Y = Y^\pi + \lambda(Y)\, X_\lambda,
$$
we have the decomposition
$$
\Omega^0_{k,p}(w^*TQ) \cong \Omega^0_{k,p}(w^*\xi) \oplus \Omega^0_{k,p}(\dot \Sigma,\R)\cdot X_\lambda.
$$
Here we use the splitting
$$
TQ = \span_\R\{X_\lambda\} \oplus \xi
$$
where $\span_\R\{X_\lambda\}: = \CL$ is a trivial line bundle and so
$$
\Gamma(w^*\CL) \cong C^\infty(\Sigma).
$$
By definition as the linearization operator $D\Upsilon_2(w)$ acts trivially for the section
$Y$ tangent to the Reeb direction.

It follows that the map $D\Upsilon(w)$ is a partial differential operator whose
symbol map is given by $\sigma(D\Upsilon) = \sigma(D\Upsilon_1) \oplus \sigma(D\Upsilon_2)$
where
\bea\label{eq:symbol}
\sigma(D\Upsilon_1(w))(\eta) & = & J\Pi^*\eta \nonumber\\
\sigma(D\Upsilon_2(w))(\eta) & = & \langle \lambda,\eta\rangle^2 = (\eta(X_\lambda))^2
\eea
where $\eta$ is a cotangent vector in $T^*Q \setminus \{0\}$ and
has decomposition
$$
\eta = \eta^\pi + \eta(X_\lambda(\pi(\eta))\,\lambda(\pi(\eta)).
$$
Therefore $D\Upsilon(w)$ can be written into the matrix form
\be\label{eq:matrixDUpsilon}
\left(\begin{matrix}\delbar^{\nabla^\pi} + T_{dw}^{\pi,(0,1)} + B^{(0,1)}
 & \frac{1}{2} \lambda(\cdot) (\CL_{X_\lambda}J)J \del^\pi w \\
d\left((\cdot) \rfloor d\lambda) \circ j\right) & -\Delta(\lambda(\cdot)) \,dA
\end{matrix}
\right)
\ee
where
\beastar
\delbar^{\nabla^\pi} + B^{(0,1)}& : & \Omega^0_{k,p}(w^*\xi) \to
\Omega^{(0,1)}_{k-1,p}(w^*\xi)\\
-* \Delta & : & \Omega^0_{k,p}(\Sigma) \to \Omega^2_{k-2,p}(\Sigma)\\
d\left((\cdot) \rfloor d\lambda) \circ j\right) &: & \Omega^0_{k,p}(w^*\xi)
 \to
\Omega^2_{k-1,p}(\Sigma) \hookrightarrow \Omega^2_{k-2,p}(\Sigma).
\eeastar
In particular we note that the restriction $D\Upsilon_1(w)|_{\Omega^0(w^*\xi)}$ has the same
symbol as that of
$$
\delbar^{\nabla^\pi} : \Omega^0(w^*\xi) \to \Omega^{(0,1)}(w^*\xi)
$$
which is the first order elliptic operator of Cauchy-Riemann type, and
$D\Upsilon_2(w)$ has the symbol of the Hodge Laplacian acting on zero forms
$$
*\Delta: \Omega^0(\Sigma) \to \Omega^2(\Sigma).
$$

We now establish Fredholm property and the index formula of the operator $D\Upsilon(w)$
by dividing the study into the closed and the punctured cases.

For the closed case, we derive

\begin{prop}\label{prop:closed-fredholm} Consider the completion of $D\Upsilon(w)$,
which we still denote by $D\Upsilon(w)$, as a bounded linear map
from $\Omega^0_{k,p}(w^*TQ)$ to $\Omega^{(0,1)}(w^*\xi)\oplus \Omega^2(\Sigma)$
for $k \geq 2$ and $p \geq 2$. Then the operator $D\Upsilon(w)$ is homotopic to the operator
\be\label{eq:diagonal}
\left(\begin{matrix}\delbar^{\nabla^\pi} + T_{dw}^{\pi,(0,1)}+ B^{(0,1)} & 0 \\
0 & -\Delta(\lambda(\cdot)) \,dA
\end{matrix}
\right)
\ee
via the homotopy
\be\label{eq:s-homotopy}
s \in [0,1] \mapsto \left(\begin{matrix}\delbar^{\nabla^\pi} + T_{dw}^{\pi,(0,1)} + B^{(0,1)}
& \frac{s}{2} \lambda(\cdot) (\CL_{X_\lambda}J)J (\pi dw)^{(1,0)} \\
s\, d\left((\cdot) \rfloor d\lambda) \circ j\right) & -\Delta(\lambda(\cdot)) \,dA
\end{matrix}
\right) =: L_s
\ee
which is a continuous family of Fredholm operators. And the principal symbol
$$
\sigma(z,\eta): w^*TQ|_z \to w^*\xi|_z \oplus \Lambda^2(T_z\Sigma), \quad 0 \neq \eta \in T^*_z\Sigma
$$
of \eqref{eq:diagonal} is given by the matrix
\beastar
\left(\begin{matrix} \frac{\eta + i\eta \circ j}{2} Id  & 0 \\
0 & |\eta|^2
\end{matrix}\right)
\eeastar
after applying the isomorphism $*: \Omega^2(\Sigma) \to \Omega^0(\Sigma)$ and so is elliptic.
\end{prop}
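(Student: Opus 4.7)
The plan is to present $D\Upsilon(w) = L_1$ as the endpoint of the explicit straight-line homotopy $s \mapsto L_s$ connecting it to the block-diagonal operator $L_0$, and to argue that $L_s$ is a norm-continuous family of Fredholm operators by showing $L_s - L_0$ is a compact operator depending linearly on $s$. First I would verify Fredholmness of $L_0$ block by block. The upper-left block is $\delbar^{\nabla^\pi}$ twisted by the zero-order terms $T_{dw}^{\pi,(0,1)} + B^{(0,1)}$, and is therefore a generalized Cauchy-Riemann operator on the Hermitian bundle $(w^*\xi, J)$, so it is Fredholm from $\Omega^0_{k,p}(w^*\xi)$ to $\Omega^{(0,1)}_{k-1,p}(w^*\xi)$ over the closed surface $\Sigma$. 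The lower-right block $f \mapsto -\Delta f \, dA$ factors as the classical scalar Laplacian followed by the constant isomorphism with $\Omega^2(\Sigma)$, hence is Fredholm from $\Omega^0_{k,p}(\Sigma)$ to $\Omega^2_{k-2,p}(\Sigma)$, with both kernel and cokernel equal to the constants.

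The key observation is that the off-diagonal entries of $L_s$ are of \emph{strictly lower} order than the target regularity allows. The $(1,2)$-entry $\tfrac{s}{2}\lambda(\cdot)(\CL_{X_\lambda}J)J(\del^\pi w)$ is algebraic in $\lambda(Y)$, so it factors as $\Omega^0_{k,p}(\Sigma) \to \Omega^{(0,1)}_{k,p}(w^*\xi) \hookrightarrow \Omega^{(0,1)}_{k-1,p}(w^*\xi)$, with the last inclusion compact by Rellich-Kondrachov on the compact $\Sigma$. The $(2,1)$-entry $s\, d(((\cdot) \rfloor d\lambda) \circ j)$ is first-order in $Y^\pi$, so it factors as $\Omega^0_{k,p}(w^*\xi) \to \Omega^2_{k-1,p}(\Sigma) \hookrightarrow \Omega^2_{k-2,p}(\Sigma)$, again with compact last inclusion. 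Since both off-diagonal entries are linear in $s$, the map $s \mapsto L_s - L_0$ is continuous (in fact affine) in the operator norm, and each $L_s$ is a compact perturbation of the Fredholm operator $L_0$, hence Fredholm with constant index along the homotopy.

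For the principal symbol of $L_0$, I would read it off directly from the formulas. The zero-order terms contribute nothing. The symbol of $\delbar^{\nabla^\pi} = \tfrac{1}{2}(\nabla^\pi + J \nabla^\pi_{j \cdot})$ at a covector $\eta \in T^*_z\Sigma$ is $\tfrac{1}{2}(\eta + i\eta \circ j)\,\operatorname{Id}$ on the complex Hermitian bundle $(w^*\xi,J)$. Composing the lower-right block with $*:\Omega^2(\Sigma) \to \Omega^0(\Sigma)$ converts $-\Delta f\, dA$ into the scalar Laplacian, whose principal symbol is $|\eta|^2$ in the analyst's convention. This yields exactly the displayed diagonal symbol matrix, and invertibility for $\eta \neq 0$ is immediate: $\eta + i\eta\circ j$ is the $(0,1)$-part of $\eta$ under $j$ and is nonzero whenever $\eta \neq 0$, while $|\eta|^2 \neq 0$ for $\eta \neq 0$ as well. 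I do not anticipate any serious obstacle; the only care needed is to keep track of the asymmetry between the two rows of the codomain (loss of one derivative versus two derivatives), which is precisely what makes the off-diagonal coupling into a compact perturbation rather than a leading-order contribution to the symbol.
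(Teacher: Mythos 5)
Your argument is correct, and it rests on exactly the same structural observation the paper uses: the off-diagonal entries of $L_s$ lose one fewer (resp.\ one more) derivative than the target spaces allow, so they factor as bounded operators into $\Omega^{(0,1)}_{k,p}(w^*\xi)$ and $\Omega^2_{k-1,p}(\Sigma)$ followed by the compact inclusions $\Omega^{(0,1)}_{k,p}(w^*\xi)\hookrightarrow\Omega^{(0,1)}_{k-1,p}(w^*\xi)$ and $\Omega^2_{k-1,p}(\Sigma)\hookrightarrow\Omega^2_{k-2,p}(\Sigma)$ over the compact surface. The only difference is the packaging. The paper proves a uniform semi-Fredholm coercive estimate
\begin{equation*}
\|Y\|_{k,p}\le C\bigl(\|\pi_1(L_sY)\|_{k-1,p}+\|\pi_1(K_sY)\|_{k-1,p}+\|\pi_2(L_sY)\|_{k-2,p}+\|\pi_2(K_sY)\|_{k-2,p}\bigr)
\end{equation*}
with an explicitly constructed $s$-family of compact operators $K_s$ obtained by concatenating the off-diagonal blocks and the lower-order Sobolev inclusions with these compact embeddings; you instead observe directly that $L_s-L_0$ is itself compact (for the same reason) and invoke stability of the Fredholm property and the index under compact perturbation, starting from the block-diagonal $L_0$ whose two blocks are Fredholm by the standard theory for Cauchy--Riemann operators and the scalar Laplacian on a closed surface. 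Your version is shorter, makes norm-continuity of $s\mapsto L_s$ trivially affine, and also makes the finite-dimensionality of the cokernel transparent (something the paper's semi-Fredholm inequality by itself only gives for the kernel and range). The symbol computation is identical in both.
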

\begin{proof}
It is enough to establish the inequality
\bea\label{eq:Ls}
\|Y\|_{k,p} & \leq & C (\|\pi_1(L_s(Y))\|_{k-1,p}+\|\pi_1(K_s(Y))\|_{k-1,p})\nonumber\\
&{}& \quad +\|\pi_2(L_s(Y))\|_{k-2,p}+\|\pi_2(K_s(Y))\|_{k-2,p})
\eea
for a family of compact operators $K_s: \Omega^0_{k,p}(w^*TQ)
\to \Omega^{(0,1)}_{k-1,p}(w^*\xi) \oplus \Omega^2_{k-2,p}(\Sigma)$
and a constant $C$ independent of $s \in [0,1]$ for all $Y \in \Omega_{k,p}(w^*TQ)$.

We decompose $Y = Y^\pi + \lambda(Y)\, X_\lambda$.
We have already computed above
\beastar
\pi_1(L_s(Y)) &= &(\delbar^{\nabla^\pi} + T_{dw}^{\pi,(0,1)} + B^{(0,1)})(Y^\pi)
+ \frac{s}{2}\lambda(Y)(\CL_XJ)J (\pi dw)^{(1,0)}\\
\pi_2(L_s(Y)) & = & s\, d(Y \rfloor d\lambda)\circ j - \Delta (\lambda(Y))\,dA.
\eeastar
By the ellipticity of $\delbar^{\nabla^\pi}  + T_{dw}^{\pi,(0,1)} + B^{(0,1)}: \Omega^0(w^*\xi) \to \Omega^{(0,1)}(w^*\xi)$
and of $\Delta: \Omega^0(\Sigma) \to \Omega^0(\Sigma)$, we have
\be\label{eq:|Ypi|}
\|Y^\pi\|_{k,p} \leq C (\|(\delbar^{\nabla^\pi}  + T_{dw}^{\pi,(0,1)}+ B^{(0,1)})(Y^\pi)\|_{k-1,p} + \|Y^\pi\|_{k-1,p})
\ee
and
\be\label{eq:|lambdaY|}
\|\lambda(Y)\|_{k,p} \leq C(\|\Delta(\lambda(Y))\|_{k-2,p} + \|\lambda(Y)\|_{k-2,p}).
\ee
Then we get
\beastar
&{}& \|\lambda(Y)(\CL_XJ)J (\pi dw)^{(1,0)}\|_{k-1,p}\nonumber \\
& \leq & C_k (\|(\CL_XJ)J (\pi dw)^{(1,0)}\|_{k-2,\infty}\|\lambda(Y)\|_{k-1,p} +  \|(\CL_XJ)J (\pi dw)^{(1,0)}\|_{k-1,\infty}\|\lambda(Y)\|_{k-2,p})\\
& \leq & C_k \|(\CL_XJ)J (\pi dw)^{(1,0)}\|_{k-2,\infty}\left(C(\|\Delta(\lambda(Y))\|_{k-2,p}
 + \|\lambda(Y)\|_{k-2,p}\right)
\eeastar
(Here the last line can be improved by $k-3$ for $k \geq 3$ but $k-2$ will be enough
for our purpose which we have to use anyway for $k = 2$),
and
$$
\|d(Y \rfloor d\lambda)\circ j\|_{k-2,p} \leq C_k (\|Y^\pi\|_{k-1,p} \|d\lambda\|_{k-2,\infty} + \|Y^\pi\|_{k-1,p}\|d\lambda\|_{k-1,\infty})
$$
for some constant $C_k$ depending only on $k$ (and $dw$) but independent of $Y$.
Combining all the above, using the bounds for $\|(\CL_XJ)J (\pi dw)^{(1,0)}\|_{k-2,\infty}$ and
$\|d\lambda\|_{k-1,\infty}$ and substituting
$$
(\delbar^{\nabla^\pi} + T_{dw}^{\pi,(0,1)}+ B^{(0,1)})(Y^\pi) = \pi_1(L_s(Y)) -
 \frac{s}{2}\lambda(Y)(\CL_XJ)J (\pi dw)^{(1,0)}
$$
and
$$
- \Delta(\lambda(Y))\, dA = \pi_2(L_s(Y)) - s\, d(Y \rfloor d\lambda)\circ j
$$
into \eqref{eq:|Ypi|} and \eqref{eq:|lambdaY|} and then rearranging terms,
we derive
\be\label{eq:Ykp}
\|Y\|_{k,p} \leq C (\|\pi_1(L_s(Y))\|_{k-1,p}+ \|Y^\pi\|_{k-1,p}+ \|\pi_2(L_s(Y))\|_{k-2,p}
+\|Y\|_{k-2,p})
\ee
for a constant $C$ independent of $s \in [0,1]$ for all $Y \in \Omega_{k,p}(w^*TQ)$. By the
compactness of the Sobolev embedding $W^{l,p}$ into $W^{l-1,p}$ for $l = k, \, k-1$
(on compact $\Sigma$), we have finished the proof of \eqref{eq:Ls} by taking the
operator $K_s = K_{1,s} + K_{2,s}$: Here $K_{1,s}$ is the composition of the bounded map
$$
\Omega^0_{k,p}(w^*TQ) \to \Omega^{(0,1)}_{k,p}(w^*\xi)\oplus
\Omega^2_{k-1,p}(\Sigma)
$$
defined by
$$
Y \mapsto \left(\begin{matrix} \frac{s}{2}\lambda(Y)(\CL_XJ)J (\pi dw)^{(1,0)}\\
s\, d\left((\cdot) \rfloor d\lambda) \circ j\right)
\end{matrix}
\right)
$$
and the inclusion map
$$
\Omega^{(0,1)}_{k,p}(w^*\xi)\oplus
\Omega^2_{k-1,p}(\Sigma) \to \Omega^{(0,1)}_{k-1,p}(w^*\xi)\oplus
\Omega^2_{k-2,p}(\Sigma)
$$
which is compact. In particular, $K_{1,s}$ is a compact operator.

And we define $K_{2,s}$ is just the inclusion map
$$
\Omega^0_{k,p}(w^*TQ) \cong \Omega^0_{k,p}(w^*\xi) \oplus \Omega^0_{k,p}(\Sigma)
\hookrightarrow \Omega^0_{k-1,p}(w^*\xi) \oplus \Omega^0_{k-2,p}(\Sigma)
$$
which is also compact. Obviously
$$
\|Y^\pi\|_{k-1,p} +\|\lambda(Y)\|_{k-2,p} \leq \|\pi_1(K_{2,s}(Y))\|_{k-1,p} + \|\pi_2(K_{2,s}(Y)\|_{k-2,p}.
$$
Therefore combining all the above, we have established \eqref{eq:Ls} which
 finishes the proof.
\end{proof}

From this, we immediately derive the following index formula for $D\Upsilon(w)$ from the homotopy
invariance of the index

\begin{thm}\label{thm:index} Let $\Sigma$ be any closed Riemann surface
of genus $g$, and let $w: \Sigma \to Q$ be a solution to \eqref{eq:contact-instanton}
with finite energy. Then the operator \eqref{eq:dUpsilon} is a Fredholm operator
whose index is given by
\be\label{eq:indexwhen0}
\operatorname{Index} D\Upsilon(w) =  2n(1-g).
\ee
\end{thm}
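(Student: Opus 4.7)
My plan is to reduce the computation of $\operatorname{Index} D\Upsilon(w)$ to the index of the block-diagonal operator provided by Proposition \ref{prop:closed-fredholm}, then compute the two diagonal indices separately, and finally use Proposition \ref{prop:classify} to eliminate the would-be Chern number term.

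First I would invoke Proposition \ref{prop:closed-fredholm} together with homotopy invariance of the Fredholm index: the family $L_s$ is continuous in $s \in [0,1]$ in the norm topology on bounded operators and Fredholm for every $s$, so $\operatorname{Index} D\Upsilon(w) = \operatorname{Index} L_0$, where
$$
L_0 = \begin{pmatrix} \delbar^{\nabla^\pi} + T_{dw}^{\pi,(0,1)} + B^{(0,1)} & 0 \\ 0 & -\Delta \end{pmatrix}
$$
is block-diagonal with respect to the splittings $\Omega^0_{k,p}(w^*TQ) \cong \Omega^0_{k,p}(w^*\xi) \oplus \Omega^0_{k,p}(\Sigma)$ and $\Omega^{(0,1)}_{k-1,p}(w^*\xi) \oplus \Omega^2_{k-2,p}(\Sigma)$. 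Its index splits as
$$
\operatorname{Index} L_0 = \operatorname{Index}\bigl(\delbar^{\nabla^\pi} + T_{dw}^{\pi,(0,1)} + B^{(0,1)}\bigr) + \operatorname{Index}(-\Delta).
$$

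Next I would compute the two summands. The zero-order terms $T_{dw}^{\pi,(0,1)} + B^{(0,1)}$ define a compact operator between the relevant Sobolev completions and so do not affect the index, reducing the first summand to the index of the Cauchy-Riemann operator $\delbar^{\nabla^\pi}$ on the complex rank-$n$ bundle $w^*\xi \to \Sigma$. By Riemann-Roch,
$$
\operatorname{Index}_\R \delbar^{\nabla^\pi} = 2 n (1-g) + 2 c_1(w^*\xi)[\Sigma].
$$
For the second summand, identifying $\Omega^2(\Sigma) \cong \Omega^0(\Sigma)$ via the Hodge star, the operator $-\Delta: W^{k,p}(\Sigma) \to W^{k-2,p}(\Sigma)$ is the standard scalar Laplacian on a closed Riemann surface, whose kernel and cokernel both consist of constants, giving index $0$.

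Finally I would use Proposition \ref{prop:classify} to kill the Chern term. Since $\Sigma$ is closed, any finite-energy contact instanton is either constant or has image contained in a single closed Reeb orbit, so its image is at most one-dimensional and thus $w_*[\Sigma] = 0$ in $H_2(Q;\Z)$. Naturality of Chern classes then yields
$$
c_1(w^*\xi)[\Sigma] = \bigl\langle c_1(\xi), w_*[\Sigma]\bigr\rangle = 0,
$$
so $\operatorname{Index} D\Upsilon(w) = 2n(1-g) + 0 + 0$, which is the desired formula. The only step requiring any real work has already been packaged into Proposition \ref{prop:closed-fredholm}; once the homotopy is in hand, everything else is routine Riemann-Roch and Hodge theory on a closed surface, with Proposition \ref{prop:classify} providing the geometric input that makes the answer independent of $w$.
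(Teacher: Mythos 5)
Your proposal is correct and follows essentially the same route as the paper: homotope $D\Upsilon(w)$ to the block-diagonal operator via $L_s$ from Proposition~\ref{prop:closed-fredholm}, split the index, compute $\operatorname{Index}\delbar^{\nabla^\pi}$ by Riemann--Roch and $\operatorname{Index}(-\Delta)=0$ by Hodge theory, and then use Proposition~\ref{prop:classify} to kill the $2c_1(w^*\xi)$ term. The only cosmetic difference is that the paper's proof of the theorem uses a one-parameter homotopy that scales only the lower-left off-diagonal entry (still upper-triangular, hence index-additive), whereas you invoke the cleaner fully block-diagonal endpoint $L_0$ already established in Proposition~\ref{prop:closed-fredholm}; you also spell out explicitly that the zero-order terms $T_{dw}^{\pi,(0,1)}+B^{(0,1)}$ are compact perturbations, a step the paper leaves implicit. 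Both choices are fine.
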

\begin{proof}

We already know that the operators $\delbar^{\nabla^\pi} +  T_{dw}^{\pi,(0,1)} + B^{(0,1)}$ and
$-\Delta$ are Fredholm. Furthermore we can homotope the operator
\eqref{eq:matrixDUpsilon} to the direct sum operator
$$
(\delbar^{\nabla^\pi}  + T_{dw}^{\pi,(0,1)} + B^{(0,1)} + \frac{1}{2} \lambda(\cdot) (\CL_{X_\lambda}J)J \del^\pi w \oplus (-* \Delta(\lambda(\cdot)))
$$
by considering the continuous deformation of Fredholm operators
$$
s \mapsto \left(\begin{matrix}\delbar^{\nabla^\pi}  + T_{dw}^{\pi,(0,1)} + B^{(0,1)} & \frac{1}{2} \lambda(\cdot) (\CL_{X_\lambda}J)J \del^\pi w \\
s \, d\left((\cdot) \rfloor d\lambda) \circ j\right) & -* \Delta(\lambda(\cdot))
\end{matrix}
\right)
$$
from $s =1$ to $s = 0$.
From this, the Fredholm property immediately follows. Then the index is given by
$$
\operatorname{Index}\delbar^{\nabla^\pi} + \operatorname{Index} (-\Delta) = 2c_1(w^*\xi) + 2n(1-g)
+ 0 = 2c_1(w^*\xi) + 2n(1-g)
$$
in general. But since  $[w] = 0$ in $H_2(Q;\Z)$ by Proposition \ref{prop:classify}, this is reduced to
\eqref{eq:indexwhen0}. This finishes the proof.
\end{proof}

We would like to call attention of readers that the index $\operatorname{Index}\delbar^{\nabla^\pi} = 2n$
when $g = 0$ is $1$ smaller than the dimension of $Q$.

\subsection{The punctured case}

For the punctured case, we need to make some preparation.
For the exposition of this section, we adapt the exposition given by Bourgeois and Mohnke in
\cite{bourg-mohnke} to the current context of contact Cauchy-Riemann maps.
Because the structure of the linearization of \eqref{eq:contact-instanton} is significantly
different, establishing the Fredholm property of the linearization map and its index calculation
is also different. In particular, a priori the ellipticity itself of the linearization map
is not obvious.

From now on in the rest of the paper, we will restrict ourselves to the case of vanishing charge,
i.e., we put the following hypothesis.
\begin{hypo}[Charge vanishing]\label{hypo:exact}
We assume the asymptotic charges of $w$ at all ends vanish, i.e.,
\be\label{eq:asymp-a=0}
-a = \lim_{\tau \to \infty} \int_{\del_\ell \Sigma(\rho)} w(\tau,\cdot)^*\lambda \circ j = 0
\ee
for all $\ell =1, \cdots, k$ where $\rho = e^{-2\pi\tau}$.
\end{hypo}

Let $(\dot \Sigma, j)$ be a punctured Riemann surface and let
$$
p_1, \cdots, p_{s^+}, q_1, \cdots, q_{s^-}
$$
be the positive and negative punctures. Fix an elongation function $\rho: \R \to [0,1]$
so that
\beastar
\rho(\tau) & = & \begin{cases} 1 \quad & \tau \geq 1 \\
0 \quad & \tau \leq 0
\end{cases} \\
0 & \leq & \rho'(\tau) \leq 2.
\eeastar

Let $\gamma^+_i$ for $i =1, \cdots, s^+$ and $\gamma^-_j$ for $j = 1, \cdots, s^-$
be two given collections of Reeb orbits. For each $p_i$ (resp. $q_j$), we associate the isothermal
coordinates $(\tau,t) \in [0,\infty) \times S^1$ (resp. $(\tau,t) \in (-\infty,0] \times S^1$)
on the punctured disc $D_{e^{-2\pi R_0}}(p_i) \setminus \{p_i\}$
(resp. on $D_{e^{-2\pi R_0}}(q_i) \setminus \{q_i\}$) for some sufficiently large $R_0 > 0$.
Then we consider sections of $w^*TQ$ by
\be\label{eq:barY}
\overline Y_i = \rho(\tau - R_0) X_\lambda(\gamma^+_k(t)),\quad
\underline Y_j = \rho(\tau + R_0) X_\lambda(\gamma^+_k(t))
\ee
and denote by $\Gamma_{s^+,s^-} \subset \Gamma(w^*TQ)$ the subspace defined by
$$
\Gamma_{s^+,s^-} = \bigoplus_{i=1}^{s^+} \R\{\overline Y_i\} \oplus \bigoplus_{j=1}^{s^-} \R\{\underline Y_j\}.
$$
Let $k \geq 2$ and $p > 2$. We denote by
$$
\CW^{k,p}_\delta(\dot \Sigma, Q;J;\gamma^+,\gamma^-), \quad k \geq 2
$$
the Banach manifold such that
\be\label{eq:limatinfty}
\lim_{\tau \to \infty}w((\tau,t)_i) = \gamma^+_i(T_i(t+t_i)), \quad
\lim_{\tau \to - \infty}w((\tau,t)_j) = \gamma^-_j(T_j(t-t_j))
\ee
for some $t_i, \, t_j \in S^1$, where
$$
T_i = \int_{S^1} (\gamma^+_i)^*\lambda, \, T_j = \int_{S^1} (\gamma^-_j)^*\lambda.
$$
Here $t_i,\, t_j$ depends on the given analytic coordinate and the parameterization of
the Reeb orbits.

The local model of the tangent space  of $\CW^{k,p}_\delta(\dot \Sigma, Q;J;\gamma^+,\gamma^-)$ at
$w \in C^\infty_\delta(\dot \Sigma,Q) \subset W^{k,p}_\delta(\dot \Sigma, Q)$ is given by
\be\label{eq:tangentspace}
\Gamma_{s^+,s^-} \oplus W^{k,p}_\delta(w^*TQ)
\ee
where $W^{k,p}_\delta(w^*TQ)$ is the Banach space
\beastar
&{} & \{Y = (Y^\pi, \lambda(Y)\, X_\lambda)
\mid e^{\frac{\delta}{p}|\tau|}Y^\pi \in W^{k,p}(\dot\Sigma, w^*\xi), \,
\lambda(Y) \in W^{k,p}(\dot \Sigma, \R)\}\\
& \cong & W^{k,p}(\dot \Sigma, \R) \cdot X_\lambda(w) \oplus W^{k,p}(\dot\Sigma, w^*\xi).
\eeastar
Here we measure the various norms in terms of the triad metric of the triad $(Q,\lambda,J)$.
To describe the choice of $\delta > 0$, we need to recall the covariant linearization of the map $
D\Upsilon_{\lambda, T}: W^{1,2}(z^*\xi) \to L^2(z^*\xi)
$
of the map
$$
\Upsilon_{\lambda,T}: z \mapsto \dot z - T\, X_\lambda(z)
$$
for a given $T$-periodic Reeb orbit $(T,z)$. The operator has the expression
\be\label{eq:DUpsilon}
D\Upsilon_{\lambda, T} = \frac{D^\pi}{dt} - \frac{T}{2}(\CL_{X_\lambda}J) J=: A_{(T,z)}
\ee
where $\frac{D^\pi}{dt}$ is the covariant derivative
with respect to the pull-back connection $z^*\nabla^\pi$ along the Reeb orbit
$z$ and $(\CL_{X_\lambda}J) J$ is (pointwise) symmetric operator with respect to
the triad metric. (See Lemma 3.4 \cite{oh-wang1}.)
We choose $\delta> 0$ so that $0 < \delta/p < 1$ is smaller than the
spectral gap
\be\label{eq:gap}
\text{gap}(\gamma^+,\gamma^-): = \min_{i,j}
\{d_{\text H}(\text{spec}A_{(T_i,z_i)},0),\, d_{\text H}(\text{spec}A_{(T_j,z_j)},0)\}.
\ee

Now for each given $w \in \CW^{k,p}_\delta:= \CW^{k,p}_\delta(\dot \Sigma, Q;J;\gamma^+,\gamma^-)$,
we consider the Banach space
$$
\Omega^{(0,1)}_{k-1,p;\delta}(w^*\xi)
$$
the $W^{k-1,p}_\delta$-completion of $\Omega^{(0,1)}(w^*\xi)$ and form the bundle
$$
\CH^{(0,1)}_{k-1,p;\delta}(\xi) = \bigcup_{w \in \CW^{k,p}_\delta} \Omega^{(0,1)}_{k-1,p;\delta}(w^*\xi)
$$
over $\CW^{k,p}_\delta$. Then we can regard the assignment
$$
\Upsilon_1: w \mapsto \delbar^\pi w
$$
as a smooth section of the bundle $\CH^{(0,1)}_{k-1,p;\delta}(\xi) \to \CW^{k,p}_\delta$. Furthermore
the assignment
$$
\Upsilon_2: w \mapsto d(w^*\lambda \circ j)
$$
defines a smooth section of the trivial bundle
$$
\Omega^2_{k-2,p}(\Sigma) \times \CW^{k,p}_\delta \to \CW^{k,p}_\delta.
$$
We have already computed the linearization of each of these maps in the previous section.

With these preparations, the following is a corollary of exponential estimates established
in Part II \cite{oh-wang2} for the case $Q(p_i) = 0$. We hope that the relevant off-shell analytical framework
for the case $Q(p_i) \neq 0$ can be treated elsewhere.

\begin{prop}[Theorem 1.12 \cite{oh-wang2}]\label{prop:on-containedin-off}
Assume $\lambda$ is nondegenerate and $Q(p_i) = 0$.
Let $w:\dot \Sigma \to Q$ be a contact instanton and let $w^*\lambda = a_1\, d\tau + a_2\, dt$.
Suppose
\bea
\lim_{\tau \to \infty} a_{1,i} = - Q(p_i), &{}& \, \lim_{\tau \to \infty} a_{2,i} = T(p_i)\nonumber\\
\lim_{\tau \to -\infty} a_{1,j} = - Q(q_j), &{}& \, \lim_{\tau \to -\infty} a_{2,j} = T(p_j)
\eea
at each puncture $p_i$ and $q_j$.
Then $w \in \CW^{k,p}_\delta(\dot \Sigma, Q;J;\gamma^+,\gamma^-)$.
\end{prop}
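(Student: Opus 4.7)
The plan is to verify the two defining features of membership in $\CW^{k,p}_\delta(\dot \Sigma, Q;J;\gamma^+,\gamma^-)$ separately, namely (i) that $w$ converges to the prescribed Reeb orbits at each end in the sense of \eqref{eq:limatinfty}, and (ii) that the difference of $w$ from the model asymptotic map decomposes into components lying in the appropriate (weighted) Sobolev spaces. Because the interior of $\dot \Sigma$ (away from cylindrical neighborhoods of the punctures) is compact and $w$ is smooth there, all estimates are automatic there; the content is entirely concentrated on the cylindrical ends $[R_0,\infty)\times S^1$ at each $p_i$ and $(-\infty,-R_0]\times S^1$ at each $q_j$.

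First I would invoke Theorem~\ref{thm:nondegeneratelimit} together with the nondegeneracy of $\lambda$ and the assumed limits of $a_{1,\cdot}, a_{2,\cdot}$: since $Q(p_i) = 0$ by Hypothesis~\ref{hypo:exact}, the asymptotic model at $p_i$ has $a_1 \equiv 0$ and $a_2 \equiv T(p_i)$, so the massless asymptotic map of Proposition~\ref{prop:bnot0} is the parametrized Reeb orbit $t \mapsto \gamma^+_i(T(p_i)\, t)$. Nondegeneracy then upgrades the subsequential convergence to uniform convergence, and hence \eqref{eq:limatinfty} holds with the appropriate shifts $t_i, t_j \in S^1$. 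This is exactly the input one needs to set up a tubular neighborhood of each asymptotic Reeb orbit and rewrite $w$ on the end as a pair
\[
w(\tau,t) \;=\; \exp^{\lambda}_{\gamma^+_i(T_i(t+t_i))}\!\bigl(Y^\pi_i(\tau,t),\, b_i(\tau,t) X_\lambda\bigr)
\]
using the contact triad exponential map of \cite{oh-wang1}, with $Y^\pi_i$ a section of $\xi$ along the orbit and $b_i$ a function. The analogous expression is used at each negative puncture.

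Next I would feed this decomposition into Theorem~1.12 of \cite{oh-wang2} (the exponential convergence theorem of Part II). Under the charge-vanishing hypothesis the asymptotic operator along $\gamma^\pm_\bullet$ is precisely $A_{(T,z)} = \frac{D^\pi}{dt} - \frac{T}{2}(\CL_{X_\lambda}J)J$ from \eqref{eq:DUpsilon}, and the exponential estimates in Part II give a decay rate for $Y^\pi_i$ and its derivatives up to order $k$ that is controlled by any positive number strictly smaller than the spectral gap in \eqref{eq:gap}. Since $0 < \delta/p < \text{gap}(\gamma^+,\gamma^-)$ was exactly the choice made in setting up the weighted space, pointwise bounds of the form $|\nabla^j Y^\pi_i(\tau,\cdot)| \leq C e^{-\mu |\tau|}$ for some $\mu > \delta/p$ and $j \leq k$ yield $e^{\delta|\tau|/p}\,Y^\pi_i \in W^{k,p}$ on each end, while the $X_\lambda$-component $b_i$ (after accounting for the $t$-shift $t_i$ absorbed into $\overline Y_i$) lies in $W^{k,p}$ without weight because $w^*\lambda - T(p_i)\,dt$ decays exponentially as well, so in particular its primitive decays.

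The main obstacle, as I see it, is purely bookkeeping: one has to arrange the tubular coordinates, the choice of asymptotic time-shifts $t_i, t_j$, and the elongation of the Reeb-direction part $\overline Y_i, \underline Y_j \in \Gamma_{s^+,s^-}$ so that after subtracting the $\Gamma_{s^+,s^-}$ piece the remainder $(Y^\pi, \lambda(Y)X_\lambda)$ actually lies in the unweighted $W^{k,p}$ (for $\lambda(Y)$) and in the exponentially weighted space (for $Y^\pi$). Once the coordinates are normalized so that the model asymptotic map exactly corresponds to $\overline Y_i$ (resp.\ $\underline Y_j$) outside of $\tau \leq R_0$ (resp.\ $\tau \geq -R_0$), the exponential decay of $d^\pi w$ in $\xi$-directions and the fact that $\lambda(Y)$ itself decays (only) polynomially but is measurable in $L^p$ on the compactified end combine to give membership in the tangent model \eqref{eq:tangentspace}. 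Since the Banach manifold structure is defined as a smooth manifold modelled locally on this tangent space via the triad exponential map, the membership $w \in \CW^{k,p}_\delta(\dot \Sigma, Q;J;\gamma^+,\gamma^-)$ follows.
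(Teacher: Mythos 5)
Your proposal takes essentially the same route as the paper: the paper in fact offers no standalone proof of this proposition, stating only that it is ``a corollary of exponential estimates established in Part II \cite{oh-wang2}'' under the charge-vanishing hypothesis, and your argument is simply a fleshed-out version of that one-line reduction -- use the uniform convergence from Theorem~\ref{thm:nondegeneratelimit} to set up end coordinates near the asymptotic Reeb orbits, invoke the exponential decay estimates of \cite{oh-wang2}, and then check that the exponential rate beats the weight $\delta/p$ chosen below the spectral gap \eqref{eq:gap}, so that $Y^\pi$ lands in the weighted space and the Reeb component (after peeling off the $\Gamma_{s^+,s^-}$ offset) in the unweighted one.

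One small internal wobble worth fixing: you first assert that $w^*\lambda - T(p_i)\,dt$ decays exponentially, from which the primitive $b_i$ converges to a constant exponentially (and the constant is exactly what the $\R\{\overline Y_i\}$ summand of $\Gamma_{s^+,s^-}$ is designed to absorb), but you later describe $\lambda(Y)$ as decaying ``only polynomially.'' In the nondegenerate, charge-zero setting these two statements are not consistent; the exponential estimates of \cite{oh-wang2} give exponential decay of the Reeb component as well, so that line should be dropped. Also note $\overline Y_i$ absorbs the constant $X_\lambda$-offset; the $S^1$-shift $t_i$ is fixed by the choice of asymptotic marker in \eqref{eq:limatinfty}, not by $\Gamma_{s^+,s^-}$. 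Neither point affects the overall structure of the argument, which matches the paper's.
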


Now we are ready to define the moduli space of contact instantons with prescribed
asymptotic condition as the zero set
\be\label{eq:defn-MM}
\CM(\dot \Sigma, Q;J;\gamma^+,\gamma^-) = \CW^{k,p}_\delta(\dot \Sigma, Q;J;\gamma^+,\gamma^-)
\cap \Upsilon^{-1}(0)
\ee
whose definition does not depend on the choice of $k, \, p$ or $\delta$ as long as $k\geq 2, \, p>2$ and
$\delta > 0$ is sufficiently small. One can also vary $\lambda$ and $J$ and define the universal
moduli space whose detailed discussion is postponed.

In the rest of this section, we establish the Fredholm property of
the linearization map
$$
D\Upsilon_{(\lambda,T)}(w): \Omega^0_{k,p;\delta}(w^*TQ;J;\gamma^+,\gamma^-) \to
\Omega^{(0,1)}_{k-1,p;\delta}(w^*\xi) \oplus \Omega^2_{k-2,p}(\Sigma)
$$
and compute its index. Here we also denote
$$
\Omega^0_{k-2,p;\delta}(w^*TQ;J;\gamma^+,\gamma^-) =
W^{k-2,p}_\delta(w^*TQ;J;\gamma^+,\gamma^-)
$$
for the semantic reason.

For this purpose, we remark that
as long as the set of punctures is non-empty, the symplectic vector bundle
$w^*\xi \to \dot \Sigma$ is trivial. We denote by
$
\Phi: E \to \overline \Sigma \times \R^{2n}
$
and by
$$
\Phi_i^+: = \Phi|_{\del_i^+ \overline \Sigma}, \quad \Phi_j^- = \Phi|_{\del_j^- \overline \Sigma}
$$
its restrictions on the corresponding boundary components of $\del \overline \Sigma$.
Using the cylindrical structure near the punctures,
we can extend the bundle to the bundle $E \to \overline \Sigma$ where $\overline \Sigma$
is the real blow-up of the punctured Riemann surface $\dot \Sigma$.

We then consider the following set
$$
\CS: = \{A: [0,1] \to Sp(2n,\R) \mid 1 \not \in \text{spec}(A(1)), \,
A(0) = id, \, \dot A(0) A(0)^{-1} = \dot A(1) A(1)^{-1} \}
$$
of regular paths in $Sp(2n,\R)$ and denote by $\mu_{CZ}(A)$ the Conley-Zehnder index of
the paths following \cite{robbin-salamon}. Recall that for each closed Reeb orbit $\gamma$ with a fixed
trivialization of $\xi$, the covariant linearization $A_{(T,z)}$ of the Reeb flow along $\gamma$
determines an element $A_\gamma \in \CS$. We denote by $\Psi_i^+$ and $\Psi_j^-$
the corresponding paths induced from the trivializations $\Phi_i^+$ and $\Phi_j^-$ respectively.

We have the decomposition
$$
\Omega^0_{k,p;\delta}(w^*TQ;J;\gamma^+,\gamma^-) =
\Omega^0_{k,p;\delta}(w^*\xi) \oplus \Omega^0_{k,p;\delta}(\Sigma)
$$
and again the operator
$$
D\Upsilon_{(\lambda,T)}(w): \Omega^0_{k,p;\delta}(w^*TQ;J;\gamma^+,\gamma^-) \to
\Omega^{(0,1)}_{k-1,p;\delta}(w^*\xi) \oplus \Omega^2_{k-2,p;\delta}(\Sigma)
$$
can be written into the matrix
\be\label{eq:matrixDUpsilon2}
\left(\begin{matrix}\delbar^{\nabla^\pi} + T^{\pi,(0,1)}_{dw}  + B^{(0,1)}
& \frac{1}{2} \lambda(\cdot) (\CL_{X_\lambda}J)J \del^\pi w \\
d\left((\cdot) \rfloor d\lambda) \circ j\right) & -*\Delta(\lambda(\cdot))
\end{matrix}
\right)
\ee
where
\beastar
\delbar^{\nabla^\pi} + T^{\pi,(0,1)}_{dw}  + B^{(0,1)}& : & \Omega^0_{k,p;\delta}(w^*\xi;J;\gamma^+,\gamma^-) \to
\Omega^{(0,1)}_{k-1,p;\delta}(w^*\xi)\\
-* \Delta & : & \Omega^0_{k,p;\delta}(\Sigma) \to \Omega^2_{k-2,p;\delta}(\Sigma)\\
d\left((\cdot) \rfloor d\lambda) \circ j\right) &: & \Omega^0_{k,p;\delta}(w^*\xi;J;\gamma^+,\gamma^-)
 \to
\Omega^2_{k-1,p;\delta}(\Sigma) \hookrightarrow \Omega^2_{k-2,p;\delta}(\Sigma).
\eeastar

The following proposition can be derived from the arguments used by
Lockhart and McOwen \cite{lockhart-mcowen}. However before applying their
general theory, one needs to pay some preliminary
measure to handle the fact that the order of the operators $D\Upsilon(w)$ are
different depending on the direction of $\xi$ or on that of $X_\lambda$.

\begin{prop}\label{prop:fredholm} Suppose $\delta > 0$ satisfies the inequality
$$
0< \delta < \min\left\{\frac{\text{\rm gap}(\gamma^+,\gamma^-)}{p}, \frac{2\pi}{p}\right\}
$$
where $\text{\rm gap}(\gamma^+,\gamma^-)$ is the spectral gap, given in \eqref{eq:gap},
of the asymptotic operators $A_{(T_j,z_j)}$ or $A_{(T_i,z_i)}$
associated to the corresponding punctures. Then the operator
\eqref{eq:matrixDUpsilon2} is Fredholm.
\end{prop}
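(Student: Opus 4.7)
The plan is to apply the Lockhart-McOwen theory \cite{lockhart-mcowen} of elliptic operators on manifolds with cylindrical ends, after expressing $D\Upsilon(w)$ in the matrix form \eqref{eq:matrixDUpsilon2} via the decomposition $Y = Y^\pi + \lambda(Y) X_\lambda$ and reducing to a translation-invariant asymptotic model on each cylindrical end. The diagonal entries are the first-order Cauchy-Riemann-type operator $\delbar^{\nabla^\pi} + T^{\pi,(0,1)}_{dw} + B^{(0,1)}$ acting on $w^*\xi$-valued sections and the scalar Laplacian $-*\Delta$ acting on $\R$-valued functions; the off-diagonal entries mix the two components. The graded nature of the target space $\Omega^{(0,1)}_{k-1,p;\delta}(w^*\xi) \oplus \Omega^2_{k-2,p;\delta}(\Sigma)$ matches the differing orders of these diagonal entries, and it is precisely this graded structure that will allow the off-diagonal entries to be treated as relatively compact perturbations.

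By the exponential convergence of Theorem 1.12 of \cite{oh-wang2} (which applies thanks to the charge-vanishing Hypothesis \ref{hypo:exact}), the coefficients of $D\Upsilon(w)$ converge exponentially on each cylindrical end to translation-invariant limits. The asymptotic $S^1$-operator of the $\xi$-block is (up to sign and conjugation) the Jacobi-type operator $A_{(T,z)}$ from \eqref{eq:DUpsilon}, so Lockhart-McOwen yields semi-Fredholmness of this block on the weighted $W^{k-1,p;\delta}$ target provided the effective weight avoids $\operatorname{spec}(A_{(T_i,z_i)})$ and $\operatorname{spec}(A_{(T_j,z_j)})$ at all punctures; the hypothesis $\delta < \operatorname{gap}(\gamma^+,\gamma^-)/p$ is designed precisely for this. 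For the scalar Laplacian block on the $W^{k-2,p;\delta}$ target, the asymptotic model is $-\partial_\tau^2 - \partial_t^2$, whose indicial exponents on the cylinder lie in $2\pi\Z$; the hypothesis $\delta < 2\pi/p$ ensures the weight avoids these, so this block is likewise Fredholm.

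Finally, the off-diagonal entries of \eqref{eq:matrixDUpsilon2} must be shown to contribute only compact perturbations to the diagonal Fredholm operator. The entry $L_{12}(Y) = \tfrac{1}{2}\lambda(Y)(\CL_{X_\lambda}J)J(\del^\pi w)$ is zeroth order in $\lambda(Y)$ with coefficient $\del^\pi w$ decaying exponentially at each end, so multiplication by this coefficient defines a compact operator between the relevant weighted Sobolev spaces: Rellich compactness on compact subsets combines with the exponential decay of the coefficient to absorb the non-compactness at infinity. The entry $L_{21}(Y^\pi) = d((Y^\pi \rfloor d\lambda)\circ j)$ is only first order in $Y^\pi$ and therefore factors through $\Omega^2_{k-1,p;\delta}(\Sigma)$; the inclusion into $\Omega^2_{k-2,p;\delta'}(\Sigma)$ for a slightly smaller weight $\delta' < \delta$ is compact by the weighted Rellich theorem on manifolds with cylindrical ends, and the strict inequalities in the hypothesis on $\delta$ provide precisely the room for this loss of weight. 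Combining these compact perturbations with the Lockhart-McOwen semi-Fredholm estimate on the diagonal yields the a priori inequality asserted in Theorem \ref{thm:indexformula}(1), and running the parallel argument on the formal adjoint bounds the cokernel. The main subtlety lies in handling $L_{21}$: since its coefficient $d\lambda$ does not decay at the ends, its compactness cannot come from asymptotic smallness of the coefficient, and must instead exploit the gap between the order of $L_{21}$ (one derivative) and the order of the diagonal entry it couples to (two derivatives).
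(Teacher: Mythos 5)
Your overall strategy (Lockhart--McOwen on cylindrical ends, matching the graded target to the differing orders of the diagonal blocks) is sound, and you correctly spot that the crux is the treatment of the off-diagonal entries. The treatment of $L_{12}$ is right: its coefficient $\del^\pi w$ decays exponentially under Hypothesis~\ref{hypo:exact}, so $L_{12}$ is a genuine compact perturbation and can be stripped off without affecting Fredholm-ness. You also correctly single out $L_{21}$ as the subtle term, because its coefficient $d\lambda$ does not decay. But your proposed fix for $L_{21}$ does not work.

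The claim that $L_{21}$ is compact via the embedding $\Omega^2_{k-1,p;\delta}(\Sigma)\hookrightarrow\Omega^2_{k-2,p;\delta'}(\Sigma)$ for $\delta'<\delta$ is misplaced: the codomain of the operator $D\Upsilon(w)$ as stated is $\Omega^2_{k-2,p;\delta}(\Sigma)$, with the \emph{same} weight $\delta$, and the relevant inclusion $\Omega^2_{k-1,p;\delta}\hookrightarrow\Omega^2_{k-2,p;\delta}$ is \emph{not} compact on a manifold with cylindrical ends. Unlike the closed case (Proposition~\ref{prop:closed-fredholm}), where the derivative loss alone gives Rellich compactness on compact $\Sigma$, on $\dot\Sigma$ you cannot turn the one-derivative gap into compactness without sacrificing weight in the codomain, and you have no freedom to do that here without changing the operator.

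The correct way to handle $L_{21}$ is to accept that it \emph{is} part of the translation-invariant asymptotic operator at each cylindrical end and then exploit its triangular position. Concretely: once $L_{12}$ is removed as a compact perturbation, the remaining operator is a lower triangular $2\times2$ operator matrix
$$
\begin{pmatrix} L_{11} & 0 \\ L_{21} & L_{22} \end{pmatrix},
$$
with $L_{11} = \delbar^{\nabla^\pi}+T^{\pi,(0,1)}_{dw}+B^{(0,1)}$, $L_{22}=-\!*\!\Delta$, and $L_{21}$ merely bounded. If $P_{11}$ and $P_{22}$ are parametrices of the diagonal blocks (which exist by the hypothesis $\delta<\min\{\mathrm{gap}(\gamma^+,\gamma^-)/p,\,2\pi/p\}$ together with Lockhart--McOwen for $L_{11}$ and the standard cylinder analysis of $\Delta$ for $L_{22}$), then
$$
\begin{pmatrix} P_{11} & 0 \\ -P_{22}L_{21}P_{11} & P_{22} \end{pmatrix}
$$
is a parametrix of the triangular matrix, and so the matrix operator is Fredholm. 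Equivalently, at the level of indicial roots: the asymptotic operator is lower triangular because $L_{12,\infty}=0$, and the indicial set of a lower triangular system is the union of the indicial sets of the diagonal blocks; the hypothesis on $\delta$ is designed precisely to avoid that union. This is what the paper's homotopy $s\mapsto L_s$ is implicitly using, and it requires neither decay of $d\lambda$ nor any weight loss in the codomain. Either of these formulations would repair the gap in your argument.
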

\begin{proof} We first note that the operators $\delbar^{\nabla^\pi} + T^{\pi,(0,1)}_{dw}  + B^{(0,1)}$ and
$-\Delta$ are Fredholm: The relevant a priori coercive $W^{k,2}$-estimates for any integer $k \geq 1$
for the derivative $dw$ on the punctured Riemann surface $\dot \Sigma$ with cylindrical metric
near the punctures are established in \cite{oh-wang2} for the operator
$\delbar^{\nabla^\pi}  + T^{\pi,(0,1)}_{dw} + B^{(0,1)}$ and the one for $-\Delta$ is standard.
From this, the standard interpolation inequality establishes the $W^{k,p}$-estimates
for $D\Upsilon(w)$ for all $k \geq 2$ and $p \geq 2$. For readers' convenience, we
provide details in Appendix \ref{append:Fredholm}  which essentially  follow from
\cite{lockhart-mcowen}.

Secondly, it follows that the operator
\eqref{eq:matrixDUpsilon2} can be homotoped to the direct sum operator
$$
(\delbar^{\nabla^\pi} + T^{\pi,(0,1)}_{dw}  + B^{(0,1)}) \oplus (-\Delta)
$$
by considering the continuous deformation of operators
$$
s \mapsto \left(\begin{matrix}\delbar^{\nabla^\pi} + T^{\pi,(0,1)}_{dw}  + B^{(0,1)}
& \frac{s}{2} \lambda(\cdot) (\CL_{X_\lambda}J)J \del^\pi w \\
s \, d\left((\cdot) \rfloor d\lambda) \circ j\right) & -* \Delta(\lambda(\cdot))
\end{matrix}
\right)
$$
from $s =1$ to $s = 0$.
Once these two are established, the proof of the proposition is parallel to that of Proposition
\ref{prop:closed-fredholm}. See Appendix \ref{append:Fredholm}.
\end{proof}

Then by the continuous invariance of the Fredholm index, we obtain
\be\label{eq:indexDXiw}
\operatorname{Index} D\Upsilon_{(\lambda,T)}(w) =
\operatorname{Index} (\delbar^{\nabla^\pi} + T^{\pi,(0,1)}_{dw}  + B^{(0,1)}) + \operatorname{Index}(-\Delta).
\ee
Therefore it remains to compute the latter two indices. For this, we obtain

\begin{thm}\label{thm:indexforDUpsilon} We fix a trivialization
$\Phi: E \to \overline \Sigma$ and denote
by $\Psi_i^+$ (resp. $\Psi_j^-$) the induced symplectic paths associated to the trivializations
$\Phi_i^+$ (resp. $\Phi_j^-$) along the Reeb orbits $\gamma^+_i$ (resp. $\gamma^-_j$) at the punctures
$p_i$ (resp. $q_j$) respectively. Then we have
\bea
\operatorname{Index} (\delbar^{\nabla^\pi} + T^{\pi,(0,1)}_{dw}  + B^{(0,1)}) & = &
n(2-2g-s^+ - s^-) + 2c_1(w^*\xi) + (s^+ + s^-) \nonumber\\
&{}& \quad  + \sum_{i=1}^{s^+} \mu_{CZ}(\Psi^+_i)
- \sum_{j=1}^{s^-} \mu_{CZ} (\Psi^-_j)
\label{eq:Indexdelbarpi}\\
\operatorname{Index} (-\Delta)  & = & \sum_{i=1}^{s^+} m(\gamma^+_i)+ \sum_{j=1}^{s^-} m(\gamma^-_j) -g.
\label{eq:indexDelta}
\eea
In particular,
\bea\label{eq:indexforDUpsilon}
\operatorname{Index} D\Upsilon_{(\lambda,T)}(w) & = & n(2-2g-s^+ - s^-) + 2c_1(w^*\xi)\nonumber\\
&{}& \quad  + \sum_{i=1}^{s^+} \mu_{CZ}(\Psi^+_i)
- \sum_{j=1}^{s^-} \mu_{CZ}(\Psi^-_j)\nonumber \\
&{}& \quad  +
\sum_{i=1}^{s^+} (m(\gamma^+_i)+1) + \sum_{j=1}^{s^-}( m(\gamma^-_j)+1)  - g.
\eea
\end{thm}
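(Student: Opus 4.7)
The plan is to reduce the computation of $\operatorname{Index} D\Upsilon_{(\lambda,T)}(w)$ to the two summands \eqref{eq:Indexdelbarpi} and \eqref{eq:indexDelta} via the additive decomposition \eqref{eq:indexDXiw}, and then to establish each of these by identifying the asymptotic model operator at every puncture. For \eqref{eq:indexDXiw} I would use the homotopy \eqref{eq:s-homotopy} from the proof of Proposition~\ref{prop:closed-fredholm}, now in the weighted Sobolev setting on $\dot\Sigma$. The key point is that both off-diagonal entries of \eqref{eq:matrixDUpsilon2} carry $\del^\pi w$- or $d\lambda$-dependent coefficients whose $C^0$-norm decays exponentially at the punctures by the asymptotic convergence (Theorem~\ref{thm:subsequence-limit} combined with the exponential decay estimates of \cite{oh-wang2}), so the asymptotic model operator of the family $L_s$ is $s$-independent, Proposition~\ref{prop:fredholm} extends uniformly to the whole family, and homotopy invariance of the Fredholm index delivers \eqref{eq:indexDXiw}.

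For \eqref{eq:Indexdelbarpi} I would first observe that $T^{\pi,(0,1)}_{dw}$ and $B^{(0,1)}$ are zero-order compact perturbations of $\delbar^{\nabla^\pi}$ acting on sections of the complex rank-$n$ bundle $w^*\xi$ (compactness relying on the exponential decay of the coefficients at the ends), so they do not alter the Fredholm index. The asymptotic operator of $\delbar^{\nabla^\pi}$ along a closed Reeb orbit $(T,z)$ was identified in \cite{oh-wang2} with the operator $A_{(T,z)}$ of \eqref{eq:DUpsilon}. I then invoke the Riemann--Roch index theorem for Cauchy--Riemann operators on punctured Riemann surfaces with non-degenerate asymptotic ends (Schwarz, or the Hofer--Wysocki--Zehnder form as adapted in \cite{bourg-mohnke}), using the trivializations $\Phi^\pm_i,\Phi^\pm_j$ to define both $c_1(w^*\xi)$ and the Conley--Zehnder indices $\mu_{CZ}(\Psi^\pm)$, while carefully tracking the index shift induced by the exponential weight $e^{\delta|\tau|/p}$ (which sits inside the spectral gap \eqref{eq:gap} of every asymptotic operator). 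This produces \eqref{eq:Indexdelbarpi}.

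For \eqref{eq:indexDelta}, the operator $-*\Delta$ acts on $\Omega^0_{k,p;\delta}(\Sigma)$ augmented by the asymptotic constants from $\Gamma_{s^+,s^-}$; I would analyze it by Fourier decomposition in the $S^1$-coordinate on each cylindrical end. The non-zero Fourier modes are uniformly invertible on the weighted spaces for sufficiently small $\delta$, while the zero Fourier mode reduces to the ODE $-\partial_\tau^2$, whose kernel and cokernel contributions must be paired against harmonic representatives of $H^\bullet(\overline\Sigma)$ via a Hodge-theoretic duality on the real blow-up, and this is where the $-g$ term enters. The subtle point, and in my view the main obstacle, is the explicit appearance of the multiplicity $m(\gamma^\pm)$: one has to identify additional asymptotic modes at each puncture arising from the fact that $w$ winds $m$ times around an underlying simple Reeb orbit, so that the $\Z/m$-equivariance along the end enlarges the effective space of asymptotic modes contributing to either kernel or cokernel by $m-1$ per puncture. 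Combining this with the constants already carried by $\Gamma_{s^+,s^-}$ produces the full $\sum m^+ + \sum m^-$ contribution. Feeding the resulting formulas \eqref{eq:Indexdelbarpi} and \eqref{eq:indexDelta} into \eqref{eq:indexDXiw} then assembles \eqref{eq:indexforDUpsilon}, with the $(s^+ + s^-)$ term in the Cauchy--Riemann index combining naturally with the $\sum m^{\pm}$ terms from the Laplacian index into the form $\sum(m^{\pm} + 1)$ stated in the theorem.
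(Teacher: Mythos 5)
Your overall plan matches the paper's: establish \eqref{eq:indexDXiw} by homotoping away the off-diagonal entries of \eqref{eq:matrixDUpsilon2} in the weighted setting, and then compute the two summands separately. Your argument for \eqref{eq:Indexdelbarpi} — treat $T^{\pi,(0,1)}_{dw}$ and $B^{(0,1)}$ as zero-order compact perturbations, then invoke the Riemann--Roch index theorem for Cauchy--Riemann operators on punctured surfaces with nondegenerate asymptotics relative to the trivializations $\Phi^\pm$, tracking the weight-induced shift within the spectral gap \eqref{eq:gap} — is essentially what the paper does by citing the general formula in Bourgeois's thesis, with the $(s^+ + s^-)$ term attributed to the finite-dimensional factor $\Gamma_{s^+,s^-}$ of \eqref{eq:tangentspace}.

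Where you genuinely diverge and where there is a gap is \eqref{eq:indexDelta}. The paper's proof is short and algebraic: harmonic functions on $\dot\Sigma$ are identified with imaginary parts of holomorphic functions with the same orders of zeros and poles, and the index is then read off from the Dolbeault complex $0 \to \Omega^0(\Sigma;D) \to \Omega^1(\Sigma;D) \to 0$ for the divisor $D=\sum_i m(\gamma^+_i)\,p_i + \sum_j m(\gamma^-_j)\,q_j$ via the Riemann--Roch formula $\chi(D)=\deg D - g$. You instead propose a per-end Fourier decomposition combined with Hodge theory on $\overline\Sigma$ for the $-g$ term, and then appeal to a ``$\Z/m$-equivariance'' to produce the $m(\gamma^\pm)$-dependence. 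This last step does not hold up as stated: $-\Delta$ acts on scalar functions on the \emph{domain} $\dot\Sigma$ and its index depends only on the domain's metric near the ends and the exponential weight; the asymptotic operator on each cross-section $S^1$ is simply $-\partial_t^2$ with $(2\pi\Z)^2$-spectrum, and no $\Z/m$-action of the target Reeb orbit modifies this. Your analytic mode count therefore cannot see the multiplicity of the limiting Reeb orbit without some further identification (such as the paper's passage to $\Omega^\bullet(\Sigma;D)$ with pole orders prescribed by $m(\gamma^\pm)$), and your proposal leaves precisely that identification — the nontrivial content of \eqref{eq:indexDelta} — unargued.
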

\begin{proof} The formula \eqref{eq:Indexdelbarpi} can be immediately derived from
the general formula given in the top of p. 52 of Bourgeois's thesis \cite{bourgeois}:
The summand $(s^+ + s^-)$ comes from the factor $\Gamma_{s^+,s^-}$ in the decomposition
\eqref{eq:tangentspace} which has dimension $s^+ + s^-$.

So it remains to compute the index \eqref{eq:indexDelta}.
We recall that any harmonic function on $\dot \Sigma$ can be written as the
imaginary part of a holomorphic function on $\dot \Sigma$ with the same
orders of zeros and poles respectively. (The converse also holds.) Therefore
to compute the (real) index of $-\Delta$, we consider the Dolbeault complex
$$
0 \to \Omega^0(\Sigma; D) \to \Omega^1(\Sigma;D) \to 0
$$
where $D = D^+ + D^-$ is the divisor associated to the set of punctures
$$
D^+ =  \sum_{i=1}^{s^+}m(\gamma^+_i) p_i, \quad
D^- = \sum_{j=1}^{s^-} m(\gamma^-_j) q_j
$$
where $m(\gamma^+_i)$ (resp. $m(\gamma^-_j)$) is the multiplicity of the Reeb orbit
$\gamma^+_i$ (resp. $\gamma^-_j$). The standard Riemann-Roch formula then gives rise to
the formula for the Euler characteristic
\beastar
\chi(D) & = & \dim_\C H^0(D) - \dim_\C H^1(D) = \operatorname{deg} (D) - g\\
& = &\sum_{i=1}^{s^+} m(\gamma^+_i)+ \sum_{j=1}^{s^-} m(\gamma^-_j) - g.
\eeastar

This finishes the proof.

\end{proof}

\section{Generic transversality under the perturbation of $J$}
\label{sec:generic}

We start with recalling the linearization of the equation $\dot x  = X_\lambda(x)$
along a closed Reeb orbit. Let $z$ be a closed Reeb orbit of period $T > 0$. In other words,
$z: \R \to Q$ is a periodic solution of $\dot z = X_\lambda(z)$ with period $T$, thus satisfying $z(T) = z(0)$.

Denote the Reeb flow $\phi^t= \phi^t_{X_\lambda}$ of the Reeb vector field $X_\lambda$,
we can write $z(t) = \phi^t_{X_\lambda}(z(0))$.
In particular $p:= z(0)$ is a fixed point of the diffeomorphism $\phi^T$.
Further, since $L_{X_\lambda}\lambda = 0$,  the contact diffeomorphism $\phi^T$ induces the isomorphism
$$
\Psi_z : = d\phi^T(p)|_{\xi_p}: \xi_p \to \xi_p
$$
which is the tangent map of the Poincar\'e return map $\phi^T$ restricted to $\xi_p$.

\begin{defn} We say a Reeb orbit with period $T$ is \emph{nondegenerate}
if $\Psi_z:\xi_p \to \xi_p$ with $p = z(0)$ has no eigenvalue 1.
\end{defn}

Denote $\Cont(Q,\xi)$ the set of contact 1 forms with respect to the contact structure $\xi$ and $\CL(Q)=C^\infty(S^1,Q)$
the space of loops $z: S^1 = \R /\Z \to Q$. Let $\CL^{1,2}(Q)$ be the $W^{1,2}$-completion of $\CL(Q)$.
We would like to consider some Banach vector bundle $\CL$ over the Banach manifold
$(0,\infty) \times \CL^{1,2}(Q) \times \Cont(Q,\xi)$ whose fiber at $(T, z, \lambda)$ is given by $L^2(z^*TQ)$.
We consider the assignment
$$
\Upsilon: (T,z,\lambda) \mapsto \dot z - T \,X_\lambda(z)
$$
which is section of $\CL$.

Denote $D$ the covariant derivative. Then we have the following expression of the full linearization.
\begin{lem}\label{lem:full-linearization}
\beastar
d{(T,z, \lambda)}\Upsilon(a,Y, B) = \frac{D Y}{dt} - T DX_\lambda(z)(Y)-aX_\lambda- T \delta_{\lambda}X_\lambda(B),
\eeastar
where $a\in \R$, $Y\in T_z\CL^{1,2}(Q)=W^{1,2}(z^*TQ)$ and $B\in T_\lambda \Cont(Q, \xi)$ and
the last term $\delta_{\lambda}X_\lambda$ is some linear operator.
\end{lem}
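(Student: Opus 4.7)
My plan is to split the linearization into its three partial derivatives in $T$, $z$, and $\lambda$ using linearity of $d\Upsilon$, compute each one separately, and then sum them. The partial in $T$ is immediate because $\Upsilon$ depends linearly on $T$: the variation in direction $a \in \R$ contributes exactly $-a\, X_\lambda(z)$.

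For the partial derivative in $z$ with $T$ and $\lambda$ fixed, I would choose a smooth family of loops $z_s : S^1 \to Q$ with $z_0 = z$ and $\partial_s z_s|_{s=0} = Y$, and apply a covariant derivative $\nabla_s$ to $\dot z_s - T\, X_\lambda(z_s)$ at $s = 0$. Using a torsion-free connection (for example, the Levi-Civita connection of the triad metric $g$) gives $\nabla_s \partial_t z_s|_{s=0} = \nabla_t Y = \frac{DY}{dt}$, while $\nabla_s X_\lambda(z_s)|_{s=0} = \nabla_Y X_\lambda =: DX_\lambda(z)(Y)$. This produces the $\frac{DY}{dt} - T\, DX_\lambda(z)(Y)$ contribution.

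For the partial derivative in $\lambda$ with $T$ and $z$ fixed, I would take a smooth path $\lambda_s \in \Cont(Q,\xi)$ with $\lambda_0 = \lambda$ and $\partial_s \lambda_s|_{s=0} = B$, set $X_s := X_{\lambda_s}$, and define $\delta_\lambda X_\lambda(B) := \partial_s X_s|_{s=0}$. Differentiating the defining identities $\lambda_s(X_s) \equiv 1$ and $\iota_{X_s}d\lambda_s \equiv 0$ at $s=0$ yields the pair of equations
$$
B(X_\lambda) + \lambda\bigl(\delta_\lambda X_\lambda(B)\bigr) = 0, \qquad \iota_{\delta_\lambda X_\lambda(B)}\, d\lambda = -\, \iota_{X_\lambda}\, dB,
$$
which, by the non-degeneracy of $d\lambda|_\xi$, uniquely determine $\delta_\lambda X_\lambda(B)$ as a linear operator in $B$. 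Its contribution to the variation of $\Upsilon$ is $-T\, \delta_\lambda X_\lambda(B)$, and summing the three contributions yields the claimed formula.

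The only subtle point will be the torsion term that appears in $\nabla_s \partial_t z_s$ if one uses a connection with torsion (such as the contact triad connection used elsewhere in the paper); this is avoided by performing the computation with a torsion-free connection, and since the operator $z \mapsto \dot z - T\, X_\lambda(z)$ is tensorial the resulting formula is connection-independent. The $\lambda$-derivative piece is conceptually routine but relies essentially on non-degeneracy of $d\lambda|_\xi$ to solve uniquely for $\delta_\lambda X_\lambda(B)$.
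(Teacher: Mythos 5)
The paper does not actually give a proof of this lemma: immediately after the statement it says only ``By using this full linearization, one can study the generic existence of the contact one-forms which make all Reeb orbits nondegenerate.\ We refer to Appendix of \cite{wendl} for its complete proof,'' and no derivation is carried out in the text. Your proposal therefore supplies an argument the paper omits, and the argument is correct: splitting $d_{(T,z,\lambda)}\Upsilon$ into partial derivatives, computing the $T$-direction by linearity, the $z$-direction by covariantly differentiating a family $z_s$ and using $\nabla_s\partial_t z_s = \nabla_t Y$ for a torsion-free connection, and identifying $\delta_\lambda X_\lambda(B)$ by differentiating $\lambda_s(X_{\lambda_s})\equiv 1$ and $\iota_{X_{\lambda_s}}d\lambda_s\equiv 0$ (which, as you note, determines $\delta_\lambda X_\lambda(B)=cX_\lambda+V^\xi$ uniquely from $c=-B(X_\lambda)$ and $\iota_{V^\xi}d\lambda = -\iota_{X_\lambda}dB$ via the nondegeneracy of $d\lambda|_\xi$) are all in order.

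The one place where your reasoning is looser than it should be is the final ``connection-independence'' remark. The map $(T,z,\lambda)\mapsto \dot z - T X_\lambda(z)$ is a first-order differential operator, not a tensor, and its full linearization is canonically defined only at zeros of $\Upsilon$ (i.e.\ at genuine Reeb orbits); away from zeros one must fix a connection on the ambient Banach bundle, and different choices change the formula by a term proportional to $\Upsilon(T,z,\lambda)$. Your computation with the Levi-Civita connection is consistent with the way the lemma is stated, and the discrepancy disappears precisely at the Reeb orbits where the Fredholm/transversality theory evaluates it, so the conclusion holds; you should just say that explicitly rather than invoking tensoriality. With a torsion-carrying connection such as the contact triad connection the extra term $T^\nabla(Y,\dot z)$ you flag is real, and comparing with the paper's earlier formula \eqref{eq:DUpsilon} $D\Upsilon_{\lambda,T} = \frac{D^\pi}{dt} - \frac{T}{2}(\CL_{X_\lambda}J)J$ one sees that on-shell (where $\dot z = TX_\lambda(z)$) the contact triad connection's normalization $\nabla X_\lambda = \frac{1}{2}(\CL_{X_\lambda}J)J$ already accounts for this; your choice of a torsion-free connection is the cleaner route for the off-shell statement of the present lemma.
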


By using this full linearization, one can study the generic existence of the contact one-forms which make all
Reeb orbits nondegenerate. We refer to Appendix of \cite{wendl} for its complete proof.
We now assume that $\lambda$ is such a generic contact form.

Now we involve the set $\CJ(Q,\lambda)$ given in \eqref{eq:JJQlambda}.
We study the linearization of the map $\Upsilon^{univ}$ which is the map $\Upsilon$ augmented by
the argument $J \in \CJ(Q,\lambda)$. More precisely, we define
$$
\Upsilon^{univ}(j, w, J) = \left(\delbar_J^\pi w, d(w^*\lambda \circ j)\right)
$$
$\delbar$ at each $(j,w,J) \in \delbar^{-1}(0)$. In the discussion below, we will fix the complex
structure $j$ on $\Sigma$, and so suppress $j$ from the argument of $\Upsilon^{univ}$.

We denote the zero set $(\Upsilon^{univ})^{-1}(0)$ by
$$
\MM(Q,\lambda;\overline \gamma, \underline \gamma) = \left\{ (w,J)
\in \CW^{k,p}_\delta(\dot \Sigma, Q;\overline \gamma, \underline \gamma)) \times \JJ^\ell(Q,\lambda)
\, \Big|\, \Upsilon^{univ}(w, J) = 0 \right\}
$$
which we call the universal moduli space. Denote by
$$
\pi_2: \CW^{k,p}(\dot \Sigma, Q;\overline \gamma, \underline \gamma) \times \JJ^\ell(Q,\lambda) \to
\JJ^\ell(Q,\lambda)
$$
the projection. Then we have
\be\label{eq:MMK}
\MM(J;\overline \gamma, \underline \gamma)= \MM(Q,\lambda,J;\overline \gamma, \underline \gamma)
 = \pi_2^{-1}(J) \cap \MM(Q,\lambda;\overline \gamma, \underline \gamma).
\ee

One essential ingredient for the generic transversality under the perturbation of
$J \in \CJ(Q,\lambda)$ is the usage of the following unique continuation result.
We take a short cut in its proof relating the (local) contact instanton to a (local) pseudoholomorphic
curves in a (local) symplectization exploiting the well-known unique continuation result for
the pseudoholomorphic maps. Here again the closedness
condition $d(w^*\lambda \circ j)$ for the contact instanton map $w$ enters in an
essential way.

\begin{prop}[Unique continuation lemma]\label{prop:unique-conti}
Any non-constant contact Cauchy-Riemann map does not
have an accumulation point in the zero set of $dw$.
\end{prop}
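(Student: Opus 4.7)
\medskip

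\textbf{Proposal.} The plan is to reduce the statement to the standard unique continuation theorem for pseudoholomorphic maps, exploiting the hint in the excerpt: although the Cauchy--Riemann map $w$ itself is not elliptic, the augmented pair $(a,w)$ into the symplectization is, whenever the anti-derivative $a$ of $w^*\lambda\circ j$ exists. Since the issue is local around a hypothetical accumulation point of $\{dw=0\}$, local existence of such an $a$ is automatic from the contact-instanton equation.

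First, suppose $w:\dot\Sigma\to Q$ is a non-constant contact Cauchy--Riemann map and that $p_0\in\dot\Sigma$ is an accumulation point of the zero set of $dw$. Pick a simply connected neighborhood $U$ of $p_0$. Because $w$ satisfies $d(w^*\lambda\circ j)=0$, the one-form $w^*\lambda\circ j$ is closed on $U$ and hence there exists $f\in C^\infty(U)$ with $df=w^*\lambda\circ j$. Form $\widetilde w:=(f,w):U\to \R\times Q$ and equip $\R\times Q$ with the cylindrical almost complex structure $\widetilde J=J_0\oplus J$ used throughout the paper. A direct computation using the splitting $T(\R\times Q)=\R\{\partial_s\}\oplus\R\{X_\lambda\}\oplus\xi$ (this is exactly the calculation of Hofer recalled around \eqref{eq:tildeJ-holo}) shows that $\widetilde w$ is $\widetilde J$-holomorphic on $U$, since the two defining conditions are $\delbar^\pi w=0$ and $df=w^*\lambda\circ j$, both of which hold by construction.

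Second, at any zero $z$ of $dw$ one has $d^\pi w(z)=0$ and $w^*\lambda(z)=0$; consequently $(w^*\lambda\circ j)(z)=0$, so $df(z)=0$. Therefore $d\widetilde w(z)=0$ as well, and the zero set of $d\widetilde w$ on $U$ contains that of $dw$. In particular $p_0$ is an accumulation point of $\{d\widetilde w=0\}$. By the standard critical-point / unique continuation theorem for $\widetilde J$-holomorphic maps (McDuff--Salamon), critical points of a non-constant pseudoholomorphic map are isolated; hence $\widetilde w$ must be constant on $U$. Projecting to $Q$ yields $w\equiv \operatorname{const}$ on $U$.

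Third, I extend this local conclusion globally. The set
\[
Z:=\{p\in\dot\Sigma \mid w \text{ is locally constant near } p\}
\]
is open by definition, and by the previous paragraph it contains every accumulation point of $\{dw=0\}$. The complement of the interior of $Z$, on which $w$ is therefore non-constant, cannot contain any accumulation point of $\{dw=0\}$ either, since otherwise the above argument applied at such a point would again produce a neighborhood on which $w$ is constant, contradicting the choice. Because $\dot\Sigma$ is connected and $w$ is assumed non-constant, $Z$ must be empty, so $\{dw=0\}$ has no accumulation point at all.

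The main obstacle is purely conceptual rather than technical: one must confirm that, although contact Cauchy--Riemann maps are not themselves solutions of an elliptic system on $Q$, the closedness condition $d(w^*\lambda\circ j)=0$ supplies exactly the missing piece to promote $w$ locally to a genuinely $\widetilde J$-holomorphic map into the symplectization, where classical unique continuation applies. Once this local promotion is in place, the rest is bookkeeping with open/closed sets on the connected surface $\dot\Sigma$.
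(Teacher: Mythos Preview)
Your proposal is correct and follows essentially the same route as the paper's own proof: locally integrate the closed form $w^*\lambda\circ j$ to a function, lift $w$ to a $\widetilde J$-holomorphic map into the symplectization, observe that zeros of $dw$ become critical points of the lift, and invoke the classical unique continuation theorem for pseudoholomorphic maps. Your third paragraph, which propagates the local constancy to all of $\dot\Sigma$ via an open--closed argument on the connected domain, is a detail the paper leaves implicit; it is a welcome clarification but not a genuinely different idea.
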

\begin{proof} Suppose to the contrary that there exists a
point $z_0 \in \Sigma$ and a sequence $z  \to z_0$ such that
$dw(z ) = 0$ for all $i$.
Since $w^*\lambda \circ j$ is closed on $\Sigma$, it can be
written as $w^*\lambda \circ j = da$ on a neighborhood
of $z_0$ for some locally defined function $a$.
Then the pair $(w,a)$ defines a pseudo-holomorphic map to
$Q \times \R$. From the equation $w^*\lambda \circ j = da$,
we also have $da(z ) = 0$ too. This implies $z $ are
critical points of the pseudoholomorphic map $(w,a)$ with
$z_0$ as an accumulation point of $z $ which are critical points of
$(w,a)$. Then the unique continuation lemma applied to $(w,a)$
implies $(w,a) \equiv const$ and so $w$ must be constant, a
contradiction to the hypothesis. This finishes the proof.
\end{proof}

The following theorem summarizes the main transversality scheme
needed for the study of the moduli problem of contact instanton map, whose proof
is not very different from that of pseudo-holomorphic curves, once the above
unique continuation result is established, and so omitted.

\begin{thm}\label{thm:trans} Let $0 < \ell < k -\frac{2}{p}$.
Consider the moduli space $\MM(Q,\lambda;\overline \gamma, \underline \gamma)$. Then
\begin{enumerate}
\item $\MM(Q,\lambda;\overline \gamma, \underline \gamma)$ is
an infinite dimensional $C^\ell$ Banach manifold.
\item The projection $\Pi_\alpha =
\pi_2|_{ \MM(Q,\lambda,J;\overline \gamma, \underline \gamma)}:
\MM(Q,\lambda,J;\overline \gamma, \underline \gamma)) \to \JJ^\ell(Q,\lambda)$ is a
Fredholm map and its index is the same as that of $D\Upsilon(w)$
for a (and so any) $w \in  \MM(Q,\lambda,J;\overline \gamma, \underline \gamma)$.
\end{enumerate}
\end{thm}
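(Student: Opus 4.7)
The plan is to establish (1) by showing the universal section $\Upsilon^{univ}$ is transverse to the zero section at every $(w,J) \in \MM(Q,\lambda;\overline\gamma,\underline\gamma)$, so the implicit function theorem gives a $C^\ell$ Banach manifold structure. Then (2) follows immediately from the Sard–Smale theorem applied to the smooth Fredholm projection $\pi_2$, whose vertical tangent space at $(w,J)$ coincides with $\ker D\Upsilon(w)$ and whose cokernel equals $\coker D\Upsilon(w)$; in particular the index of $\pi_2$ equals that of $D\Upsilon(w)$, which was computed in Theorem \ref{thm:indexforDUpsilon}.

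The core analytic step is thus the surjectivity of the full linearization
$$
D\Upsilon^{univ}(w,J)(Y,B) \;=\; D\Upsilon(w)(Y) \;+\; \bigl(D_J\Upsilon_1(w,J)(B),\,0\bigr),
$$
where the second component is unaffected by $B$ since $\Upsilon_2(w) = d(w^*\lambda\circ j)$ does not involve $J$. A short calculation based on the expression $\delbar^\pi w = \tfrac{1}{2}(\pi dw + J\cdot\pi dw\cdot j)$ gives $D_J\Upsilon_1(w,J)(B) = \tfrac{1}{2}(B\cdot \pi dw\cdot j)^{(0,1)}$ for any $B \in T_J\CJ^\ell(Q,\lambda)$, where $B$ is an endomorphism of $\xi$ anti-commuting with $J$ and symmetric with respect to $d\lambda(\cdot,J\cdot)$.

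Suppose $(\eta_1,\eta_2)$ lies in the $L^2$-cokernel of $D\Upsilon^{univ}(w,J)$. Pairing against pairs of the form $(0,B)$ with $B$ compactly supported in a small disc $U \subset \dot\Sigma$ on which $d^\pi w$ is nowhere zero — such $U$ exist by the unique continuation statement in Proposition \ref{prop:unique-conti} — and arguing as in the standard transversality proof for pseudoholomorphic curves, one may choose $B$ to produce any prescribed value of $(B\cdot\pi dw\cdot j)^{(0,1)}$ at a chosen interior point of $U$. This forces $\eta_1 \equiv 0$ on $U$, and then on all of $\dot\Sigma$ by the Aronszajn-type unique continuation for the formal $L^2$-adjoint of the Cauchy-Riemann type operator $\delbar^{\nabla^\pi} + T^{\pi,(0,1)}_{dw} + B^{(0,1)}$ appearing in Theorem \ref{thm:linearization}. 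With $\eta_1 = 0$ in hand, pairing against $(Y,0)$ with $Y = f X_\lambda$ for arbitrary $f \in C^\infty_c(\dot\Sigma)$ gives, by formula \eqref{eq:Dwddot}, the condition $\int_{\dot\Sigma} \eta_2 \cdot (-\Delta f)\,dA = 0$, so $\eta_2$ is weakly harmonic; its exponential decay at the punctures (built into the weighted target space) then forces $\eta_2 \equiv 0$.

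The main obstacle is really the mixed-order nature of $\Upsilon$: the Cauchy–Riemann type component $\Upsilon_1$ is first-order while $\Upsilon_2$ is second-order, and the coupling term $\tfrac{1}{2}\lambda(Y)(\CL_{X_\lambda}J)J\del^\pi w$ in \eqref{eq:Dwdelbarpi} mixes the two. However, because only $\Upsilon_1$ depends on $J$, the $J$-variation is used solely to kill $\eta_1$, and the $\xi$–$X_\lambda$ decoupling in $D\Upsilon_2$ (whose $X_\lambda$-part is a pure Laplacian on $\dot\Sigma$) reduces the second step to an elliptic unique continuation problem on the surface. Once the surjectivity is established, both assertions of the theorem follow by standard arguments parallel to those in the pseudoholomorphic curve setting, so I would omit those routine verifications.
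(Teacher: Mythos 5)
The paper itself does not prove this theorem: it is stated at the end of Section 12 with the remark that the proof "is not very different from that of pseudo-holomorphic curves, once the above unique continuation result is established, and so omitted," with Proposition~\ref{prop:unique-conti} playing the role of the crucial input. Your proposal correctly identifies this scheme (transversality of $\Upsilon^{univ}$, then implicit function theorem and Sard--Smale), and your formula for $D_J\Upsilon_1$ and the observation that the $J$-variation does not touch $\Upsilon_2$ are both correct. The difficulty is precisely the one you locate in your last paragraph, but your treatment of it has two genuine gaps.

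First, the claim that cokernel elements enjoy ``exponential decay built into the weighted target space'' is backwards. A cokernel element $\eta_2$ is a functional on the codomain $\Omega^2_{k-2,p;\delta}(\Sigma)$, hence lives in the dual space, which allows growth at rate $e^{\delta|\tau|/p}$ on the cylindrical ends rather than requiring decay. In particular, the constant $0$-form is harmonic, has bounded growth, and annihilates the full image of $D\Upsilon_2$ (both $-\Delta f\,dA$ and $d((Y^\pi\rfloor d\lambda)\circ j)$ integrate to zero against a constant by Stokes). So ``weakly harmonic plus decay'' does not exclude constants, and some further argument is required. One route that fits the paper's framework is to use Theorem~\ref{thm:indexforDUpsilon}: the cokernel of $-\Delta$ is identified there with $H^1(\Sigma,\mathcal O(D))$, which indeed vanishes for genus zero since $H^1(\mathbb P^1,\mathcal O(D)) = 0$ for $\deg D \geq -1$; but this identification and vanishing have to be invoked explicitly, not derived from a decay statement.

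Second, the step ``$\eta_1 \equiv 0$ on all of $\dot\Sigma$ by Aronszajn-type unique continuation for the formal adjoint of $\delbar^{\nabla^\pi} + T^{\pi,(0,1)}_{dw} + B^{(0,1)}$'' is not immediately valid, because the adjoint equation satisfied by $\eta_1$ is not the pure $\delbar^*$-equation: from \eqref{eq:Dwddot}, pairing against $Y = Y^\pi$ produces the coupled condition
\begin{equation*}
\bigl\langle (\delbar^{\nabla^\pi} + T^{\pi,(0,1)}_{dw} + B^{(0,1)})Y^\pi,\,\eta_1 \bigr\rangle
+ \bigl\langle d\bigl((Y^\pi\rfloor d\lambda)\circ j\bigr),\, \eta_2 \bigr\rangle = 0,
\end{equation*}
so $\eta_1$ satisfies an elliptic equation with an inhomogeneous $\eta_2$-dependent source, not a homogeneous one to which Aronszajn applies directly. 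You can sidestep this entirely: Proposition~\ref{prop:unique-conti} gives that $\{d^\pi w \neq 0\}$ is open and \emph{dense}, and your $J$-perturbation argument kills $\eta_1$ at \emph{every} such point, so $\eta_1 \equiv 0$ follows by density and continuity without invoking unique continuation for the adjoint. With $\eta_1 \equiv 0$ established this way, the residual conditions on $\eta_2$ become $\Delta\eta_2 = 0$ and $d\eta_2 = 0$ (the latter from the $d((Y^\pi\rfloor d\lambda)\circ j)$ term and nondegeneracy of $d\lambda|_\xi$ at points with $d^\pi w \neq 0$), forcing $\eta_2$ to be constant and returning you to the first gap above.
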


One should compare this with the corresponding statement for Floer's perturbed Cauchy-Riemann equations
on symplectic manifolds.

\appendix

\section{Proof of energy bound for the case of proper potential}
\label{sec:appendix}

In this appendix, we give the proof of Proposition \ref{prop:proper-energy}.

Since $f$ is assumed to be proper, $f(r) = \pm \infty$
for each puncture $r_\ell$ of $\dot \Sigma$ depending on whether the puncture is positive or
negative.

The proof is entirely similar to the proof of Lemma 5.15 \cite{behwz} verbatim
with replacement of $a$ and the equation $dw^*\lambda \circ j = da$ therein by $f$
and the equation
$$
dw^*\lambda \circ j + \sum_{e \in E(T)} Q(w;e)\, dt_e = df
$$
respectively in our current context. (\emph{We would also like point out that \cite{behwz}
used the letter `$f$' for the map $w$ while our notation $f$ is for the contact instanton potential function
 which corresponds to $a$ in their notation}. This should not confuse the readers, hopefully.)

In a neighborhood $D_\delta(p) \subset \C$ of a given puncture $p$ with analytic coordinate
$z$ centered at $p$ and $C_\delta(p) = \del D_\delta(p)$, with oriented positively
for a positive puncture, and negatively for a
negative puncture. Consider the function
$$
\delta \mapsto \int_{C_\delta(p)} w^*\lambda.
$$
It is increasing and bounded above (resp. decreasing and bounded below), if the puncture is
positive (resp. negative), since $d\lambda \geq 0$ on any contact Cauchy-Riemann map $w$ and
$\int_{D_\delta(p)} dw^*\lambda \leq E^\pi(w) < \infty$. Therefore the integral
$$
\int_{C_\delta(p)} w^*\lambda
$$
has a finite limit as $\delta \to 0$ for all punctures. Now let $\varphi \in \CC$ and let $\varphi_n \in \CC$ such that
$\|\varphi - \varphi_n\|_{C^0} \to 0$ and
$\varphi_n\circ f = 0$ on $D_{\frac{1}{n}}(p)$ for all punctures $p$. Such function exists by
the assumption on properness of potential function $f$. Moreover we can choose $\varphi_n$ so that
$$
\int_{\dot\Sigma}(\varphi_n \circ f)\, df \wedge w^*\lambda = \int_{\dot \Sigma} w^*d(\psi_n w^*\lambda)
- \int_{\dot \Sigma} (\psi_n \circ f) w^*d\lambda,
$$
where $\psi_n(s) = \int_{-\infty}^s \varphi_n(\sigma)\, d\sigma$. Notice that $\psi_n\circ f = 1$ in
$D_{\frac{1}{n}}(p)$ when $p$ is a positive puncture and $\psi_n \circ f = 0$ therein when $p$ is negative.
By Stokes' theorem,
$$
\int_{\dot \Sigma} w^*d(\psi_n \lambda) = \lim_{\delta \to 0} \sum_{\ell^+}
\int_{\del_{\ell^+} \Sigma(\delta)} w^*\lambda
$$
where the sum is taken over all positive punctures $p_{\ell^+}$. Therefore
\beastar
\int_{\dot \Sigma} (\varphi_n \circ f)\, df \wedge w^*\lambda & = & \lim_{\delta \to 0}
\sum_{\ell^+} \int_{\del_{\ell^+} \Sigma(\delta)} w^*\lambda - \int_{\dot \Sigma}(\psi_n \circ f)\, w^*d\lambda \\
& \leq & \lim_{\delta \to 0} \sum_{\ell^+}\int_{\del_{\ell^+}D_\delta(p)} w^*\lambda < C < \infty.
\eeastar
Moreover
$$
\int_{\dot \Sigma} (\varphi_n \circ f)\, df \wedge w^*\lambda \rightarrow \int_{\dot \Sigma}
(\varphi\circ f)\, df \wedge w^*\lambda
$$
as $n \to \infty$, which implies
$$
\int_{\dot \Sigma} (\varphi \circ f)\, df  \leq C,
$$
and so $E(w) \leq E^\pi(w) + C < \infty$. This finishes the proof.

\section{Details of the proof of Corollary \ref{cor:C^1oncylinder}}
\label{append:C^1oncylinder}

Suppose to the contrary that $|dw|_{C^0} = \infty$ and let $z_\alpha$ be a blowing-up
sequence. We denote $R_\alpha = |dw(z_\alpha)| \to \infty$. Then by applying Lemma \ref{lem:Hoferslemma},
we can choose another such sequence $z_\alpha'$ and $\epsilon_\alpha \to 0$ such that
$$
|dw(z_\alpha')| \to \infty, \quad \max_{z \in D_{\epsilon_\alpha(z_\alpha')}}|dw(z)|
\leq 2 R_\alpha,
\quad \epsilon_\alpha R_\alpha \to 0.
$$
We consider the re-scaling maps $\widetilde w_\alpha: D^2_{\epsilon_\alpha R_\alpha}(0)
\to Q$ defined by
$$
w_\alpha(z) = w \left(z_\alpha' + \frac{z}{R_\alpha}\right)
$$
where we may identify $D_{\epsilon_\alpha(z_\alpha')}$ as a subset of $\R \times S^1$
for all sufficiently large $\alpha$ since $\epsilon_\alpha \to 0$ as $\alpha \to \infty$.
By the exactly same argument as that of the proof of Theorem \ref{thm:C1bound},
we obtain a contact instanton $w_\infty : \C \to Q$ satisfying
$$
\begin{cases}
E^\pi(w_\infty) = 0, \, d(w_\infty^*\lambda \circ j) = 0, \quad E(w_\infty) < \infty\\
 |dw|_{C^0} \leq 2 < \infty, \, |dw_\infty(0)| = 1.
 \end{cases}
$$
Then the first line of this equation implies that $w_\infty$ is a constant map
by Proposition \ref{prop:C^1}, which obviously contradicts to the equation
$|dw_\infty(0)| = 1$ in the second line.
This finishes the proof of $|dw|_{C^0} < \infty$ and hence the proof.

\section{Details of the Fredholmness proof in Proposition \ref{prop:fredholm}}
\label{append:Fredholm}

We will prove the uniform Fredholm property of the one-parameter family of
operators $L_s$ for  $s \in [0,1]$ as in the proof of Proposition \ref{prop:closed-fredholm}.

Again it is enough to establish that there exists a family of compact operators
$$
K_s: \Omega^0_{k,p;\delta}(w^*TQ)
\to \Omega^{(0,1)}_{k-1,p;\delta}(w^*\xi) \oplus \Omega^2_{k-2,p;\delta}(\Sigma)
$$
such that the inequality
\bea\label{eq:Ls-delta}
\|Y\|_{k,p;\delta} & \leq & C (\|\pi_1(L_s(Y))\|_{k-1,p;\delta}+\|\pi_1(K_s(Y))\|_{k-1,p;\delta})\nonumber\\
&{}& \quad +\|\pi_2(L_s(Y))\|_{k-2,p;\delta}+\|\pi_2(K_s(Y))\|_{k-2,p;\delta})
\eea
holds for all $Y \in \Omega_{k,p;\delta}(w^*TQ)$ for a constant $C> 0$
independent of $s \in [0,1]$ and $w$.

In the discussion henceforth, the constant $C> 0$ may vary but can be always chosen
uniformly which is independent of $s$ and $w$.

We decompose $Y = Y^\pi + \lambda(Y)\, X_\lambda$ as before and recall
the formulae
\beastar
\pi_1(L_s(Y)) &= &(\delbar^{\nabla^\pi} + T_{dw}^{\pi,(0,1)} + B^{(0,1)})(Y^\pi)
+ \frac{s}{2}\lambda(Y)(\CL_XJ)J (\pi dw)^{(1,0)}\\
\pi_2(L_s(Y)) & = & s\, d(Y \rfloor d\lambda)\circ j - \Delta (\lambda(Y))\,dA
\eeastar
from the proof of Proposition \ref{prop:closed-fredholm}.

At this point, we briefly recall the general  a priori estimates for the elliptic operators
in the setting of manifolds with cylindrical ends laid out in
\cite[Section I-1 \& II-8]{lockhart-mcowen}. Let $X$ be a noncompact manifold
possibly with multiple ends. Let
$$
E = \oplus_{j=1}^J E_j, \quad F = \oplus_{i=1}^I F_i.
$$
Let $t = (t_1,\ldots, t_J)$ and $s = (s_1, \ldots, s_I)$ be sets of nonnegative integers and
define
$$
W_{p,t;\delta}(E) = \oplus_{j=1}^J W_{p,t_j;\delta}(E_j), \quad
W_{p,s;\delta}(E) = \oplus_{i=1}^I W_{p,s_ji\delta}(F_i).
$$
A differential operator $A: C_0^\infty(E) \to C_0^\infty(F)$ decomposes into
$$
A_{ij}: C_0^\infty(E_j) \to C_0^\infty(F_i).
$$
If each $A_{ij}$ is of order $t_j = s_i$ (where $t_j - s_i < 0$ implies $A_{ij} = 0$),
then $(t,s)$ is called a \emph{system of orders for $A$}. WLOG, we may assume
that each $t_j > 0$. Assuming that $A$ is translation invariant in the cylindrical ends, we
find that
\be\label{eq:A}
A: W_{p,t;\delta}(E) \to W_{p,s;\delta}(F)
\ee
is a bounded operator.  Then we have

\begin{thm}[Theorem 1.1 \cite{lockhart-mcowen}]\label{thm:lockhart-mcowen}
If $A$ is elliptic with respect to
$(t,s)$ and it is translation invariant on the cylindrical ends, then there is a discrete
set $\CD_A \subset \R$ such that the operator \eqref{eq:A} is Fredholm if and only if
$\delta \in \R \subset \CD_A$.
\end{thm}

\begin{rem}
In the case of current interest, we take $X = \dot \Sigma$ equipped with
the K\"ahler metric $h$ of the Riemann surface $(\dot \Sigma, j)$ that is
cylindrical near punctures, and
$$
E = w^*TQ, \quad F = \Lambda^{(0,1)}_J(w^*\xi) \oplus \Lambda^2(\dot \Sigma).
$$
\end{rem}
Therefore we can apply this theorem directly to $D\Upsilon_{(\lambda, T)}(w)$
to get the relevant Fredholm property for our problem. Alternatively we may more intuitively
apply the theorem only for the cases $J = 1 = I$ separately to each of
the two diagonal components of $D\Upsilon_{(\lambda, T)}(w)$ which is
$$
\left(\begin{matrix}\delbar^{\nabla^\pi} + T^{\pi,(0,1)}_{dw}  + B^{(0,1)}
& 0 \\
0 & -* \Delta(\lambda(\cdot))
\end{matrix}
\right).
$$
 Then by the ellipticities of
$$
\delbar^{\nabla^\pi}  + T_{dw}^{\pi,(0,1)} + B^{(0,1)}:
\Omega^0(w^*\xi) \to \Omega^{(0,1)}(w^*\xi)
$$
and of
$$
\Delta: \Omega^0(\Sigma) \to \Omega^0(\Sigma).
$$
Theorem \ref{thm:lockhart-mcowen} applied to these two cases  separately imply
\be\label{eq:|Ypi|-delta}
\|Y^\pi\|_{k,p;\delta} \leq
C (\|(\delbar^{\nabla^\pi}  + T_{dw}^{\pi,(0,1)}+ B^{(0,1)})(Y^\pi)\|_{k-1,p;\delta}
+ \|Y^\pi\|_{k-1,p;\delta})
\ee
and
\be\label{eq:|lambdaY|-delta}
\|\lambda(Y)\|_{k,p;\delta} \leq C(\|\Delta(\lambda(Y))\|_{k-2,p;\delta} + \|\lambda(Y)\|_{k-2,p;\delta}).
\ee
Now the rest of the argument is exactly the same as that of Proposition \ref{prop:closed-fredholm}
with the Sobolev space $W_{k,p}$ and etc replaced by the weighted ones $W_{k,p;\delta}$.
This finishes the proof of uniform Fredholmless of the family $L_s$ with $s \in [0,1]$ of
operators. We note that the case at $s = 1$ is nothing but the linearized operator at $w$.

\end{document}